\DeclareRobustCommand{\SkipTocEntry}[5]{}
\numberwithin{equation}{section}
\newtheorem{theorem}{Theorem}[section]
\newtheorem{lemma}[theorem]{Lemma}    
\newtheorem{corollary}[theorem]{Corollary}
\newtheorem{proposition}[theorem]{Proposition}
\newtheorem*{lemma*}{Lemma}
\theoremstyle{remark}
\theoremstyle{definition}
\newtheorem*{definition}{Definition}
\newtheorem*{remark}{Remark}
\theoremstyle{claim}
\newtheorem*{question*}{Question}
\newcommand{\vx}{\mathbf{x}}
\newcommand{\vy}{\mathbf{y}}
\newcommand{\va}{\mathbf{a}}
\newcommand{\rB}{\mathrm{B}}
\newcommand{\rR}{\mathrm{R}}
\newcommand{\rW}{\mathrm{W}}
\newcommand{\cB}{\mathcal{B}}
\newcommand{\N}{\mathbb{N}}
\newcommand{\Z}{\mathbb{Z}}
\newcommand{\Q}{\mathbb{Q}}
\newcommand{\R}{\mathbb{R}}
\newcommand{\C}{\mathbb{C}}
\newcommand{\E}{\mathbb{E}}
\DeclareMathOperator*{\BigE}{\mathbb{E}}
\newcommand{\eps}{\varepsilon}
\newcommand{\ldn}{\underline{d}}
\newcommand{\udn}{\bar{d}}
\newcommand{\spt}{\mathcal{E}}
\newcommand{\Tow}{\mathrm{tow}}
\let\@@pmod\pmod
\DeclareRobustCommand{\pmod}{\@ifstar\@pmods\@@pmod}
\def\@pmods#1{\mkern4mu({\operator@font mod}\mkern 6mu#1)}
\begin{document}

	\title[Partition Regularity and Multiplicatively Syndetic Sets]{Partition Regularity and Multiplicatively Syndetic Sets\\}
	\author{Jonathan Chapman}
	\address{Department of Mathematics\\ University of Manchester\\ Oxford Road\\ Manchester\\ M13 9PL\\ UK}
	\email{jonathan.chapman@manchester.ac.uk}
	\date{\today}
	
\begin{abstract}
	We show how multiplicatively syndetic sets can be used in the study of partition regularity of dilation invariant systems of polynomial equations. In particular, we prove that a dilation invariant system of polynomial equations is partition regular if and only if it has a solution inside every multiplicatively syndetic set. We also adapt the methods of Green-Tao and Chow-Lindqvist-Prendiville to develop a syndetic version of Roth's density increment strategy. This argument is then used to obtain bounds on the Rado numbers of configurations of the form $\{x,d,x+d,x+2d\}$.
\end{abstract}

\maketitle

\setcounter{tocdepth}{1}
%toc depth controls what appears on toc;
%0: Chapters only;
%1: Chapters and sections; etc.
\tableofcontents

\section{Introduction}\label{secIntro}

A system of equations is called \emph{partition regular} if, in any finite colouring of the positive integers $\N=C_{1}\cup\cdots\cup C_{r}$, there exists a non-trivial monochromatic solution $\vx=(x_{1},\dots,x_{s})$, meaning that $\vx\in C_{k}^{s}$ for some $k$, and $x_{i}\neq x_{j}$ for some $i\neq j$. The foundational results in the study of partition regularity are the theorems of Schur \cite{Sch16} and van der Waerden \cite{Wae27}. Schur's theorem states that the equation $x+y=z$ is partition regular, whilst van der Waerden's theorem shows that any finite colouring of $\N$ yields arbitrarily long monochromatic (non-trivial) arithmetic progressions.

The theorems of Schur and van der Waerden are both examples of partition regularity being exhibited by certain linear systems of equations. In particular, these systems are \emph{dilation invariant}, meaning that if $\vx=(x_{1},\dots,x_{s})$ is a solution, then so is $\lambda\vx=(\lambda x_{1},\dots,\lambda x_{s})$ for any $\lambda\in\Q$. In this paper we study the properties of general dilation invariant systems of equations, not just those which are linear. We show that the regularity of such systems is inexorably connected with a special class of sets known as \emph{multiplicatively syndetic sets}.

\subsection{Syndeticity}

Syndetic sets originate from the study of topological dynamics of semigroups (see \cite{SyndNotes,HS12}). Given a semigroup $(G,\cdot)$, a set $S\subseteq G$ is called \emph{(left)-syndetic} if there exists a finite set $F\subseteq G$ such that, for each $g\in G$, we have $S\cap(g\cdot F)\neq\emptyset$. Here $g\cdot F:=\{gt:t\in F\}$.

The most familiar notion of syndeticity arises in the additive setting where $(G,\cdot)=(\N,+)$. In this case a syndetic subset $S$ is called \emph{additively syndetic} and is just an infinite set with `bounded gaps'. That is, $S$ is additively syndetic if and only if $S=\{a_{1},a_{2},\dots \}$ for some infinite sequence $a_{1}<a_{2}<\dots$ such that the gaps $|a_{n+1}-a_{n}|$ are uniformly bounded.

In this paper, we study syndetic sets in the multiplicative semigroup $(\N,\cdot)$.

\begin{definition}[Multiplicatively syndetic set]
	Let $F\subset\N$ be a non-empty finite set. We say that $S\subseteq\N$ is a \emph{multiplicatively $F$-syndetic set} if, for every $a\in\N$, we have $S\cap(a\cdot F)\neq\emptyset$.
\end{definition}
\noindent Multiplicatively syndetic sets possess a number of interesting properties. Graham, Spencer, and Witsenhausen \cite{GrahamSyndy} observed that multiplicatively syndetic sets have positive density. Much later, Bergelson \cite[Lemma 5.11]{BerCent} used methods from ultrafilter theory to show that multiplicatively syndetic sets are \emph{additively central}\footnote{A subset of $\N$ is called \emph{additively central} if it is a member of
a minimal idempotent ultrafilter on $(\N,+)$ (see \cite[Definition 5.8]{BerCent}).} (which implies that they have positive density).

The fact that multiplicatively syndetic sets have positive density plays a significant role in the work of Chow, Lindqvist and Prendiville \cite{CLP}. They demonstrate how multiplicatively syndetic sets can be used to obtain partition regularity results for non-linear equations via an ``induction on colours'' argument. Their work shows that a sufficient condition for a dilation invariant equation to be partition regular is that it has a solution inside\footnote{A system $\spt$ is said to have a solution in a set $S$ if there exists a solution $\vx$ to $\spt$ with each entry of $\vx$ lying in $S$.} every multiplicatively syndetic set. Our first main theorem is a converse of this result.

\begin{theorem}[Partition regularity is equivalent to syndetic solubility]\label{thmHomPR}
	Let $\spt$ be a dilation invariant finite system of equations. Then $\spt$ is partition regular if and only if $\spt$ has a non-trivial solution inside every multiplicatively syndetic set.
\end{theorem}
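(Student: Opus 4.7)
The reverse implication ($\Leftarrow$)---that syndetic solubility implies partition regularity---is the induction-on-colours result of Chow, Lindqvist and Prendiville already recalled in the introduction. The content therefore lies in the forward direction. My plan is to use multiplicative syndeticity to construct a canonical finite colouring of $\N$, apply partition regularity to extract a monochromatic solution, and then use dilation invariance to transport this solution into the syndetic set.

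In detail, suppose $\spt$ is partition regular and let $S\subseteq\N$ be multiplicatively $F$-syndetic for some finite set $F=\{f_1,\dots,f_r\}\subset\N$. For every $a\in\N$, syndeticity provides at least one $f\in F$ with $af\in S$; make an arbitrary such choice and denote it $\phi(a)\in F$. The level sets $C_f:=\phi^{-1}(\{f\})$ partition $\N$ into the finitely many colour classes $\{C_f\}_{f\in F}$. By partition regularity, $\spt$ admits a non-trivial monochromatic solution $\vx=(x_1,\dots,x_s)\in C_f^s$ for some $f\in F$. Dilation invariance (applied with $\lambda=f\in\Q$) then ensures that $f\vx=(fx_1,\dots,fx_s)$ is also a solution of $\spt$; each entry $fx_i=\phi(x_i)\,x_i$ lies in $S$ by the definition of $C_f$; and $fx_i\neq fx_j$ whenever $x_i\neq x_j$ since $f\neq 0$, so non-triviality is preserved. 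Thus $f\vx$ is a non-trivial solution to $\spt$ lying entirely in $S$, as required.

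There is no real obstacle in this direction: the key observation is that multiplicative syndeticity of $S$ naturally encodes a finite colouring whose colour classes are precisely preimages of $S$ under fixed dilations from $F$, after which partition regularity and dilation invariance combine directly. The only point worth checking is that the definition of dilation invariance admits positive integer scalars, which holds because $\N\subset\Q$.
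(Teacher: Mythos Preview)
Your proof is correct and follows essentially the same approach as the paper: for the forward direction, the paper also uses the encoding function $\tau_{S;F}(n):=\min\{t\in F:nt\in S\}$ (your $\phi$) to define a finite colouring of $\N$, applies partition regularity to obtain a monochromatic solution, and then dilates into $S$ (this is the $r=1$ case of Proposition~\ref{propSSPR}); the reverse direction likewise appeals to the induction-on-colours argument (Lemma~\ref{lemColInd}).
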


\noindent As an immediate corollary to this theorem, we obtain the following \emph{dilation invariant consistency theorem}.

\begin{corollary}[Dilation invariant consistency theorem]\label{corConsis}
	Let $\spt_{1},\dots,\spt_{s}$ be $s$ dilation invariant partition regular finite systems of equations. Then in any finite colouring $\N=C_{1}\cup\cdots\cup C_{r}$ there exists a colour class $C_{t}$ such that each $\spt_{i}$ has a solution inside $C_{t}$.
\end{corollary}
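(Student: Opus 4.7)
The proof is a direct double application of Theorem~\ref{thmHomPR}, passing through a single combined system. The plan is to relabel variables so that the $\spt_i$ use pairwise disjoint variable sets, and then define $\spt := \spt_1 \sqcup \cdots \sqcup \spt_s$ to be the finite system consisting of all the equations of all the $\spt_i$ on this enlarged variable set. Because dilating the variables of $\spt$ by $\lambda \in \Q$ dilates the variables of each $\spt_i$-block separately, and each $\spt_i$ is dilation invariant by hypothesis, the combined system $\spt$ is itself dilation invariant.

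The content of the proof is to check that $\spt$ satisfies the syndetic solubility criterion of Theorem~\ref{thmHomPR}. Given any multiplicatively syndetic set $S \subseteq \N$, I would apply the forward direction of Theorem~\ref{thmHomPR} to each $\spt_i$ individually to produce a non-trivial solution $\vx^{(i)}$ with entries in $S$, and then concatenate these into $\vx = (\vx^{(1)}, \ldots, \vx^{(s)})$, a solution to $\spt$ with all entries in $S$. The concatenated tuple is automatically non-trivial because $\vx^{(1)}$ already contains two distinct entries. The reverse direction of Theorem~\ref{thmHomPR} then shows that $\spt$ itself is partition regular.

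Applied to the given finite colouring $\N = C_1 \cup \cdots \cup C_r$, partition regularity of $\spt$ produces a monochromatic non-trivial solution $\vx$ with all entries in some colour class $C_t$. Projecting $\vx$ onto the variables of each $\spt_i$-block yields a tuple satisfying the equations of $\spt_i$ with all entries in $C_t$, which is exactly what the corollary asserts. No step here is delicate: the argument is simply a packaging of Theorem~\ref{thmHomPR}, and the only mildly subtle point is that individual projections onto a single $\spt_i$-block may happen to be trivial (all entries equal) even though the combined solution is non-trivial; this is harmless because the conclusion of the corollary only asks that each $\spt_i$ have a solution inside $C_t$, not a non-trivial one.
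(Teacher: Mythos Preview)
Your proof is correct and follows essentially the same route as the paper: form the disjoint union system $\spt$ and use both directions of Theorem~\ref{thmHomPR} to conclude that $\spt$ is partition regular, then project. Your proof is in fact slightly more careful than the paper's, since you explicitly flag that the projections onto individual $\spt_i$-blocks need not be non-trivial, and correctly observe that the corollary as stated does not demand this.
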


\subsection{Brauer Configurations}

Van der Waerden \cite{Wae27} proved that, for all $r,k\in\N$, there exists a (minimal) positive integer $W(r,k)\in\N$ such that, in any $r$-colouring of the set $\{1,\dots,W(r,k)\}$, there exists a monochromatic arithmetic progression of length $k$. Obtaining good bounds for $W(r,k)$ is a notoriously difficult problem. Over 60 years after van der Waerden's original paper, Shelah \cite{Shelah} obtained the first primitive recursive bounds. The best bounds currently known are due to Gowers \cite{Gow01} who obtained the bound
\begin{equation}\label{GowBd}
\rW(r,k)\leqslant 2^{2^{r^{2^{2^{k+9}}}}}.
\end{equation}

In \S\ref{secBrau} we consider a variation of van der Waerden's theorem concerning configurations of the form 
\begin{equation*}
\{x,d,x+d,x+2d\}.
\end{equation*}
These are arithmetic progressions of length 3 along with their common difference. Brauer \cite{Bra28} was the first to establish the partition regularity of these configurations, and so we refer to them as \emph{Brauer configurations (of length 3)}. We also call the corresponding Rado numbers the \emph{($r$-colour) Brauer numbers}. Specifically, we define $\rB(r)\in\N$ to the the smallest positive integer such that every $r$-colouring of the interval $\{1,\dots,\rB(r)\}$ yields a monochromatic set of the form $\{x,d,x+d,x+2d\}$. 

To show that Brauer configurations (of length 3) are partition regular, Theorem \ref{thmHomPR} informs us that it is sufficient to prove that all multiplicatively syndetic sets contain such configurations. Our next result establishes a quantitative version of Brauer's theorem for multiplicatively syndetic sets.

\begin{theorem}\label{thmSynBrau}
	There exists a positive absolute constant $c>0$ such that the following is true. Let $S\subseteq\N$ be a multiplicatively $F$-syndetic set, for some non-empty finite set $F\subset\N$. Let $M$ denote the largest element of $F$. If $N\geqslant 3$ satisfies
	\begin{equation*}
	M\leqslant\exp\left(c\sqrt{\log\log N}\right), 
	\end{equation*}
	then there exists $d,x\in\N$ such that $\{x,d,x+d,x+2d\}\subseteq S\cap\{1,2,\dots,N\}.$
\end{theorem}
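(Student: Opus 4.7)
The plan is to adapt the density increment strategy of Roth, Green--Tao and Chow--Lindqvist--Prendiville to the syndetic setting, exploiting the multiplicative structure of $S$ to preserve syndeticity (not merely positive density) across iterations. Throughout, let $\delta:=|S\cap[N]|/N$. A double-counting argument using $F$-syndeticity — each set $a\cdot F$ with $a\leqslant N/M$ meets $S$, and each point of $[N]$ lies in at most $M$ such sets — will give $\delta\gg 1/M$ provided $N$ is a few multiplicative powers of $M$, which is ensured by the hypothesis $M\leqslant\exp(c\sqrt{\log\log N})$.

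The counting operator is
\begin{equation*}
\Lambda(S)\;:=\;\sum_{\substack{x,d\in[N]\\ x+2d\leqslant N}} 1_S(x)\,1_S(d)\,1_S(x+d)\,1_S(x+2d).
\end{equation*}
Writing $1_S=\delta\,1_{[N]}+f$ with $f$ mean-zero and expanding, I would isolate a main term of order $\delta^4 N^2$ and bound every remaining term by a suitable $U^2$/Fourier norm of $f$, being careful that the factor $1_S(d)$ (the new feature relative to standard Roth) is handled symmetrically. Assuming no non-trivial Brauer configuration lies in $S\cap[N]$, the trivial ($d=0$) contribution is only $O(\delta N)$, so for $N$ large enough in terms of $\delta$ the error must dominate the main term. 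A standard argument then produces some frequency $\alpha\in\mathbb{T}$ with $|\widehat{1_S}(\alpha)|\gg\delta^2 N$, hence an arithmetic progression $P=\{a+qn:n\leqslant N_1\}\subseteq[N]$ on which $S$ has relative density at least $(1+c)\delta$ — the standard Roth Fourier-to-progression transfer.

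The decisive multiplicative step is to convert the density increment on the affine progression $P$ into a density increment on a pure dilation, so that the resulting set is again multiplicatively $F$-syndetic. The key observation is that for any $q\in\N$, the set
\begin{equation*}
S/q\;:=\;\{n\in\N:qn\in S\}
\end{equation*}
is again multiplicatively $F$-syndetic with the \emph{same} $F$: given $b\in\N$, applying $F$-syndeticity of $S$ to $qb$ furnishes $t\in F$ with $qbt\in S$, so $bt\in S/q$. I would therefore reduce to the dilation case ($a=0$), absorbing the translation either via a further Fourier split on the arithmetic progression or by passing to a common refinement; this step is standard in the Green--Tao/CLP framework but must be executed without worsening the syndeticity parameter. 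The outcome of one iteration is thus: a set $S_1$ in $[N_1]$ that is still multiplicatively $F$-syndetic, with $\delta_1\geqslant(1+c)\delta$, and $N_1$ related to $N$ by a controlled loss.

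Finally, I would iterate. Since $\delta$ grows geometrically from $\gg 1/M$ towards $1$, the number of iterations is $T=O(\log M)$; the hypothesis $M\leqslant\exp(c\sqrt{\log\log N})$ gives $T\leqslant c'\sqrt{\log\log N}$, which should be the precise budget that allows the cumulative shrinkage $N\mapsto N_1\mapsto\cdots\mapsto N_T$ to leave $N_T$ above the trivial threshold. The main obstacle will be Step 3: executing the affine-to-dilation reduction so that multiplicative $F$-syndeticity persists with the \emph{same} $F$ (rather than an inflated $F$), since any worsening of $M$ per iteration would destroy the iteration budget and force a much weaker bound than $\exp(c\sqrt{\log\log N})$. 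Calibrating the per-step loss in $N$ against the per-step gain in $\delta$ so that exactly this quantitative dependence emerges is the quantitative heart of the argument.
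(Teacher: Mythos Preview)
Your proposal has a fundamental gap at the Fourier-analytic step: the Brauer configuration $\{x,d,x+d,x+2d\}$ is \emph{not} controlled by the $U^{2}$ norm. The system of linear forms $(x,d,x+d,x+2d)$ has Cauchy--Schwarz complexity $2$: for the form $d$, any two of the remaining forms $\{x,x+d,x+2d\}$ already span $d$, so there is no partition of $\{x,x+d,x+2d\}$ into two classes neither of which contains $d$ in its span (and similarly for the other forms). Hence the generalised von Neumann argument yields only $U^{3}$ control --- this is precisely Lemma~\ref{lemLamControl}, and is flagged explicitly in the introduction. No amount of ``handling $1_{S}(d)$ symmetrically'' will extract a large linear Fourier coefficient from a deficit of Brauer configurations; a function built from a quadratic phase can have small $U^{2}$ norm while still correlating with the count. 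The paper therefore replaces your Roth-style increment with the Green--Tao quadratic machinery: a quadratic Koopman--von Neumann theorem (Theorem~\ref{BB1}) gives a density increment on an atom of a quadratic factor (Corollary~\ref{cor5.8}), which is then linearised to an arithmetic progression via Theorem~\ref{BB2}. The per-step loss is $N\mapsto N^{c\delta^{C}M^{-C}}$, and iterating $O(\log(\delta^{-1}))$ times with $\delta\gg M^{-2}$ produces exactly the bound $\log\log N\ll (\log M)^{2}$.

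Your ``Step 3'' worry (affine-to-dilation) is real under your setup but is a symptom of a second, easily repaired, structural choice: you place $1_{S}$ in all four positions, which forces $S$ itself to be translated when you pass to a subprogression --- and translation genuinely destroys multiplicative syndeticity. The paper sidesteps this by separating roles: a dense set $A\subseteq[N]$ carries the translation-invariant part $\{x,x+d,x+2d\}$, while the syndetic set $S$ supplies only the common difference $d$. Under the affine map $n\mapsto a_{i}+d_{i}n$ one pulls back $A$ (harmless, since $3$-APs are translation invariant) and replaces $S$ by $d_{i}^{-1}S$, which as you correctly observe is multiplicatively $F$-syndetic with the \emph{same} $F$. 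With this separation there is no affine-to-dilation obstacle; the genuine work lies entirely in the quadratic density increment.
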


This theorem is analogous to Green and Tao's result \cite[Theorem 1.1]{GT09} that sets $A\subseteq\{1,2,\dots,N\}$ which lack $4$-term arithmetic progressions have size
\begin{equation*}
|A|\leqslant N\exp\left(-c\sqrt{\log\log N}\right).
\end{equation*} 
We deduce Theorem \ref{thmSynBrau} from a more general density result (Theorem \ref{thmSynDensInc}), which concerns dense sets $A\subseteq\{1,2,\dots,N\}$ lacking arithmetic progressions of length $3$ with common difference lying in a given multiplicatively syndetic set $S$. This density result is proven in \S\ref{secBrau} by combining the methods of Green and Tao \cite{GT09} with a `multiplicatively syndetic induction on colours' argument of Chow, Lindqvist, and Prendiville \cite{CLP}.

Brauer's theorem may be proved by iteratively applying van der Waerden's theorem. As indicated by Cwalina and Schoen \cite{Cwalina}, the best bounds one can obtain for the Brauer numbers by incorporating Gowers' bound (\ref{GowBd}) into this argument are of the form
\begin{equation*}
\rB(r)\leqslant\Tow\left((5+o(1))r\right).
\end{equation*}
Here $\Tow(n)$ denotes an exponential tower of $2$'s of height $n$. Explicitly, we take $\Tow(1):=2$ and for all $n\geqslant 2$ define
\begin{align*}
\Tow(n):=2^{\Tow(n-1)}.
\end{align*}
By incorporating Theorem \ref{thmSynBrau} into an induction on colours argument, we obtain an asymptotic improvement on this bound.

\begin{theorem}[Tower bound for $\rB(r)$]\label{thmBrauerBound}
	For each $r\in\N$,
	\begin{equation}
	\rB(r)\leqslant \Tow\left((1+o(1))r\right).
	\end{equation}
\end{theorem}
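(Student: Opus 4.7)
The plan is to prove Theorem~\ref{thmBrauerBound} by induction on $r$, combining Theorem~\ref{thmSynBrau} with an induction-on-colours argument in the spirit of Chow, Lindqvist, and Prendiville~\cite{CLP}. The base case $r=1$ is trivial, since the interval $\{1,2,3,4\}$ itself is a Brauer configuration (take $x=2$, $d=1$).

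For the inductive step, fix an $r$-colouring $\{1,\dots,N\}=C_1\cup\cdots\cup C_r$, set $M:=\rB(r-1)$, and let $F:=\{1,\dots,M\}$. The key dichotomy is as follows. Either (i) there is some colour class $C_j$ that meets every dilate $a\cdot F\subseteq\{1,\dots,N\}$, in which case $C_j$ is effectively multiplicatively $F$-syndetic within the window $\{1,\dots,N\}$; or (ii) for every $j$ there exists $a_j\in\N$ with $(a_j\cdot F)\cap C_j=\emptyset$. In case~(ii), the dilate $a_1\cdot F$ is coloured by only $r-1$ classes, and the dilation invariance of Brauer configurations combined with the inductive definition $M=\rB(r-1)$ produces a monochromatic Brauer configuration inside $a_1\cdot F$. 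In case~(i), a finite-range adaptation of Theorem~\ref{thmSynBrau} yields a Brauer configuration inside $C_j$, provided that $N$ is large enough to satisfy $M\leq\exp(c\sqrt{\log\log N})$.

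Rearranging this size constraint gives the recurrence
\begin{equation*}
\rB(r)\leq\exp\exp\!\bigl((\log\rB(r-1))^2/c^2\bigr),
\end{equation*}
valid for all $r\geq 2$. To extract the claimed bound, I would iterate this recurrence and carefully track the tower-of-twos height. The key points are that $(\log T)^2$ sits strictly below $T$ in the tower hierarchy (formally, $\Tow(k)^2\leq\Tow(k+1)$ for $k\geq 3$), and that the remaining double exponential contributes only a bounded shift in the tower height. A careful accounting, analogous to the tower computation in the Green--Tao $4$-AP bound, then gives $\rB(r)\leq\Tow(r+O(1))$, which is precisely $\Tow((1+o(1))r)$.

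I expect the main obstacle to be the finite-range version of Theorem~\ref{thmSynBrau} required in case~(i). The theorem is phrased for multiplicatively $F$-syndetic subsets of all of $\N$, whereas the colouring only supplies this property for dilates inside the window $\{1,\dots,N\}$. I would address this by inspecting the density-increment argument underlying Theorem~\ref{thmSynDensInc} and verifying that its invocations of the syndeticity of $S$ only access elements within a bounded range controlled by $N$; alternatively, one could extend the colouring to $\N$ periodically and check that this extension preserves the needed syndeticity without introducing spurious Brauer configurations. A secondary challenge is the tower arithmetic itself: obtaining the sharp constant $1+o(1)$ rather than a coarser $O(r)$ requires delicately exploiting that the squaring in $(\log\rB(r-1))^2$ consumes strictly less than a full tower level, so that the double exponential pushes the tower height up by just $1+o(1)$ at each induction step.
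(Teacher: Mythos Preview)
Your approach is essentially the same as the paper's: induction on colours combined with the syndetic Brauer result to obtain a recursive bound, followed by tower arithmetic. Two points of comparison are worth noting.

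First, the paper derives its recurrence from Theorem~\ref{thmSynDensInc} rather than Theorem~\ref{thmSynBrau}. Since a bad $(r{+}1)$-colouring forces \emph{every} colour class to be $[M]$-syndetic (with $M=\rB(r)$), the paper applies Theorem~\ref{thmSynDensInc} to the \emph{largest} class with $\delta=1/(r{+}1)$, obtaining $\log\log\rB(r{+}1)\ll\log(r{+}1)\log\rB(r)$. Your route through Theorem~\ref{thmSynBrau} implicitly uses $\delta\asymp M^{-2}$ (via Lemma~\ref{lemSynDens}) and yields the weaker $\log\log\rB(r)\ll(\log\rB(r{-}1))^2$. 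Both recurrences suffice for $\Tow((1+o(1))r)$, though your tower bookkeeping needs the same ``junk at the top'' device as Appendix~\ref{secTowerBd}: one shows $\rB(r)\leq T_{r+1}(2,\dots,2,g(r))$ with $g(r)$ growing only linearly, and then absorbs $g(r)$ into $o(r)$ extra tower levels. Note that your stated conclusion $\rB(r)\leq\Tow(r+O(1))$ is slightly too strong---neither recurrence gives a bounded top term---but $\Tow((1+o(1))r)$ follows.

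Second, the finite-range obstacle you anticipate dissolves more easily than you suggest: if $C_j$ meets every dilate $a\cdot[M]\subseteq[N]$, then $S:=C_j\cup(\N\setminus[N])$ is genuinely multiplicatively $[M]$-syndetic in $\N$, and Theorem~\ref{thmSynBrau} applied to this $S$ produces a Brauer configuration in $S\cap[N]=C_j$. No periodic extension or inspection of the density-increment internals is needed.
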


In general, for a partition regular system of equations $\spt$, one can define the \emph{$r$-colour Rado number} $\rR_{\spt}(r)$ to be the smallest $N\in\N$ such that every $r$-colouring of the interval $\{1,\dots,N\}$ yields a monochromatic solution to $\spt$. Cwalina and Schoen \cite[Theorem 1.5]{Cwalina} proved that if $\spt$ is a partition regular homogeneous linear equation of the form
\begin{equation*}
a_{1}x_{1}+\cdots+a_{s}x_{s}=0,
\end{equation*}
where $a_{1},\dots,a_{s}\in\Z\setminus\{0\}$, then
\begin{equation*}
\rR_{\spt}(r)\ll_{\spt} 2^{O_{\spt}(r^{4}\log r)}.
\end{equation*} 
The improvements obtained by Cwalina and Schoen for single equations ultimately derive from the fact that single linear equations are controlled by the $U^{2}$ norm (see \S\ref{secBrau} for a definition of the $U^{s}$ norms), and so they can be analysed with (linear) Fourier analysis. However, Brauer configurations of length $3$ are controlled by the $U^{3}$ norm and therefore require methods from quadratic Fourier analysis. In \cite{ChapPren} we use higher order Fourier analysis to improve on Theorem \ref{thmBrauerBound} by obtaining a double exponential bound of the form
\begin{equation*}
\rB(r)\leqslant\exp\exp(r^{C}).
\end{equation*}
More generally, we show that a bound of the above form holds for Brauer configurations of any length $k$ (with constant $C$ depending on the length $k$).
\subsection*{Notation}

The positive integers are denoted by $\N$. Given $X\geqslant 1$, we let $[X]:=\{n\in\N: 1\leqslant n\leqslant X\}=\{1,2,\dots,\lfloor X\rfloor \}$.

Let $f$ and $g$ be positively valued functions. We write $f\ll g$, or $g\gg f$, or $f=O(g)$ if there exists a positive constant $C$ such that $f(x)\leqslant C g(x)$ for all $x$. If we require the constant $C$ to depend on some parameters $\lambda_{1},\dots,\lambda_{k}$, then we write $f\ll_{\lambda_{1},\dots,\lambda_{k}} g$ or $f=O_{\lambda_{1},\dots,\lambda_{k}}(g)$. 

The letters $c$ and $C$ are typically used to denote absolute constants, whose values may change from line to line. We usually write $c$ to denote a small constant $0<c<1$, whereas $C$ usually denotes a large constant $C>1$.

\subsection*{Acknowledgements}

The author would like to thank Sean Prendiville for his constant support and encouragement, and for his helpful comments on an earlier draft of this paper. We also thank the anonymous referee for their comments and suggestions on an earlier version of this paper.

\section{Multiplicative Syndeticity and Partition Regularity}\label{secDefProp}

We begin by formally introducing the concepts of partition regularity and multiplicative syndeticity mentioned in the introduction. After establishing the basic properties of multiplicatively syndetic sets, we prove Theorem \ref{thmHomPR} and Corollary \ref{corConsis}.

\subsection{Systems of Equations}

 We consider finite systems of polynomial equations $\spt$ in $s\in\N$ variables of the form
\begin{align}\label{defSysEq}
p_{1}(t_{1},t_{2},\dots,t_{s})&=0;\nonumber\\
p_{2}(t_{1},t_{2},\dots,t_{s})&=0;\nonumber\\
&\vdots\\
p_{k}(t_{1},t_{2},\dots,t_{s})&=0,\nonumber
\end{align}
where each $p_{i}\in\Q[t_{1},t_{2},\dots,t_{s}]$ is a polynomial in the variables $\{t_{i}\}_{i=1}^{s}$.
In this paper we only consider systems of finitely many equations, each with finitely many variables. For related results concerning the regularity of infinite systems, see \cite{BHLS15,HLS03}.

We usually refer to such a system of polynomial equations $\spt$ simply as a \emph{system of equations}. We call $\vx\in\N^{s}$ a \emph{solution} to the system $\spt$ if $p_{i}(\vx)=0$ for all $i$, meaning that $\vx$ is a solution to all of the equations in $\spt$ simultaneously. A solution $\vx=(x_{1},\dots,x_{s})$ is called a \emph{non-trivial solution} if the entries of $\vx$ are not all equal, meaning that $x_{i}\neq x_{j}$ for some $i\neq j$. Given a set $S\subseteq\Q$, we say that $\spt$ has a (non-trivial) solution in $S$ if there is a (non-trivial) solution $\vx=(x_{1},\dots,x_{s})\in\N^{s}$ to $\spt$ such that $x_{i}\in S$ for all $i$.

A system of equations $\spt$ is called \emph{dilation invariant} if the following is true. If $\vx=(x_{1},\dots,x_{s})$ is a solution to $\spt$, then $\lambda\vx=(\lambda x_{1},\dots,\lambda x_{s})$ is also a solution for every $\lambda\in\Q$. For the majority of this paper, we restrict our attention to dilation invariant systems of polynomial equations. However it should be noted that most of the results we prove in this section apply to any dilation invariant system of equations and not just those consisting of polynomial equations.

\subsection{Partition Regularity}

As mentioned in the introduction, the partition regularity of equations is a well-studied topic in Ramsey theory. Recall that an \emph{$r$-colouring} of a set $X$ is a partition $X=C_{1}\cup\cdots\cup C_{r}$ of $X$ into $r$ \emph{colour classes} $C_{i}$. Equivalently, an $r$-colouring can be defined by a function $\chi:X\to A$, for some set $A=\{a_{1},\dots,a_{r}\}$ with $|A|=r$ (usually we take $A=[r]$). These two characterisations can be seen to be equivalent by taking $\chi^{-1}(a_{i})=C_{i}$. A subset $Y\subseteq X$ is called \emph{$(\chi)$-monochromatic} if $\chi$ is constant on $Y$, or equivalently that $Y\subseteq C_{i}$ for some colour class $C_{i}$.

\begin{definition}[Partition regularity]
	Let $S\subseteq\Q$ be a non-empty set and let $\spt$ be a system of equations with coefficients in $\Q$. Let $r\in\N$. We say that $\spt$ is \emph{(kernel) $r$-regular over $S$} if, for each $r$-colouring $\chi:\Q\to[r]$, there exists a $\chi$-monochromatic non-trivial solution $\vx$ to $\spt$ with entries in $S$. We call such an $\vx$ a ($\chi$-)\emph{monochromatic (non-trivial) solution} to $A$. We say that $\spt$ is \emph{(kernel) partition regular over $S$} if $\spt$ is $r$-regular over $S$ for every $r\in\N$. 
\end{definition}

In practice, when one shows that a given system of equations $\spt$ is $r$-regular, the proof actually yields a number $\rR_{\spt}(r)$ (known as the \emph{$r$ colour Rado number} for $\spt$) such that $\spt$ is $r$-regular over the finite interval $[\rR_{\spt}(r)]$. This is certainly the case whenever one obtains a quantative regularity result, such as in \cite{Cwalina,Gow01,Sch16,Wae27}. By assuming (some form of) the axiom of choice, one can show that if $\spt$ is $r$-regular, then such an $\rR_{\spt}(r)$ necessarily exists. This result is known as the compactness principle.\vspace{3mm}

\noindent\textbf{Compactness Principle.} \emph{Let $\spt$ be a finite system of equations in finitely many variables. Let $A\subseteq\N$ and let $r\in\N$. Then $\spt$ is $r$-regular over $A$ if and only if there exists a finite set $F\subseteq A$ such that $\spt$ is $r$-regular over $F$.}
\begin{proof}
	See \cite[Theorem 4]{GRS90}.
\end{proof}

\begin{remark}
	For the rest of this section, we assume (some form of) the axiom of choice in order to make use of the compactness principle. This assumption is not required for any of the remaining sections.
\end{remark}

\subsection{Multiplicatively Thick Sets}
The compactness principle informs us that a system of equations $\spt$ is partition regular if and only if, for each $r\in\N$, we can find a finite set $F_{r}\subset N$ such that $\spt$ is $r$-regular over $F_{r}$. Thus, a sufficient condition for $\spt$ to be partition regular over a set $A\subseteq\N$ would be that $F_{r}\subseteq A$ for all $r\in\N$. The problem with this condition is that it is quite possible that the only set which could satisfy this property is $A=\N$. If $\spt$ is a dilation invariant system of equations, then we can relax this condition to the requirement that, for each $r\in\N$, we can find $t_{r}\in\N$ such that $t_{r}\cdot F_{r}\subseteq A$. This motivates the following definition.

\begin{definition}[Multiplicatively thick set]
	Let $T\subseteq\N$. We say that $T$ is a \emph{multiplicatively thick set} if, for each finite set $F\subset\N$, there exists $t\in\N$ such that $t\cdot F\subseteq T$.
\end{definition}

\begin{proposition}[Regularity over thick sets]\label{propThickEqv}
	Let $\spt$ be a dilation invariant system of equations. Let $r\in\N$. Then the following are all equivalent:
	\begin{enumerate}[\upshape(I)]
		\item $\spt$ is $r$-regular;
		\item $\spt$ is $r$-regular over every multiplicatively thick set;
		\item $\spt$ is $r$-regular over some multiplicatively thick set $T$.
	\end{enumerate}
\end{proposition}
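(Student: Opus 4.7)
The plan is to prove the chain of implications (I) $\Rightarrow$ (II) $\Rightarrow$ (III) $\Rightarrow$ (I), where the only substantial step is (I) $\Rightarrow$ (II). The implication (II) $\Rightarrow$ (III) is immediate, since $\N$ itself is multiplicatively thick (take $t$ to be any element of $\N$). The implication (III) $\Rightarrow$ (I) is also immediate: since $r$-regularity over a set $S \subseteq \N$ produces, for every $r$-colouring of $\Q$, a monochromatic non-trivial solution with entries in $S \subseteq \N$, this is in particular a monochromatic non-trivial solution, so $\spt$ is $r$-regular.

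The main content is therefore (I) $\Rightarrow$ (II), and this is where dilation invariance will be used. Suppose $\spt$ is $r$-regular and let $T$ be a multiplicatively thick set. By the compactness principle, there exists a finite subset $F \subset \N$ such that $\spt$ is $r$-regular over $F$. Since $T$ is multiplicatively thick, we can find $t \in \N$ with $t \cdot F \subseteq T$. Now, given an arbitrary $r$-colouring $\chi : \Q \to [r]$, I would define a new $r$-colouring $\chi' : \Q \to [r]$ by $\chi'(n) := \chi(tn)$. By hypothesis, $\chi'$ admits a monochromatic non-trivial solution $\vx = (x_{1},\dots,x_{s}) \in F^{s}$ to $\spt$.

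The final step is to dilate: by dilation invariance of $\spt$, the vector $t\vx = (tx_{1},\dots,tx_{s})$ is also a solution, and it is non-trivial because if $x_{i} \neq x_{j}$ then $tx_{i} \neq tx_{j}$. Moreover $t\vx \in (t \cdot F)^{s} \subseteq T^{s}$, and by definition of $\chi'$ the value $\chi(tx_{i}) = \chi'(x_{i})$ is constant in $i$, so $t\vx$ is $\chi$-monochromatic. This produces the required monochromatic solution to $\spt$ inside $T$, completing (I) $\Rightarrow$ (II).

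I do not expect any real obstacle here: the proposition is essentially the formal justification for why multiplicatively thick sets are the "right" enlargement class for dilation invariant regularity problems. The only point that requires any care is checking that non-triviality is preserved under scaling by $t$ (which is trivial since $t \neq 0$), and that the compactness principle may indeed be applied in this setting, which is explicitly permitted by the standing assumption of the axiom of choice made in the remark immediately preceding the proposition.
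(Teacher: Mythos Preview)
Your proof is correct and follows essentially the same approach as the paper: the implications (II)$\Rightarrow$(III)$\Rightarrow$(I) are immediate, and for (I)$\Rightarrow$(II) you use compactness to pass to a finite $F$, thickness to find $t$ with $t\cdot F\subseteq T$, pull back the colouring along multiplication by $t$, and then dilate the resulting monochromatic solution. The paper's proof is the same argument, written slightly more tersely and without the explicit check that non-triviality survives dilation.
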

\begin{proof}
	The implications (II)$\Rightarrow$(III) and (III)$\Rightarrow$(I) are immediate. It only remains to establish (I)$\Rightarrow$(II).
	
	Suppose $\spt$ is $r$-regular. By compactness, we can find a finite set $F\subset\N$ such that $\spt$ is $r$-regular over $F$. Now let $T\subseteq\N$ be a multiplicatively thick set. We can then find $t\in\N$ such that $t\cdot F\subseteq T$. Now suppose $\chi:T\to [r]$ is an $r$-colouring of $T$. Define a new $r$-colouring $\tilde{\chi}:F\to [r]$ of $F$ by $\tilde{\chi}(x)=\chi(tx)$. Since $\spt$ is $r$-regular over $F$, we can find a $\tilde{\chi}$-monochromatic solution $\vx$ to $\spt$ in $F$. By dilation invariance, we deduce that $t\vx$ is a $\chi$-monochromatic solution to $\spt$ in $T$. Thus $\spt$ is $r$-regular over $T$.
\end{proof}

\subsection{Multiplicatively Syndetic Sets}
We have now reduced regularity over $\N$ to regularity over a multiplicatively thick set. The utility of Proposition \ref{propThickEqv} is demonstrated in the following argument. Suppose that we have a dilation invariant system of equations $\spt$ and an integer $r>1$ such that $\spt$ is $(r-1)$-regular. We would like to use this to test whether $\spt$ is $r$-regular. Suppose that we have an $r$-colouring $\N=C_{1}\cup\cdots\cup C_{r}$. Informally, if we know that one of the colour classes $C_{j}$ is `small', then we would expect, by $(r-1)$-regularity, to find a monochromatic solution in a colour class $C_{i}$ with $i\neq j$. 

To make this rigorous, suppose that we have a colour class $C_{j}$ such that the complement $\N\setminus C_{j}$ is multiplicatively thick. The remaining $(r-1)$ colour classes induce an $(r-1)$-colouring on $\N\setminus C_{j}$. By Proposition \ref{propThickEqv}, since $\spt$ is $(r-1)$-regular, we can find a monochromatic solution to $\spt$ inside $\N\setminus C_{j}$.

This shows that if the dilation invariant system $\spt$ is $(r-1)$-regular but not $r$-regular, then there is an $r$-colouring $\N=C_{1}\cup\cdots C_{r}$ without non-trivial monochromatic solutions to $\spt$ such that each complement $\N\setminus C_{i}$ is not multiplicatively thick. Observe that $\N\setminus C_{i}$ is not multiplicatively thick if and only if there exists a finite set $F\subset\N$ such that, for every $n\in\N$, we have $(n\cdot F)\cap C_{i}\neq\emptyset$. This motivates the following definition.

\begin{definition}[Multiplicatively syndetic set]
	Let $S\subseteq\N$. Let $F\subset\N$ be a non-empty finite set. We say that $S$ is \emph{multiplicatively $F$-syndetic} if, for each $n\in\N$, we can find some $t\in F$ such that $nt\in S$. Equivalently, for every $n\in\N$, we have $(n\cdot F)\cap S\neq\emptyset$.
	We call $S\subseteq\N$ \emph{multiplicatively syndetic} if $S$ is multiplicatively $F$-syndetic for some non-empty finite set $F\subset\N$.
\end{definition}

\begin{remark}
Chow, Lindqvist, and Prendiville \cite{CLP} define an \emph{$M$-homogeneous set} to be a set which intersects every homogeneous arithmetic progression $x\cdot[M]$ of length $M$ for every $x\in\N$. We therefore observe that an $M$-homogeneous set is exactly the same as a multiplicatively $[M]$-syndetic set.
\end{remark}

As mentioned previously, multiplicatively syndetic sets can be equivalently defined in terms of multiplicatively thick sets.

\begin{proposition}
	Let $S\subseteq \N$. Then the following are all equivalent:
	\begin{enumerate}[\upshape(I)]
		\item $S$ is multiplicatively syndetic;
		\item for every multiplicatively thick set $T$, we have $S\cap T\neq\emptyset$;
		\item $\N\setminus S$ is not multiplicatively thick.
	\end{enumerate}
\end{proposition}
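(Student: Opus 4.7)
The plan is to observe that all three conditions are essentially restatements of each other, connected by taking complements and negating quantifiers. I would first establish $(I) \Leftrightarrow (III)$ by directly unpacking definitions, and then deduce $(II) \Leftrightarrow (III)$ from an elementary monotonicity observation.

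For $(I) \Leftrightarrow (III)$, I would negate the definition of multiplicatively syndetic. By definition, $S$ fails to be multiplicatively syndetic precisely when for every finite nonempty $F \subset \N$ there exists some $n \in \N$ such that $(n \cdot F) \cap S = \emptyset$, equivalently $n \cdot F \subseteq \N \setminus S$. But this is exactly the statement that $\N \setminus S$ is multiplicatively thick. Taking the contrapositive yields the equivalence of (I) and (III).

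For $(III) \Rightarrow (II)$, I would argue by contradiction: if some multiplicatively thick $T$ satisfies $T \cap S = \emptyset$, then $T \subseteq \N \setminus S$, and the defining property of $T$ (every finite $F$ admits some $t$ with $t \cdot F \subseteq T$) is inherited by the superset $\N \setminus S$, contradicting (III). Conversely, for $(II) \Rightarrow (III)$, if $\N \setminus S$ were multiplicatively thick, then $T := \N \setminus S$ would itself be a multiplicatively thick set with $S \cap T = \emptyset$, contradicting (II).

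There is no substantial obstacle here; the whole argument is a matter of carefully keeping track of quantifiers and applying the fact that any superset of a multiplicatively thick set is multiplicatively thick. The only place where one needs to pause is verifying that the negation of ``$S$ is multiplicatively syndetic'' really does match the definition of ``$\N \setminus S$ is multiplicatively thick'' on the nose, which it does once one notes that the empty-set case of $F$ plays no role in either definition.
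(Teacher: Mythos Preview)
Your proposal is correct and takes essentially the same approach as the paper: both arguments amount to unwinding the definitions and noting that ``$S$ is multiplicatively $F$-syndetic'' is precisely the negation of ``$\N\setminus S$ contains some $n\cdot F$''. The only cosmetic difference is that the paper proves the cycle (I)$\Rightarrow$(II)$\Rightarrow$(III)$\Rightarrow$(I), whereas you establish (I)$\Leftrightarrow$(III) and (II)$\Leftrightarrow$(III) separately and invoke monotonicity of thickness for (III)$\Rightarrow$(II); the content is the same.
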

\begin{proof}$ $
	
	(I)$\Rightarrow$(II): Suppose $S$ is multiplicatively $F$-syndetic for some $F\subset\N$, and suppose $T\subseteq\N$ is a multiplicatively thick set. This means that we can find $t_{T}\in\N$ such that $t_{T}\cdot F\subseteq T$. Since $S$ is multiplicatively $F$-syndetic, we have $(t_{T}\cdot F)\cap S\neq\emptyset$. In particular, $S\cap T\neq\emptyset$.
	
	(II)$\Rightarrow$(III): Follows from the fact that $S$ and $\N\setminus S$ are disjoint.
	
	(III)$\Rightarrow$(I): Since $\N\setminus S$ is not multiplicatively thick, we can find a non-empty finite set $F\subset\N$ such that $t\cdot F\nsubseteq \N\setminus S$ for every $t\in\N$. This implies that $S$ is multiplicatively $F$-syndetic.
\end{proof}

In Proposition \ref{propThickEqv} we showed that a dilation invariant system of equations is $r$-regular if and only if it is $r$-regular over all multiplicatively thick sets. This is a consequence of the `largeness' of multiplicatively thick sets. We now prove a similar result for multiplicatively syndetic sets. To do this, we identify multiplicatively syndetic sets with finite colourings in the following manner.

\begin{definition}[Encoding function]
	Let $S\subseteq\N$ be a multiplicatively $F$-syndetic set, for some non-empty finite  $F\subset\N$. The \emph{encoding function for $(S,F)$} is the function $\tau_{S;F}:\N\to F$ defined by
	\begin{equation*}
	\tau_{S;F}(n):=\min\{t\in F:nt\in S \}.
	\end{equation*}
\end{definition}
\noindent Note that the assertion that $\tau_{S;F}$ is a well-defined total function is equivalent to the statement that $S$ is multiplicatively $F$-syndetic.

The encoding function $\tau_{S;F}$ defines a finite colouring of $\N$. Moreover, if a set $A$ is monochromatic with respect to this colouring, then there exists some $t\in F$ such that $t\cdot A\subseteq S$. This observation leads to the following result.

\begin{proposition}[Syndetic sets contain PR configurations]\label{propSSPR}
	Let $A\subseteq\N$. Let $S\subseteq\N$ be a multiplicatively $F$-syndetic set, for some non-empty finite set $F\subset\N$. Let $\spt$ be a dilation invariant system of equations, and let $r\in\N$. If $\spt$ is $(|F|\cdot r)$-regular over $A$, then $\spt$ is $r$-regular over $S\cap(F\cdot A)$.
\end{proposition}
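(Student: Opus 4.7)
The plan is to combine the given $r$-colouring $\chi$ of $\N$ with the encoding function $\tau_{S;F}$ to produce a refined colouring of $A$ with $|F| \cdot r$ colours, apply the hypothesis of $(|F| \cdot r)$-regularity over $A$, and then dilate the resulting solution by a common element of $F$ to land inside $S \cap (F \cdot A)$.

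More precisely, fix an arbitrary $r$-colouring $\chi : \N \to [r]$. Define an auxiliary colouring $\tilde{\chi} : A \to F \times [r]$ by
\[
\tilde{\chi}(a) := \bigl(\tau_{S;F}(a),\, \chi\bigl(\tau_{S;F}(a) \cdot a\bigr)\bigr).
\]
This is well defined because $S$ is multiplicatively $F$-syndetic, so $\tau_{S;F}$ is total, and it uses at most $|F| \cdot r$ colours. Since $\spt$ is $(|F|\cdot r)$-regular over $A$, there exists a $\tilde{\chi}$-monochromatic non-trivial solution $\vx = (x_{1},\dots,x_{s}) \in A^{s}$ to $\spt$. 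The monochromaticity means there is a single $t \in F$ and a single colour $c \in [r]$ such that for every $i$,
\[
\tau_{S;F}(x_{i}) = t \quad \text{and} \quad \chi(tx_{i}) = c.
\]

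Now I would finish as follows. By definition of $\tau_{S;F}$, the first condition gives $tx_{i} \in S$ for every $i$; since $t \in F$ and $x_{i} \in A$, we also have $tx_{i} \in F \cdot A$. Thus each $tx_{i}$ lies in $S \cap (F \cdot A)$, and they share the common $\chi$-colour $c$. Dilation invariance of $\spt$ then ensures that $t\vx = (tx_{1},\dots,tx_{s})$ is a solution to $\spt$, and it is non-trivial because $\vx$ is non-trivial and $t \neq 0$. This exhibits the required $\chi$-monochromatic non-trivial solution to $\spt$ inside $S \cap (F \cdot A)$, establishing $r$-regularity over that set.

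There is essentially no obstacle beyond correctly packaging the auxiliary colouring: the only subtlety is ensuring that $\tau_{S;F}$ is a \emph{function} (total and single-valued) so that the pair $\tilde{\chi}$ makes sense on all of $A$, which follows immediately from multiplicative $F$-syndeticity together with the explicit $\min$ in the definition of the encoding function. Everything else is a direct application of dilation invariance and the hypothesised regularity.
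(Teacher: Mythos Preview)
Your proof is correct and follows essentially the same approach as the paper: both form the product colouring $\tilde{\chi}(a) = (\tau_{S;F}(a), \chi(a\,\tau_{S;F}(a)))$ on $A$, apply $(|F|\cdot r)$-regularity to extract a monochromatic solution, and then dilate by the common value of $\tau_{S;F}$ to land in $S\cap(F\cdot A)$. Your version even makes the non-triviality of the dilated solution explicit, which the paper leaves implicit.
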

\begin{proof}
	Suppose $\chi:S\cap(F\cdot A)\to[r]$ is an $r$-colouring. Let $\tau=\tau_{S;F}$. Now let $\tilde{\chi}:A\to F\times [r]$ be the product colouring given by
	\begin{equation*}
	\tilde{\chi}(n):=(\tau(n),\chi(n\tau(n))).
	\end{equation*}
	Since $\spt$ is $(|F|\cdot r)$-regular over $A$, we can find a $\tilde{\chi}$-monochromatic solution $\va$ to $\spt$ whose entries $a_{i}$ all lie in $A$. From the definition of $\tilde{\chi}$, we can find $t\in F$ such that $\tau(a_{i})=t$ for each entry $a_{i}$. From the dilation invariance of $\spt$, we deduce that $\vx:=t\va$ is a $\chi$-monochromatic solution to $\spt$ whose entries $x_{i}=ta_{i}$ all lie in $S\cap (F\cdot A)$.
\end{proof}

\noindent This proposition immediately gives the following corollary.

\begin{corollary}\label{corPRsyn}
	Let $\spt$ be a dilation invariant system of equations. Then the following are all equivalent:
	\begin{enumerate}[\upshape(I)]
		\item $\spt$ is partition regular (over $\N$);
		\item $\spt$ is partition regular over every multiplicatively syndetic set;
		\item $\spt$ is partition regular over some multiplicatively syndetic set $S\subseteq\N$.
	\end{enumerate}
\end{corollary}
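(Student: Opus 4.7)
My plan is to derive Corollary \ref{corPRsyn} directly from Proposition \ref{propSSPR}, with the main content being the implication (I)$\Rightarrow$(II); the other two implications will be immediate observations.

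I would start with the trivial reductions. For (II)$\Rightarrow$(III), I would note that $\N$ itself is multiplicatively $\{1\}$-syndetic, so (II) produces at least one example of a multiplicatively syndetic set over which $\spt$ is partition regular. For (III)$\Rightarrow$(I), I would observe that if $S \subseteq \N$ is any set over which $\spt$ is $r$-regular, then any $r$-colouring $\chi:\Q\to[r]$ yields a $\chi$-monochromatic non-trivial solution whose entries lie in $S \subseteq \N$; running this for every $r$ gives (I).

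The substantive step is (I)$\Rightarrow$(II). Fix a multiplicatively $F$-syndetic set $S \subseteq \N$, where $F \subset \N$ is non-empty and finite, and fix $r \in \N$; the goal is to show that $\spt$ is $r$-regular over $S$. Applying the hypothesis (I) with the larger number of colours $|F| \cdot r$ gives that $\spt$ is $(|F| \cdot r)$-regular over $\N$. I would then invoke Proposition \ref{propSSPR} with $A = \N$: this proposition converts $(|F| \cdot r)$-regularity over $A$ into $r$-regularity over $S \cap (F \cdot A)$. Since $S \cap (F \cdot \N) \subseteq S$, any $\chi$-monochromatic non-trivial solution produced in $S \cap (F \cdot \N)$ is a fortiori a $\chi$-monochromatic non-trivial solution with entries in $S$. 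Letting $r$ range over $\N$ then gives partition regularity of $\spt$ over $S$, completing (II).

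There isn't really a hard step here: all three implications unwind routinely once Proposition \ref{propSSPR} is in hand. The one point worth handling carefully is the bookkeeping of colour classes in the (I)$\Rightarrow$(II) step, namely applying (I) with $|F| \cdot r$ rather than $r$ colours so that Proposition \ref{propSSPR} can be invoked, and checking that the output set $S \cap (F \cdot \N)$ is contained in $S$ rather than needing to equal $S$.
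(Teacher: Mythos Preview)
Your proposal is correct and matches the paper's intended approach: the paper simply states that the corollary follows immediately from Proposition~\ref{propSSPR}, and you have spelled out exactly that deduction, with the key step being the application of Proposition~\ref{propSSPR} with $A=\N$ and the observation that $r$-regularity over $S\cap(F\cdot\N)\subseteq S$ implies $r$-regularity over $S$.
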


We have thus shown that partition regularity over $\N$ is equivalent to partition regularity over a particular multiplicatively syndetic set. Our goal now is to prove Theorem \ref{thmHomPR} and therefore show that partition regularity over $\N$ is actually equivalent to $1$-regularity over every multiplicatively syndetic set. 

Recall that our motivation for introducing multiplicatively syndetic sets came from considering colourings in which some of the colour classes were not multiplicatively thick. This leads to the following induction argument first developed in \cite{CLP} to establish partition regularity of certain non-linear dilation invariant equations.

\begin{lemma}[Induction on colours schema]\label{lemColInd}
	Let $\spt$ be a dilation invariant system of equations. If $\spt$ is $r$-regular (for some $r\in\N$), then there exists a finite set $F=F(\spt,r)\subset\N$ so that the following holds. If $\N=C_{1}\cup\cdots\cup C_{r+1}$ is an $(r+1)$-colouring which lacks monochromatic solutions to $\spt$, then each colour class $C_{i}$ must be a multiplicatively $F$-syndetic set.
\end{lemma}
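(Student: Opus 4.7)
The plan is to use the compactness principle to reduce $r$-regularity over $\N$ to $r$-regularity over a fixed finite set, and then show that this finite set serves as the required $F$ via a dilation argument.

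First, since $\spt$ is $r$-regular, by the compactness principle there exists a finite set $F\subset\N$ such that $\spt$ is $r$-regular over $F$. I claim this $F$ works. Suppose, for contradiction, that $\N=C_{1}\cup\cdots\cup C_{r+1}$ is an $(r+1)$-colouring lacking monochromatic solutions to $\spt$, yet some colour class, say $C_{r+1}$, fails to be multiplicatively $F$-syndetic. By definition of multiplicative $F$-syndeticity, there exists $n\in\N$ such that $(n\cdot F)\cap C_{r+1}=\emptyset$, meaning $n\cdot F\subseteq C_{1}\cup\cdots\cup C_{r}$.

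Now I would restrict the original colouring to $n\cdot F$; this gives an $r$-colouring of $n\cdot F$. Pulling this back through dilation by $n$ yields an $r$-colouring $\tilde{\chi}:F\to[r]$ defined by $\tilde{\chi}(x):=\chi(nx)$, where $\chi$ denotes the original $(r+1)$-colouring restricted to $n\cdot F$ (which takes values in $\{1,\dots,r\}$). Since $\spt$ is $r$-regular over $F$, there exists a $\tilde{\chi}$-monochromatic non-trivial solution $\vx\in F^{s}$ to $\spt$. By dilation invariance, $n\vx$ is a non-trivial solution to $\spt$ with entries in $n\cdot F\subseteq C_{1}\cup\cdots\cup C_{r}$, and by construction of $\tilde{\chi}$ the entries of $n\vx$ all share the same colour under $\chi$. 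This $\chi$-monochromatic solution contradicts our assumption that the $(r+1)$-colouring lacks monochromatic solutions.

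Since the argument is symmetric in the colour classes (the index $r+1$ played no special role), every $C_{i}$ must be multiplicatively $F$-syndetic, completing the proof. There is no real obstacle here beyond carefully unwinding definitions; the key conceptual point, already implicit in the discussion preceding Proposition \ref{propThickEqv}, is that a colour class failing to be $F$-syndetic is exactly the obstruction needed to find a dilated copy of $F$ inside the union of the remaining $r$ colour classes, whereupon the $r$-regularity over $F$ transfers to produce an unwanted monochromatic solution.
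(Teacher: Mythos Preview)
Your proof is correct and follows essentially the same approach as the paper: both use compactness to obtain a finite $F$ over which $\spt$ is $r$-regular, then observe that if some colour class fails to be multiplicatively $F$-syndetic, a dilated copy $n\cdot F$ lies in the union of the remaining $r$ colours, whereupon dilation invariance and $r$-regularity over $F$ yield a monochromatic solution. The only cosmetic difference is that the paper phrases this as a direct contrapositive while you frame it as a proof by contradiction.
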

\begin{proof}
	Since $\spt$ is $r$-regular, the compactness principle allows us to find a non-empty finite set $F=F(\spt,r)\subset\N$ such that $\spt$ is $r$-regular over $F$. By dilation invariance, in any colouring $\chi$ of $\N$, if there exists a set of the form $x\cdot F$ (with $x\in\N$) which receives at most $r$ distinct colours, then there exists a $\chi$-monochromatic solution to $\spt$ in $x\cdot F$. By contraposition we deduce that if $\N=C_{1}\cup\cdots\cup C_{r+1}$ is an $(r+1)$-colouring which lacks monochromatic solutions to $\spt$, then each colour class $C_{i}$ is a multiplicatively $F$-syndetic set.
\end{proof}

This lemma shows that when we are trying to prove that a given dilation invariant system $\spt$ is partition regular, we only need to consider colourings in which all of the colour classes are multiplicatively syndetic. Combining this with Corollary \ref{corPRsyn} allows us to prove Theorem \ref{thmHomPR}.

\begin{proof}[Proof of Theorem \ref{thmHomPR}]
	If $\spt$ is partition regular, then Corollary \ref{corPRsyn} implies that $\spt$ is partition regular over every multiplicatively syndetic set. In particular, $\spt$ has a non-trivial solution inside every multiplicatively syndetic set.
	
	Conversely, suppose $\spt$ is not partition regular. If $\spt$ is not $1$-regular, then $\spt$ has no non-trivial solutions in the multiplicatively syndetic set $\N$. Suppose then that $\spt$ is $1$-regular. By Lemma \ref{lemColInd}, there exists a finite colouring of $\N$ with no monochromatic non-trivial solutions to $\spt$ and with each colour class being a multiplicatively syndetic set. Therefore each colour class is a multiplicatively syndetic set which has no non-trivial solutions to $\spt$. 
\end{proof}

This result therefore reduces the task of establishing $r$-regularity over $\N$ for every $r\in\N$ to establishing solubility in every multiplicatively syndetic set. Whilst this may not immediately appear to be helpful, we can obtain Corollary \ref{corConsis} very easily from this new approach.

\begin{proof}[Proof of Corollary \ref{corConsis}]
	For each $k\in[s]$, let $m_{k}$ denote the number of variables appearing in the equations defining the system $\spt_{k}$. We can therefore define a dilation invariant system $\spt$ in $m=m_{1}+\cdots+m_{s}$ variables whose solutions are precisely tuples of the form $(\vx^{(1)},\dots,\vx^{(s)})$, where $\vx^{(k)}\in\Q^{m_{k}}$ is a solution to the system $\spt_{k}$. Since each $\spt_{i}$ is partition regular, it follows from Theorem \ref{thmHomPR} that $\spt$ is partition regular. This implies the desired result.
\end{proof}

\begin{remark}
	In the case that each $\spt_{i}$ is a partition regular linear homogeneous equation, the above result is an immediate consequence of Rado's Criterion \cite[Satz IV]{Rado}. Our proof shows that it is not necessary to utilise such a strong result.
\end{remark}

\subsection{Multiplicatively Piecewise Syndetic Sets}

By using encoding functions, one can show that for a non-empty finite set $F\subset\N$, a set $S\subseteq\N$ is multiplicatively $F$-syndetic if and only if
\begin{equation*}
\N=\bigcup_{t\in F}t^{-1}S,
\end{equation*}
where $t^{-1}S:=\{n\in\N:nt\in S \}$. Our proof of Proposition \ref{propSSPR} used this fact to show that regularity over $\N$ can be `lifted' to regularity over a multiplicatively syndetic set. However, we proved in Proposition \ref{propThickEqv} that regularity over $\N$ is equivalent to regularity over a multiplicatively thick set. This motivates the introduction of the following weaker form of syndeticity.

\begin{definition}[Multiplicatively piecewise syndetic set]
Let $F\subset\N$ be a non-empty finite set, and let $S\subseteq\N$. We say that $S$ is \emph{multiplicatively piecewise $F$-syndetic} if the set $\cup_{t\in F}\,(t^{-1}S)$ is a multiplicatively thick set. \newline
We call $S\subseteq\N$ \emph{(multiplicatively) piecewise syndetic} if $S$ is multiplicatively piecewise $F$-syndetic for some $F\subset\N$.
\end{definition}
Another way to view multiplicatively piecewise syndetic sets is through the following `partial encoding' formulation. Given a non-empty finite set $F\subset\N$ and a set $S\subseteq\N$, define a partial function\footnote{We use the partial function notation $f:A\nrightarrow B$ to mean that $f$ only defines a function on a (possibly empty) subset of $A$.} $\tau_{S;F}:\N\nrightarrow F$ by
\begin{equation}\label{eqnTauDef}
\tau_{S;F}(n):=\min\{t\in F:nt\in S \},
\end{equation}
for all $n\in\N$ for which the above quantity is defined. We refer to this partial function as the \emph{(partial) encoding function for $(S,F)$}. By the \emph{domain of $\tau_{S;F}$} we mean the set of all $n\in\N$ for which (\ref{eqnTauDef}) is defined. 

We remarked earlier that $S$ is multiplicatively $F$-syndetic if and only if $\tau_{S;F}$ is a total function, meaning that $\tau_{S;F}(n)$ is defined for all $n\in\N$. Similarly, we see that $S$ is multiplicatively piecewise $F$-syndetic if and only if the domain of $\tau_{S;F}$ is multiplicatively thick. 

This technique of identifying a multiplicatively syndetic set with its encoding function was the key idea in the proof of Proposition \ref{propSSPR}. A similar argument can be used to obtain the following analogous result.

\begin{proposition}[Piecewise syndetic sets contain PR configurations]\label{propPSSPR}
	Let $S\subseteq\N$ be a multiplicatively piecewise $F$-syndetic set, for some non-empty finite set $F\subset\N$. Let $\spt$ be a dilation invariant system of equations, and let $r\in\N$. If $\spt$ is $(|F|\cdot r)$-regular over $\N$, then $\spt$ is $r$-regular over $S$.
\end{proposition}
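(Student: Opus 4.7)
The plan is to mimic the proof of Proposition \ref{propSSPR}, but first use Proposition \ref{propThickEqv} to transfer the regularity hypothesis from $\N$ to the multiplicatively thick set that sits underneath the piecewise syndetic structure. By definition, the set
\begin{equation*}
D := \bigcup_{t\in F} t^{-1}S
\end{equation*}
is multiplicatively thick, and it is precisely the domain of the partial encoding function $\tau_{S;F}$. Since $\spt$ is dilation invariant and $(|F|\cdot r)$-regular over $\N$, Proposition \ref{propThickEqv} upgrades this to $(|F|\cdot r)$-regularity of $\spt$ over $D$.

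Now I would take an arbitrary $r$-colouring $\chi:S\to[r]$ and pull it back through the partial encoding function to build a product colouring $\tilde{\chi}:D\to F\times[r]$ defined by
\begin{equation*}
\tilde{\chi}(n) := \bigl(\tau_{S;F}(n),\,\chi(n\tau_{S;F}(n))\bigr),
\end{equation*}
which is well-defined since $n\in D$ means $n\tau_{S;F}(n)\in S$. This is a colouring of $D$ using $|F\times[r]| = |F|\cdot r$ colours. Applying $(|F|\cdot r)$-regularity over $D$, we extract a $\tilde{\chi}$-monochromatic non-trivial solution $\va=(a_1,\dots,a_s)\in D^s$. Monochromaticity forces all $a_i$ to share a common first coordinate $t\in F$ under $\tau_{S;F}$, so each $ta_i$ lies in $S$ and all of the values $\chi(ta_i)$ agree. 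Dilation invariance of $\spt$ then ensures that $t\va=(ta_1,\dots,ta_s)$ is a non-trivial solution to $\spt$, and by construction it is a $\chi$-monochromatic solution lying entirely inside $S$.

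There is no real obstacle here beyond bookkeeping: the only conceptual step beyond the proof of Proposition \ref{propSSPR} is recognising that the replacement of the total encoding function by a partial one is exactly compensated by the fact that its domain $D$ is multiplicatively thick, which is precisely the hypothesis needed to invoke Proposition \ref{propThickEqv}. The rest of the argument — the product colouring with $|F|\cdot r$ colours and the dilation by $t$ at the end — is identical to the syndetic case.
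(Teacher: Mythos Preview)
Your proof is correct and follows essentially the same approach as the paper's own proof: both identify the multiplicatively thick domain $D=\bigcup_{t\in F}t^{-1}S$ of the partial encoding function, invoke Proposition~\ref{propThickEqv} to transfer $(|F|\cdot r)$-regularity to $D$, and then use the product colouring $(\tau_{S;F}(n),\chi(n\tau_{S;F}(n)))$ together with dilation invariance to push a monochromatic solution from $D$ into $S$. The only differences are notational (the paper writes $T$ for your $D$) and in the order of exposition.
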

\begin{proof}
	Suppose $\chi:S\to[r]$ is an $r$-colouring of $S$. Since $S$ is multiplicatively piecewise syndetic, the set $T:=\cup_{t\in F}\,(t^{-1}\cdot S)$ is multiplicatively thick. Let $\tau:T\to F$ denote the encoding function given by
	\begin{equation*}
	\tau(n):=\min\{t\in F:nt\in S\}.
	\end{equation*}
	Now let $\tilde{\chi}:T\to F\times[r]$ be the product colouring given by
	\begin{equation*}
	\tilde{\chi}(n):=(\tau(n),\chi(n\tau(n))).
	\end{equation*}
	By Proposition \ref{propThickEqv}, we know that $\spt$ is $(|F|\cdot r)$-regular over $T$. Thus, we can find a $\tilde{\chi}$-monochromatic solution $\vx$ to $\spt$ such that every entry of $\vx$ is an element of $T$. From the definition of $\tilde{\chi}$, we can find $t\in F$ such that $\tau(x_{i})=t$ for every entry $x_{i}$ of $\vx$. The dilation invariance of $\spt$ then shows that $\vy:=t\vx$ is a $\chi$-monochromatic solution to $\spt$ whose entries $y_{i}=tx_{i}$ all lie in $S$.
\end{proof}

We end this section by synthesising all of our results relating partition regularity with solubility in multiplicatively syndetic sets into the following theorem.

\begin{theorem}[Summary of results]
	Suppose $\spt$ is a dilation invariant system of equations. Then the following are all equivalent:
	\begin{enumerate}[\upshape(I)]
		\item $\spt$ is partition regular (over $\N$);
		\item $\spt$ is partition regular over every multiplicatively thick set;
		\item $\spt$ is partition regular over every multiplicatively piecewise syndetic set;
		\item $\spt$ is partition regular over every multiplicatively syndetic set;
		\item $\spt$ has a solution in every multiplicatively piecewise syndetic set;
		\item $\spt$ has a solution in every multiplicatively syndetic set.
	\end{enumerate}
\end{theorem}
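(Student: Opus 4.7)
The plan is to assemble the six conditions into a short cycle of implications by drawing almost entirely on results already established in this section. The key structural observation is that there are inclusions
\[
\{\text{mult. thick}\}\subseteq\{\text{mult. piecewise syndetic}\}\supseteq\{\text{mult. syndetic}\},
\]
because a thick set is trivially multiplicatively piecewise $\{1\}$-syndetic, while a multiplicatively $F$-syndetic set $S$ satisfies $\bigcup_{t\in F}t^{-1}S=\N$, which is certainly multiplicatively thick. In particular, condition (III) concerns the widest class of sets and is morally the strongest of the six.

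First I would establish the cycle $\text{(I)}\Rightarrow\text{(III)}\Rightarrow\text{(IV)}\Rightarrow\text{(VI)}\Rightarrow\text{(I)}$. The implication (I)$\Rightarrow$(III) is Proposition \ref{propPSSPR}: given a multiplicatively piecewise $F$-syndetic set $S$ and a target number of colours $r$, the partition regularity of $\spt$ supplies $(|F|\cdot r)$-regularity, which Proposition \ref{propPSSPR} then upgrades to $r$-regularity of $\spt$ over $S$. The implication (III)$\Rightarrow$(IV) is immediate from the inclusion noted above. The implication (IV)$\Rightarrow$(VI) is trivial, by applying $r$-regularity with $r=1$. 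Finally, (VI)$\Rightarrow$(I) is exactly the content of Theorem \ref{thmHomPR}.

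Next I would insert (II) into the equivalence via (I)$\Rightarrow$(II), which follows by applying Proposition \ref{propThickEqv} separately for each $r\in\N$, and (II)$\Rightarrow$(I), which holds because $\N$ itself is multiplicatively thick. In the same spirit, (V) is sandwiched between (III) and (VI): (III)$\Rightarrow$(V) is trivial by considering the $1$-colouring, and (V)$\Rightarrow$(VI) follows from the observation that every multiplicatively syndetic set is multiplicatively piecewise syndetic.

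There is no serious obstacle here: the theorem is essentially a repackaging of Propositions \ref{propThickEqv} and \ref{propPSSPR} together with Theorem \ref{thmHomPR}. The only point requiring minor attention is the quantifier bookkeeping around the factor $|F|$ introduced by Proposition \ref{propPSSPR}, but since the hypothesis of (I) asserts $r$-regularity for every $r\in\N$, this factor is absorbed without difficulty.
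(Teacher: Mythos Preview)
Your proof is correct and follows essentially the same approach as the paper, assembling the equivalences from Proposition~\ref{propThickEqv}, Proposition~\ref{propPSSPR}, Theorem~\ref{thmHomPR}, and the inclusion of multiplicatively syndetic sets in multiplicatively piecewise syndetic sets. The only cosmetic difference is that you route (I)$\Rightarrow$(IV) through (III) via Proposition~\ref{propPSSPR}, whereas the paper invokes Proposition~\ref{propSSPR} separately for that link; your routing is marginally more economical but the content is the same.
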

\begin{proof}
	Proposition \ref{propThickEqv} establishes the equivalence (I)$\Leftrightarrow$(II). Proposition \ref{propSSPR} and Theorem \ref{thmHomPR} together show that (I)$\Leftrightarrow$(IV)$\Leftrightarrow$(VI). Similarly, we deduce from Proposition \ref{propPSSPR} that (I)$\Leftrightarrow$(III). Since syndeticity implies piecewise syndeticity, we see that (V)$\Rightarrow$(VI). Finally, since solubility is equivalent to $1$-regularity, we observe that (III)$\Rightarrow$(V).
\end{proof}

\section{Density of Multiplicatively Syndetic Sets}\label{secDens}

In Ramsey theory, there are multiple concepts of `largeness'. The most familiar of these is the notion of (asymptotic) density.

\begin{definition}[Asymptotic Density]
	Let $A\subseteq\N$. The \emph{upper (asymptotic) density $\udn(A)$} of $A$ is defined by
	\begin{equation*}
	\udn(A):=\limsup_{N\to\infty}\frac{|A\cap[N]|}{N}.
	\end{equation*}
	The \emph{lower (asymptotic) density $\ldn(A)$} of $A$ is defined by
	\begin{equation*}
	\ldn(A):=\liminf_{N\to\infty}\frac{|A\cap[N]|}{N}.
	\end{equation*}
	The \emph{natural (asymptotic) density $d(A)$} of $A$ is defined by
	\begin{equation*}
	d(A):=\lim_{N\to\infty}\frac{|A\cap[N]|}{N},
	\end{equation*}
	whenever the above limit exists, which occurs if and only if $\udn(A)=\ldn(A)$.
\end{definition}
One can generalise this definition to obtain a notion of asymptotic density for general cancellative, left amenable semigroups (see \cite{BG16extra} for further details). The above definition comes from the case where the semigroup in question is $(\N,+)$. As such, asymptotic density is a form of `additive largeness'.

Recent research has led to the surprising discovery that multiplicatively large sets, such as multiplicatively syndetic sets, are additively large. Bergelson \cite[Lemma 5.11]{BerCent} proved that multiplicatively syndetic sets are additively central, which implies that they have positive upper asymptotic density. However, due to the infinitary nature of central sets, no explicit bounds on the density of multiplicatively syndetic sets can be extracted from this result.

In their work on the partition regularity of non-linear equations, Chow, Lindqvist, and Prendiville \cite[Lemma 4.2]{CLP} independently proved that multiplicatively syndetic sets have positive (lower) asymptotic density. They obtained the following quantitative result (for the case $F=[M]$).

\begin{lemma}\label{lemSynDens}
	Let $F\subset\N$ be a non-empty finite set, and let $M$ denote the largest element of $F$. Then for any $N\in\N$ and any multiplicatively $F$-syndetic set $S\subseteq[N]$, we have
	\begin{equation}
	|S\cap[N]|\geqslant \frac{1}{|F|}\left\lfloor\frac{N}{M}\right\rfloor.
	\end{equation}
\end{lemma}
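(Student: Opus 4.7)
The plan is to exploit the encoding function $\tau = \tau_{S;F}: \N \to F$, which is a well-defined total function precisely because $S$ is multiplicatively $F$-syndetic. Recall that $\tau(n) = \min\{t \in F : nt \in S\}$, so by construction $n\tau(n) \in S$ for every $n \in \N$.

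The first step is to restrict attention to the initial segment $n \in [\lfloor N/M \rfloor]$. For such $n$, since $\tau(n) \in F \subseteq [M]$, we have
\begin{equation*}
n\tau(n) \leqslant n M \leqslant N,
\end{equation*}
so the map $\phi : n \mapsto n\tau(n)$ sends $[\lfloor N/M \rfloor]$ into $S \cap [N]$.

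The second step is to bound the multiplicity of $\phi$. Suppose $\phi(n) = m$ for some $m \in S \cap [N]$. Then $\tau(n)$ must be an element of $F$ that divides $m$, and $n = m/\tau(n)$ is determined by $\tau(n)$. Hence the fibre $\phi^{-1}(m)$ has at most $|F|$ elements, since there are at most $|F|$ choices for $\tau(n) \in F$. Combining these two observations via the pigeonhole estimate $|\text{domain}| \leqslant |F| \cdot |\phi(\text{domain})|$ gives
\begin{equation*}
\left\lfloor \frac{N}{M} \right\rfloor \leqslant |F| \cdot |S \cap [N]|,
\end{equation*}
which rearranges to the desired inequality. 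There is no real obstacle here; the only subtlety is ensuring $n\tau(n) \leqslant N$, which is why the range is truncated at $\lfloor N/M \rfloor$ and why the bound features the largest element $M$ of $F$ rather than $|F|$ alone.
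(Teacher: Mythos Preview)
Your proof is correct and essentially the same as the paper's: both restrict the encoding function $\tau$ to $[\lfloor N/M\rfloor]$ and apply a pigeonhole-type count. The only cosmetic difference is that the paper pigeonholes on the values of $\tau$ to find a single $t\in F$ with $|\tau^{-1}(t)|\geqslant |F|^{-1}\lfloor N/M\rfloor$ and then injects $\tau^{-1}(t)$ into $S$ via $x\mapsto tx$, whereas you work with the combined map $n\mapsto n\tau(n)$ and bound its fibres by $|F|$; these are equivalent repackagings of the same argument.
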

\begin{proof}
	Define an encoding function $\tau:[N/M]\to F$ for $S$ by
	\begin{equation*}
	\tau(x):=\min\{t\in F:tx\in S \}.
	\end{equation*}
	By the pigeonhole principle, there exists $t\in F$ such that $|\tau^{-1}(t)|\geqslant \frac{1}{|F|}|[N/M]|$. Thus,
	\begin{equation*}
	|S|\geqslant |\{tx:x\in\tau^{-1}(t)\}|\geqslant\frac{1}{|F|}\left\lfloor\frac{N}{M}\right\rfloor.
	\end{equation*}
\end{proof}

In fact, the density of multiplicatively syndetic sets had been studied much earlier. In 1977 Graham, Spencer, and Witsenhausen \cite{GrahamSyndy} determined the maximum asymptotic density for sets lacking linear configurations of the form $\{a_{1}x,a_{2}x,\dots,a_{s}x\}$. Taking complements enables one to determine the minimum density of a multiplicatively $F$-syndetic set for $F=\{a_{1},\dots,a_{s}\}$. After performing this reformulation, their result is as follows.

\begin{theorem}[{\cite[Theorem 2]{GrahamSyndy}}]
	Let $F\subset\N$ be a non-empty finite set. Let $P_{F}$ be the set of primes dividing elements of $F$. Let $\mathrm{S}(P_{F})$ denote the set of all \emph{$P_{F}$-smooth numbers}, meaning that $x\in\mathrm{S}(P_{F})$ if and only if every prime factor of $x$ lies in $P_{F}$. We write $\mathrm{S}(P_{F})=\{d_{1}<d_{2}<d_{3}<\dots\}$, where $d_{k}$ is the $k$th smallest element of $\mathrm{S}(P_{F})$. For each $k\in\N$, let
	\begin{equation*}
	g_{F}(k):=\min\{|X\cap \{d_{1},\dots,d_{k}\}|:X\subseteq\N\;\text{is multiplicatively $F$-syndetic}\}
	\end{equation*}
	Finally, let $\mathrm{K}(F)=\{k\in\N:g_{F}(k)=g_{F}(k-1)\}$. Then for any multiplicatively $F$-syndetic set $S\subseteq\N$, we have the sharp bound
	\begin{equation*}
	\ldn(S)\geqslant\delta_{\text{min}}(F):= 1-\prod_{p\in\mathrm{S}(P_{F})}(1-p^{-1})\sum_{k\in \mathrm{K}(F)}d_{k}^{-1}.
	\end{equation*}
\end{theorem}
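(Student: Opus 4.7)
The plan is to pass to the complement $A := \N \setminus S$, an $F$-free set, and exploit the multiplicative decomposition of $\N$ induced by $P_F$. Every positive integer factors uniquely as $n = md$ with $d \in \mathrm{S}(P_F)$ and $m$ coprime to every prime in $P_F$; since $F \subseteq \mathrm{S}(P_F)$, each dilate $a \cdot F$ lies inside a single fibre $m \cdot \mathrm{S}(P_F)$. Thus $A$ is $F$-free if and only if every slice $A_m := \{d \in \mathrm{S}(P_F) : md \in A\}$ is $F$-free in $\mathrm{S}(P_F)$. This reduces the problem to bounding $F$-free subsets of $\mathrm{S}(P_F)$ fibre by fibre.

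Next, I would analyse $g_F$ combinatorially. Since an optimal $F$-syndetic set at level $k - 1$ remains a valid competitor at level $k$ (adding the single element $d_k$ increases the count by at most one), the increments $g_F(k) - g_F(k-1)$ lie in $\{0, 1\}$. Telescoping gives $k - g_F(k) = |\mathrm{K}(F) \cap [k]|$, and the defining extremal problem is equivalent (via complementation inside $\{d_1, \dots, d_k\}$) to maximising $|Y|$ over $F$-free $Y \subseteq \mathrm{S}(P_F)$ supported on $\{d_1, \dots, d_k\}$. Consequently $|A_m \cap \{d_1, \dots, d_k\}| \leqslant |\mathrm{K}(F) \cap [k]|$ for every $m$ and $k$. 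Summing across fibres and swapping the order of summation,
\begin{equation*}
|A \cap [N]| \;\leqslant\; \sum_{m} |A_m \cap [N/m]| \;\leqslant\; \sum_{k \in \mathrm{K}(F)} \#\{m \leqslant N/d_k : \gcd(m,p)=1 \text{ for all } p \in P_F\},
\end{equation*}
where $m$ ranges over positive integers coprime to $P_F$. Dividing by $N$, the $k$th summand is dominated by $1/d_k$, and $\sum_{d \in \mathrm{S}(P_F)} d^{-1} = \prod_{p \in P_F}(1 - p^{-1})^{-1} < \infty$, so dominated convergence lets me interchange the limit $N \to \infty$ with the sum. Since the natural density of $P_F$-coprime integers equals $\prod_{p \in P_F}(1 - p^{-1})$, the resulting inequality
\begin{equation*}
\udn(A) \;\leqslant\; \prod_{p \in P_F}(1 - p^{-1}) \sum_{k \in \mathrm{K}(F)} d_k^{-1}
\end{equation*}
is exactly $\ldn(S) \geqslant \delta_{\text{min}}(F)$.

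To establish sharpness I would construct an explicit extremiser of the form $S^{\star} := \N \setminus \bigsqcup_m m \cdot E$, where $E \subseteq \mathrm{S}(P_F)$ is a single $F$-free set with $|E \cap \{d_1, \dots, d_k\}| = |\mathrm{K}(F) \cap [k]|$ for every $k$; the upper-bound calculation then runs in reverse to yield $\ldn(S^{\star}) = \delta_{\text{min}}(F)$. I expect the main obstacle to be producing this uniform extremiser: a priori the minimisers achieving $g_F(k)$ at different levels may disagree, so a compactness argument (or a greedy construction that, at each index $k$, adds $d_k$ to $E$ unless doing so would complete a dilate of $F$ already contained in $E$) is needed to exhibit one $E$ valid for all $k$ simultaneously. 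The technical heart of the problem is really this combinatorial construction on the multiplicative semigroup $\mathrm{S}(P_F)$, with the density reduction above serving to strip away the primes outside $P_F$.
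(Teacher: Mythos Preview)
The paper does not prove this theorem at all: it is quoted verbatim as \cite[Theorem 2]{GrahamSyndy} and serves only as background for the explicit construction of the sets $S(a,k)$ that follows. There is therefore no ``paper's own proof'' to compare your attempt against.

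That said, your sketch is the natural line of attack and is essentially the Graham--Spencer--Witsenhausen argument. The fibrewise decomposition $n = md$ with $d \in \mathrm{S}(P_F)$ and $m$ coprime to $P_F$, the observation that $F$-freeness passes to each slice $A_m$, the telescoping identity $k - g_F(k) = |\mathrm{K}(F) \cap [k]|$, and the dominated-convergence step are all correct (note, incidentally, that the product in the displayed formula should run over $p \in P_F$, not $p \in \mathrm{S}(P_F)$; this appears to be a typo in the statement, and you silently corrected it). One point worth tightening: the equivalence between minimising $|X \cap \{d_1,\dots,d_k\}|$ over $F$-syndetic $X \subseteq \N$ and maximising $|Y|$ over $F$-free $Y \subseteq \{d_1,\dots,d_k\}$ uses that any $F$-free finite set extends trivially to an $F$-free subset of $\N$ (itself), whose complement is $F$-syndetic; you should say this explicitly.

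You are right that the sharpness construction is where the work lies. Your greedy proposal is the correct instinct, but the claim that it achieves the maximum $|\mathrm{K}(F) \cap [k]|$ at \emph{every} level $k$ needs justification; a priori, greedily refusing to complete a dilate might force suboptimal choices later. The cleanest route is to observe that achieving equality at every level forces $E = \{d_k : k \in \mathrm{K}(F)\}$ exactly, and then to argue directly that this particular set is $F$-free --- which in turn reduces to a structural statement about $g_F$ that takes some care. This is indeed the ``technical heart'' you identified, and it is not resolved by compactness alone.
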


Graham, Spencer, and Witsenhausen remark that there are difficulties in evaluating $\delta_{\text{min}}(F)$ due to the complicated nature of the set $\mathrm{K}(F)$. In particular, they could not obtain an explicit evaluation for $\delta_{\text{min}}(F)$ in the case where $F=\{1,2,3\}$. Erd\H{o}s and Graham \cite{ErdosGrahamBra} subsequently conjectured that $\delta_{\text{min}}(\{1,2,3\})$ is irrational. This conjecture remains open (see \cite{ChungGrahamLin} for further details and developments related to this problem).

In the case where $F=\{1,p,p^{2},\dots,p^{k-1}\}$ for some prime $p$ and some $k\in\N$, Graham, Spencer, and Witsenhausen observed that $\delta_{\text{min}}(F)=\tfrac{p+1}{p^{k}-1}$. We now consider $F=\{1,a,a^{2},\dots,a^{k-1}\}$, where $a\in\N$ need not be prime, and explicitly construct a multiplicatively $F$-syndetic set of minimum density.

\begin{definition}[Multiplicity function]
	Let $a\geqslant 2$ be a positive integer. Define the \emph{$a$-multiplicity function} $\nu_{a}:\N\to\N\cup\{0\}$ by
	\begin{equation*}
		\nu_{a}(n)=\max\{k\in\N\cup\{0\}:a^{k}|n\}.
	\end{equation*}
\end{definition}

\begin{lemma}[The set $S(a,k)$]
	Let $a,k\in\N\setminus\{1\}$, and let $F=\{1,a,a^{2},\dots,a^{k-1}\}$. Let $S(a,k)\subseteq\N$ be defined by
	\begin{equation}\label{eqnSakDef}
	S(a,k):=\{n\in\N:\nu_{a}(n)\equiv k-1 \;(\bmod\; k)\}.
	\end{equation}
	Then $S(a,k)$ is a multiplicatively $F$-syndetic set and has natural density
	\begin{equation*}
	d\left(S(a,k)\right)=\frac{a-1}{a^{k}-1}.
	\end{equation*}
\end{lemma}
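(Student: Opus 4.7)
The plan is to verify the two assertions separately: multiplicative $F$-syndeticity by a direct calculation, and the density formula by partitioning $S(a,k)$ into level sets of the $a$-multiplicity function and summing a geometric series.

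For the syndeticity, I would first record the identity $\nu_{a}(n \cdot a^{i}) = \nu_{a}(n) + i$ for every $n \in \N$ and every integer $i \geq 0$; this holds because if we write $n = a^{\nu_{a}(n)}u$ with $a \nmid u$, then $n \cdot a^{i} = a^{\nu_{a}(n)+i} u$ and still $a \nmid u$. Note that this identity uses crucially that we multiply by a power of $a$ itself (it would fail for a general divisor of $a^i$ when $a$ is composite). Given an arbitrary $n \in \N$, let $j \in \{0, 1, \dots, k-1\}$ denote the residue of $\nu_{a}(n)$ modulo $k$ and take $t := a^{k-1-j} \in F$. Then $\nu_{a}(nt) = \nu_{a}(n) + (k-1-j) \equiv k-1 \pmod{k}$, so $nt \in S(a,k)$, proving that $S(a,k)$ is multiplicatively $F$-syndetic.

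For the density, I would partition $S(a,k)$ according to the value of $\nu_{a}$. Setting $A_{m} := \{n \in \N : \nu_{a}(n) = m\} = a^{m}\N \setminus a^{m+1}\N$, we have
\[
S(a,k) = \bigsqcup_{j=0}^{\infty} A_{(k-1)+jk},
\]
and a direct count gives $|A_{m} \cap [N]| = \lfloor N/a^{m} \rfloor - \lfloor N/a^{m+1} \rfloor$. Only $O(\log N)$ of these sets intersect $[N]$ nontrivially, so replacing $\lfloor x \rfloor$ by $x$ introduces total error $O(\log N)$. Dividing by $N$ and sending $N \to \infty$ reduces the calculation to the geometric series
\[
\sum_{j=0}^{\infty} \bigl(a^{-(k-1)-jk} - a^{-k-jk}\bigr) = (a-1) \sum_{j=1}^{\infty} a^{-jk} = \frac{a-1}{a^{k}-1}.
\]

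The argument is entirely elementary and I do not anticipate any genuine obstacle. The only points that require care are the verification of the multiplicativity identity $\nu_{a}(n \cdot a^{i}) = \nu_{a}(n) + i$, which is immediate from the definition provided one multiplies specifically by a power of $a$, and the control of the truncation error in the density computation, which is trivial because the tail of the geometric series decays at rate $a^{-k}$.
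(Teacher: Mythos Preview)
Your proposal is correct and follows essentially the same approach as the paper. Both arguments establish syndeticity via the identity $\nu_{a}(na^{i})=\nu_{a}(n)+i$ and compute the density by decomposing $S(a,k)$ into the level sets of $\nu_{a}$ and summing a geometric series; the only cosmetic difference is that the paper bounds the lower and upper densities separately via finite unions $B_{r}$ and the tail inclusion $S(a,k)\setminus B_{r}\subseteq a^{kr-1}\cdot\N$, whereas you handle the truncation in one stroke with an $O(\log N)$ error term.
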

\begin{proof}
	By noting that $\nu_{a}(an)=\nu_{a}(n)+1$, we see that, for any $n\in\N$, the set $\nu_{a}(n\cdot F)$ is a complete residue system modulo $k$. Thus, $S(a,k)$ is a multiplicatively $F$-syndetic set. 
	
	It only remains to check that $S(a,k)$ achieves the required density bound.
	For each $m\in\N$, let $A_{m}=\{n\in\N:\nu_{a}(n)=km-1 \}$. Hence,
	\begin{equation*}
		S(a,k)=\bigcup_{m\in\N}A_{m}.
	\end{equation*}
	Observe that $n\in A_{m}$ holds if and only if $n\equiv a^{km-1}b\;(\bmod\; a^{km})$ for some $b\in\{1,2,\dots,a-1\}$. We therefore deduce that $A_{m}$ has natural density
	\begin{equation*}
	d(A_{m})=\frac{a-1}{a^{km}}.
	\end{equation*} 
	For each $r\in\N$, let $B_{r}=\cup_{m=1}^{r}A_{m}$.
	Since $A_{i}$ and $A_{j}$ are disjoint for all $i\neq j$, we deduce from the finite additivity of natural density that
	\begin{equation*}
		d(B_{r})=\sum_{m=1}^{r}d(A_{m})=(a-1)\sum_{m=1}^{r}a^{-km}=(a-1)\frac{1-a^{-rk}}{a^{k}-1}.
	\end{equation*}
	By noting that $B_{r}\subseteq S(a,k)$, we deduce that $\ldn\left( S(a,k)\right)\geqslant d(B_{r})$ for all $r\in\N$. Taking $r\to\infty$ gives the lower bound $\ldn(S(a,k))\geqslant(a-1)/(a^{k}-1)$.
	
	We now compute an upper bound. First observe that
	\begin{equation*}
	S(a,k)\setminus B_{r}\subseteq a^{kr-1}\cdot\N
	\end{equation*}
	for all $r\in\N$. Thus,
	\begin{equation*}
		\udn(S(a,k))\leqslant \udn(B_{r})+\udn(a^{kr-1}\cdot\N)=d(B_{r})+\frac{1}{a^{kr-1}}
	\end{equation*}
	holds for all $r\in\N$. Taking $r\to\infty$ gives the desired upper bound.
\end{proof}

We now show that $S(a,k)$ has minimal density.

\begin{theorem}[Minimal $\{1,a,\dots,a^{k-1}\}$-syndetic set]
	Let $a,k\in\N\setminus\{1\}$, and let $F=\{1,a,a^{2},\dots,a^{k-1}\}$. Let $S(a,k)$ be the set defined in (\ref{eqnSakDef}). Let $N\in\N$. If $X\subseteq\N$ is a multiplicatively $F$-syndetic set, then
	\begin{equation*}
	|X\cap[N]|\geqslant|S(a,k)\cap [N]|.
	\end{equation*}
\end{theorem}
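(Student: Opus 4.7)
The plan is to partition $\N$ into disjoint ``$a$-chains'' and verify the inequality on each chain separately. Every $n\in\N$ has a unique representation $n=a^{i}m$ with $i=\nu_{a}(n)\geqslant 0$ and $a\nmid m$. For each $m\in\N$ with $a\nmid m$, let
\begin{equation*}
C_{m}:=\{m,am,a^{2}m,\dots\},
\end{equation*}
so that $\N=\bigsqcup_{a\nmid m}C_{m}$. Since both the multiplicative $F$-syndeticity condition and the defining condition of $S(a,k)$ are expressed in terms of $\nu_{a}$, the problem decomposes cleanly along these chains.

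Fix such an $m$ with $a\nmid m$, and write $J_{m}:=\lfloor\log_{a}(N/m)\rfloor$, so that $C_{m}\cap[N]=\{a^{i}m:0\leqslant i\leqslant J_{m}\}$. Encode $X$ by its ``index set on $C_{m}$'',
\begin{equation*}
I_{m}:=\{i\geqslant 0:a^{i}m\in X\}.
\end{equation*}
Applying the $F$-syndetic hypothesis to $n=a^{j}m$ for each $j\geqslant 0$ (and using $F=\{1,a,\dots,a^{k-1}\}$) shows that
\begin{equation*}
\{j,j+1,\dots,j+k-1\}\cap I_{m}\neq\emptyset\qquad\text{for every }j\geqslant 0.
\end{equation*}

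Next I would partition $\{0,1,\dots,J_{m}\}$ into consecutive blocks of length $k$ starting from $0$ (allowing a possibly shorter final block), giving $q_{m}:=\lfloor(J_{m}+1)/k\rfloor$ full blocks. Applying the syndetic condition at $j=bk$ for each full block index $b\in\{0,1,\dots,q_{m}-1\}$ shows that every full block meets $I_{m}$, hence
\begin{equation*}
|X\cap C_{m}\cap[N]|=|I_{m}\cap\{0,\dots,J_{m}\}|\geqslant q_{m}.
\end{equation*}
On the other hand, since $a\nmid m$ we have $\nu_{a}(a^{i}m)=i$, so $a^{i}m\in S(a,k)$ iff $i\equiv k-1\pmod{k}$. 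Counting the indices $i\in\{k-1,2k-1,3k-1,\dots\}$ in $\{0,\dots,J_{m}\}$ gives exactly $q_{m}$, so
\begin{equation*}
|S(a,k)\cap C_{m}\cap[N]|=q_{m}.
\end{equation*}
Summing over all $m\leqslant N$ with $a\nmid m$ yields $|X\cap[N]|\geqslant|S(a,k)\cap[N]|$.

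There is no serious obstacle here; the content of the argument is the observation that both $S(a,k)$ and the $F$-syndeticity constraint factor through $\nu_{a}$, reducing the problem on each chain to the one-dimensional covering fact that any subset of $\{0,1,\dots,J_{m}\}$ hitting every window of $k$ consecutive integers must meet every one of the $q_{m}$ disjoint initial blocks of length $k$. The set $S(a,k)$ is extremal because it places its index in the rightmost slot of each block, which is the latest possible position compatible with the constraint.
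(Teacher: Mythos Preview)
Your argument is correct. The chain decomposition is sound, the syndetic condition applied at $j=bk$ indeed yields an index of $I_m$ in the full block $[bk,bk+k-1]\subseteq\{0,\dots,J_m\}$ for each $b<q_m$, and the count $|S(a,k)\cap C_m\cap[N]|=q_m$ is exactly right since the relevant indices are $k-1,2k-1,\dots,q_mk-1$.

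Your route differs from the paper's, though both rest on the same underlying observation. The paper builds a single global injection $g:S(a,k)\cap[N]\to X\cap[N]$ by sending $n\in S(a,k)\cap[N]$ to $a^{-(k-1)}n\cdot t(n)$, where $t(n)\in F$ is chosen by syndeticity applied at $a^{-(k-1)}n$; injectivity is then checked via $\nu_a$ modulo $k$. You instead decompose $[N]$ into the $a$-chains $C_m$ and count on each chain separately, avoiding any injectivity verification. The two arguments are closely related (the paper's map $g$ is in fact chain-preserving, and on each chain it sends the rightmost slot of a block to the hit guaranteed by syndeticity at the leftmost slot), but your blockwise counting is arguably more transparent: it makes explicit that the content is the one-dimensional fact that a set meeting every length-$k$ window must meet each of the $q_m$ disjoint initial blocks. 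The paper's injection packaging, on the other hand, gives a cardinality inequality without ever naming the chains.
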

\begin{proof}
	We may assume that $N\geqslant a^{k-1}$, since otherwise $S(a,k)\cap [N]=\emptyset$ and the result is vacuously true. Let $X\subseteq\N$ be a multiplicatively $F$-syndetic set. Let $m\in S(a,k)\cap[N]$. Since every element of $S(a,k)$ is divisible by $a^{k-1}$, we deduce that $(a^{-(k-1)}m)\cdot F\subseteq[N]$. As $X$ is multiplicatively $F$-syndetic, we can find some $t(m)\in F$ such that $a^{-(k-1)}mt(m)\in X$. We can therefore define a function $g:S(a,k)\cap[N]\to X$ by $g(n)=a^{-(k-1)}nt(n)$.
	To complete the proof it is sufficient to show that $g$ is an injective function. 
	
	Suppose that $n,n'\in S(a,k)$ satisfy $g(n)=g(n')$. Thus, $nt(n)=n't(n')$. Applying $\nu_{a}$ to this equation and then reducing modulo $k$ gives
	\begin{equation*}
	\nu_{a}(t(n))\equiv\nu_{a}(t(n'))\;(\bmod\; k).
	\end{equation*}
	Now observe that the function which maps $t\in F$ to the residue class of $\nu_{a}(t)$ modulo $k$ is injective. This shows that $t(n)=t(n')$, which implies that $n=n'$.
\end{proof}

\section{A Syndetic Density Increment Strategy}\label{secBrau}

Brauer \cite{Bra28} established the following common generalisation of Schur's theorem and van der Waerden's theorem.\\

\noindent\textbf{Brauer's Theorem:} \emph{For all $k,r\in\N$, there exists $N_{0}=N_{0}(k,r)\in\N$ such that, in any $r$-colouring $[N_{0}]=C_{1}\cup\cdots\cup C_{r}$ of $[N_{0}]=\{1,2,\dots,N_{0}\}$, there is a colour class $C_{i}$ containing a set of the form}
\begin{equation}
\{x,d,x+d,x+2d,\dots,x+(k-1)d\}.
\end{equation}

\noindent In this section we study the case where $k=3$. This corresponds to configurations of the form
\begin{equation}
\{x,d,x+d,x+2d\}.\label{eqnBrauer}
\end{equation}
We refer to such configurations as \emph{Brauer configurations (of length $3$)}. These are three term arithmetic progressions along with their common difference. Alternatively, one can view Brauer configurations as being solutions $\{x,y,z,d\}$ to the following dilation invariant system of equations:
\begin{align*}
x-2y+z=0;\\
x-y+d=0.
\end{align*}

As shown by Theorem \ref{thmHomPR}, Brauer's theorem is equivalent to the assertion that all multiplicatively syndetic sets contain Brauer configurations. In this section, we derive quantitative bounds on the minimal $N\in\N$ for which the set $S\cap[N]$ must contain a Brauer configuration, for a given multiplicatively syndetic set $S$. The main theorem of this section is as follows.

\begin{theorem}[Syndetic Brauer]\label{thmSynDensInc}
	Let $M\geqslant 2$ be a positive integer. Let $A\subseteq[N]$ be such that $|A|\geqslant\delta N$ for some $0<\delta\leqslant 1/2$. Let $S\subseteq\N$ be a multiplicatively $[M]$-syndetic set. If there does not exist a $3$-term arithmetic progression in $A$ with common difference in $S$, then
	\begin{equation}\label{eqnMainNbd}
	\log\log N\ll \log(\delta^{-1})\log(M/\delta).
	\end{equation}
\end{theorem}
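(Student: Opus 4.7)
The proof is a Roth-style density-increment argument adapted to the syndetic setting. The core claim is a single-step increment lemma: if $A\subseteq[N]$ has density $\delta\leq 1/2$ and contains no 3-AP with common difference in $S$, then there exists an arithmetic progression $P\subseteq[N]$ of length $|P|\geq N^{c\delta/M}$ on which $|A\cap P|\geq\delta(1+c)|P|$, for some absolute constant $c>0$. The bound in the theorem then follows by iterating this lemma.

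To establish the increment lemma, I would embed $[N]$ in $\Z/p\Z$ for a prime $p\asymp N$ and analyse the count
$$T=\BigE_{x,d}\,1_A(x)\,1_A(x+d)\,1_A(x+2d)\,1_S(d).$$
By Lemma \ref{lemSynDens} we have $|S\cap[N/M]|\gg N/M^2$, so the heuristic main term of $T$ is of order $\delta^3/M^2$; the no-configuration hypothesis forces this to be cancelled by an error term, which a linear Fourier analysis on the $x$-variable converts into a large nonzero Fourier coefficient of $1_A$. The main subtlety is that $1_S$ has no useful additive structure and cannot be Fourier-analysed in the $d$-variable directly. Here I would invoke the Chow-Lindqvist-Prendiville induction-on-colours device: the encoding function $\tau:=\tau_{S;[M]}\colon\N\to[M]$ partitions $\N$ into $M$ classes, and a pigeonhole in the $d$-variable fixes a colour $t\in[M]$ for which $B_t:=\tau^{-1}(t)\cap[N/M]$ satisfies $|B_t|\gg N/M^2$. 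Since $tB_t\subseteq S$ and the 3-AP equation is dilation invariant, the problem transfers to finding a 3-AP in the dilated set $A\cap t\Z$ on $[N/t]$ with common difference in $B_t$. The $L^2$ mass of $B_t$ now suffices to drive the standard Roth computation, yielding a Fourier coefficient of $1_A$ of size $\gg\delta^2/M$ at some nonzero frequency, and a Bohr-to-progression step produces the claimed AP.

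Iterating the one-step lemma from $(\delta_0,N_0)=(\delta,N)$ produces a sequence $(\delta_k,N_k)$ with $\delta_{k+1}\geq(1+c)\delta_k$ and $\log N_{k+1}\geq(c\delta_k/M)\log N_k$, whence $\log\log N_{k+1}\geq\log\log N_k-\log(M/\delta_k)$. The density can double only $\ll\log\delta^{-1}$ times before exceeding $1$, and $\log\log N$ decreases by $\ll\log(M/\delta)$ at each step, so the iteration can survive only when $\log\log N\gg\log\delta^{-1}\cdot\log(M/\delta)$, which is precisely the claimed bound (\ref{eqnMainNbd}).

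The main obstacle is executing the CLP colour pigeonhole cleanly: one must verify that after passing to the colour class $\tau^{-1}(t)$ and dilating, the inherited density of $A$ loses only an $O(1)$ factor, that the AP produced by the Bohr argument has length losing at most a factor $M$ compared to $N$, and that each iteration costs at most $O(\log(M/\delta))$ in $\log\log N$. Making the Fourier-analytic step and the colour pigeonhole compatible at each stage — so that the two losses combine multiplicatively inside a single $\log(M/\delta)$ factor rather than compounding — is the crux of the argument.
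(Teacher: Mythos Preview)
Your iteration scheme is essentially what the paper does, but the single-step increment lemma cannot be obtained by linear Fourier analysis as you propose; this is a genuine gap.

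The operator you must control is $\Lambda_{S}(f_{1},f_{2},f_{3})=\E_{x}\E_{d\in S}f_{1}(x)f_{2}(x+d)f_{3}(x+2d)$, and the weight $1_{S}(d)$ on the common difference raises the Cauchy--Schwarz complexity of the system $\{x,x+d,x+2d,d\}$ from $1$ to $2$. Concretely, take $f_{1}(x)=e_{p}(\alpha x^{2})$, $f_{2}(x)=e_{p}(-2\alpha x^{2})$, $f_{3}(x)=e_{p}(\alpha x^{2})$: then $f_{1}(x)f_{2}(x+d)f_{3}(x+2d)=e_{p}(2\alpha d^{2})$ is independent of $x$, so $\Lambda_{S}(f_{1},f_{2},f_{3})=\E_{d\in S}e_{p}(2\alpha d^{2})$ can be of order $1$ even though every $\lVert f_{i}\rVert_{U^{2}}$ is $p^{-1/4}$. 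Hence a small $U^{2}$-norm (equivalently, no large nonzero Fourier coefficient) of $1_{A}-\delta$ does \emph{not} force $\Lambda_{S}(1_{A})\approx\delta^{3}\Lambda_{S}(1_{[N]})$, and ``a linear Fourier analysis on the $x$-variable'' cannot produce the increment you claim. Your CLP pigeonhole does not help here: after restricting to $d\in tB_{t}$ you have merely replaced one arbitrary weight $1_{S}$ by another arbitrary weight $1_{tB_{t}}$, and the complexity is unchanged. (Your ``transfer to $A\cap t\Z$'' is also off: only the common difference is forced into $t\cdot B_{t}$, there is no reason to restrict $x$ to multiples of $t$.)

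The paper deals with this by proving (Lemma~\ref{lemLamControl}) that one Cauchy--Schwarz in $d$ removes $1_{S}$ at the cost of passing from $U^{2}$ to $U^{3}$:
\[
|\Lambda_{S}(f_{1},f_{2},f_{3})|\leqslant (p/|S|)^{1/2}\min_{k}\lVert f_{k}\rVert_{U^{3}}.
\]
The increment step then imports the Green--Tao quadratic machinery wholesale: a $U^{3}$ Koopman--von Neumann theorem (Theorem~\ref{BB1}) to approximate $1_{A}$ by a function measurable with respect to a quadratic factor, a density increment on an atom of that factor (Corollary~\ref{cor5.8}), and finally the linearisation of quadratic Bohr atoms into arithmetic progressions (Theorem~\ref{BB2}). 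This yields a progression of length $N^{c\delta^{C}M^{-C}}$ carrying density $(1+c)\delta$, which feeds the same iteration you describe. The encoding function and the CLP colour argument are used in the paper only at the level of iterating the increment (passing from $S$ to $d^{-1}S$) and in bounding $|S\cap[N/3]|$, not to lower the Fourier complexity.
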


\begin{remark}
	We impose the restriction $\delta\leqslant 1/2$ to ensure that $\delta$ is bounded away from $1$. One could replace $1/2$ by any positive quantity strictly less than $1$ at the cost of increasing the implicit constant in (\ref{eqnMainNbd}).
\end{remark}

In \cite{GT09}, Green and Tao used a density increment strategy over quadratic factors to obtain new bounds for the sizes of subsets of $[N]$ lacking $4$-term arithmetic progressions. In this section we combine their methods with the induction on colours argument (Lemma \ref{lemColInd}) of Chow, Lindqvist, and Prendiville \cite{CLP} to prove Theorem \ref{thmSynDensInc}.

By incorporating the density bounds obtained in \S\ref{secDens}, we can use Theorem \ref{thmSynDensInc} to prove Theorem \ref{thmSynBrau}.

\begin{proof}[Proof of Theorem \ref{thmSynBrau} given Theorem \ref{thmSynDensInc}]
	Let $c>0$ be a small positive constant to be specified later, and assume that $M,N\in\N$ satisfy
	\begin{equation*}
	2\leqslant M\leqslant\exp\left(c\sqrt{\log\log N} \right). 
	\end{equation*}
	Let $S\subseteq\N$ be a multiplicatively $F$-syndetic set, for some non-empty $F\subseteq[M]$. By taking $c$ sufficiently small, we may assume that $N\geqslant M$. This implies that the density of $S\cap[N]$ in $[N]$ is positive. Moreover, by Lemma \ref{lemSynDens}, we can choose a subset $S'\subseteq S$ such that the density $\delta$ of $S'\cap[N]$ in $[N]$ satisfies both of the bounds $0<\delta\leqslant 1/2$ and $\delta^{-1}\ll M^{2}$. This gives
	\begin{equation*}
	\log(\delta^{-1})\log(M/\delta)\ll \log^{2} M.
	\end{equation*}
	Hence, by choosing $c$ to be sufficiently small, we can ensure that (\ref{eqnMainNbd}) does not hold. Thus, by taking $A=S'\cap[N]$, we conclude from Theorem \ref{thmSynDensInc} that $S\cap[N]$ contains a Brauer configuration.
\end{proof}

For each $r\in\N$, define the \emph{$r$ colour Brauer number} $\rB(r)$ to be the minimum $N\in\N$ such that any $r$-colouring of $[N]$ yields a monochromatic Brauer configuration. In other words, $\rB(r)$ is the minimal value that $N_{0}(3,r)$ may take in the statement of Brauer's theorem. We can use Theorem \ref{thmSynDensInc} to obtain the following recursive bound for these numbers.

\begin{theorem}[Recursive bound for $\rB(r)$]\label{thmMain}
	For each $r\in\N$, we have
	\begin{equation}\label{eqnMainRecbd}
	\rB(r+1)\leqslant 2^{\rB(r)^{O(\log(r+1))}}.
	\end{equation}
\end{theorem}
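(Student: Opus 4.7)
The plan is to combine the induction on colours principle from Lemma \ref{lemColInd} with the syndetic density bound of Theorem \ref{thmSynDensInc}. The key observation is that a monochromatic Brauer configuration inside a colour class $C$ is exactly a $3$-term arithmetic progression in $C$ whose common difference also lies in $C$; we will exploit this by taking $A = S = C$ in Theorem \ref{thmSynDensInc}.

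Suppose $N < \rB(r+1)$ and fix an $(r+1)$-colouring $[N] = C_1\cup\cdots\cup C_{r+1}$ with no monochromatic Brauer configuration. Set $M := \rB(r)$. Since the Brauer system is $r$-regular over $[M]$, the dilation invariance argument used in the proof of Lemma \ref{lemColInd} forces every dilated interval $x\cdot[M]$ with $x\leqslant N/M$ to meet every colour class. Extending each $C_i$ by the integers exceeding $N$ therefore produces a multiplicatively $[M]$-syndetic set $\tilde{C}_i \subseteq \N$. By pigeonhole there is a class $C_i$ with density $\delta := |C_i|/N \geqslant 1/(r+1)$; after a minor adjustment (either by selecting a different class, or by invoking the remark after Theorem \ref{thmSynDensInc} to relax the bound $\delta \leqslant 1/2$), we may assume the density hypothesis of Theorem \ref{thmSynDensInc} is satisfied.

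Now apply Theorem \ref{thmSynDensInc} with $A = C_i$ and $S = \tilde{C}_i$. Any $3$-term progression $\{x, x+d, x+2d\}$ in $A$ with $d \in S$ has $d \leqslant N$ (since $A \subseteq [N]$), hence $d \in \tilde{C}_i \cap [N] = C_i$, producing a monochromatic Brauer configuration in $C_i$. Since no such configuration exists, Theorem \ref{thmSynDensInc} yields
\begin{equation*}
    \log\log N \ll \log(\delta^{-1})\,\log(M/\delta) \ll \log(r+1)\,\log\bigl((r+1)\rB(r)\bigr) \ll \log(r+1)\,\log \rB(r),
\end{equation*}
where the last step uses $\rB(r) \geqslant r+1$. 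Exponentiating twice gives $N \leqslant 2^{\rB(r)^{O(\log(r+1))}}$, which is the required recursion. The only substantive step is recognising that Lemma \ref{lemColInd} supplies precisely the multiplicatively syndetic structure needed by Theorem \ref{thmSynDensInc}, and the sole technical wrinkle is extending $C_i$ to a genuinely $\N$-syndetic set so that Theorem \ref{thmSynDensInc} can be applied verbatim; both are essentially bookkeeping issues rather than real obstacles.
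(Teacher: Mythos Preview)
Your proposal is correct and follows essentially the same route as the paper's own proof: set $M=\rB(r)$, use the induction-on-colours observation to make each colour class (extended by $\N\setminus[N]$) multiplicatively $[M]$-syndetic, apply Theorem~\ref{thmSynDensInc} with $A=C_i$ and $S=\tilde{C}_i$ to the densest class, simplify via $\rB(r)\geqslant r+1$, and exponentiate twice. The only cosmetic difference is that the paper fixes $\delta=(r+1)^{-1}$ from the outset (automatically ensuring $\delta\leqslant 1/2$), whereas you take $\delta=|C_i|/N$ and then adjust; both are fine.
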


\noindent By some computation (which is given in Appendix \ref{secTowerBd}), this theorem leads to the tower type bound in Theorem \ref{thmBrauerBound}.

\begin{proof}[Proof of Theorem \ref{thmMain} given Theorem \ref{thmSynDensInc}]
	Let $r\in\N$. Let $M:=\rB(r)$, and let $\delta:=(r+1)^{-1}$. Note that $M\geqslant\rB(1)=3$. Suppose $N\in\N$ is such that $N<\rB(r+1)$. Therefore, we have an $(r+1)$-colouring $[N]=C_{1}\cup\cdots\cup C_{r+1}$ with no monochromatic sets of the form (\ref{eqnBrauer}). By dilation invariance and our choice of $M$, it follows that there cannot exist a set of the form $x\cdot[M]\subseteq[N]$ which is $r$-coloured. This implies that $C_{i}\cup(\N\setminus[N])$ is multiplicatively $[M]$-syndetic for all $i\in[r]$.
	
	Without loss of generality, assume that $C_{1}$ is the largest colour class. By the pigeonhole principle, we observe that $|C_{1}|\geqslant \delta N$. Hence, by taking $A=C_{1}$ and $S=C_{1}\cup(\N\setminus[N])$ in the statement of Theorem \ref{thmSynDensInc}, we deduce that
	\begin{equation*}
	\log\log \rB(r+1)\ll \log(r+1)\log\left((r+1)\rB(r)\right).
	\end{equation*}
	By noting that $\rB(r)\geqslant r+1$ (since one can $r$-colour $[r]$ so that each element has a unique colour), this gives
	\begin{equation*}
	\log\log \rB(r+1)\ll \log(r+1)\log(\rB(r)).
	\end{equation*}
	Exponentiating twice then gives (\ref{eqnMainRecbd}).
\end{proof}

\subsection{Norms}

Given a non-empty finite set $A$ and a function $f:A\to\C$, we define the expectation $\E_{A}(f)$ of $f$ over $A$ by
\begin{equation*}
\mathbb{E}_{A}(f)=\E_{x\in A}(f):=\frac{1}{|A|}\sum_{x\in A}f(x).
\end{equation*}
The functions we encounter in this section are usually defined on $[N]$ or $\Z/p\Z$, where $N\in\N$, and $p\in\N$ is a prime. When $p>N$, it is convenient to consider $[N]$ as a subset of the field $\Z/p\Z$ by reducing modulo $p$. Given a function $f:\Z\to\C$ which is supported on $[N]$, we can then consider $f$ as a function defined on $\Z/p\Z$ by taking $f(x)=0$ for all $x\in(\Z/p\Z)\setminus[N]$.

We make use of two different types of norms. The standard $L^{p}$ norms are used to measure the overall size of a function, whilst the Gowers uniformity $U^{s}$ norms (introduced in \cite[Lemma 3.9]{Gow01}) measure the degree to which a function exhibits non-uniformity.

\begin{definition}[$L^{p}$ norms]
	Let $A$ be a set and let $f:A\to\C$ be a finitely supported function.
	The \emph{$L^{1}$ norm} $\lVert f\rVert_{L^{1}(A)}$ of $f$ is defined by
	\begin{equation*}
	\lVert f\rVert_{L^{1}(A)}:=\E_{x\in A}|f(x)|.
	\end{equation*}
	The \emph{$L^{\infty}$ norm} $\lVert f\rVert_{L^{\infty}(A)}$ of $f$ is defined by
	\begin{equation*}
	\lVert f\rVert_{L^{\infty}(A)}:=\max_{x\in A}|f(x)|.
	\end{equation*}
	We say that $f$ is \emph{$1$-bounded} (on $A$) if $\lVert f\rVert_{L^{\infty}(A)}\leqslant 1$.\\
\end{definition}

\begin{definition}[$U^{s}$ norms]
	Let $f:\Z/p\Z\to\C$. For each $s\geqslant 1$, the \emph{$U^{s}$ norm}\footnote{For $s\geqslant 2$ the $U^{s}$ norms are indeed norms (see \cite[Chapter 11]{TV06}), however the $U^{1}$ `norm' is only a seminorm.} $\lVert f\rVert_{U^{s}(\Z/p\Z)}$ of $f$ is defined by
	\begin{equation}\label{eqnFinGowNorm}
	\lVert f\rVert_{U^{s}(\Z/p\Z)} :=\left(\E_{x,h_{1},h_{2},\dots,h_{s}\in\Z/p\Z}\,\Delta_{h_{1},\dots,h_{s}}f(x)\right) ^{1/2^{s}},
	\end{equation}
	where the \emph{difference operators} $\Delta_{h_{1},\dots,h_{s}}$ are defined by
	\begin{equation*}
	\Delta_{h}f(x):=f(x)\overline{f(x+h)}
	\end{equation*}
	and
	\begin{equation*}
	\Delta_{h_{1},\dots,h_{s}}f:=\Delta_{h_{1}}\Delta_{h_{2}}\cdots\Delta_{h_{s}}f.
	\end{equation*}
\end{definition}
\noindent The Gowers uniformity norms can also be defined recursively. By expanding and rearranging (\ref{eqnFinGowNorm}), we observe that
\begin{equation}\label{eqnRecGowDef}
\lVert f\rVert_{U^{s+1}}^{2^{s+1}}=\E_{h\in\Z/p\Z}\lVert\Delta_{h}f\rVert_{U^{s}}^{2^{s}}.
\end{equation}

\subsection{Counting Brauer Configurations}

To prove Theorem \ref{thmMain}, we use an induction on colours argument similar to \cite[\S4]{CLP}.

For the remainder of this section, we let $N$ denote a positive integer and consider Brauer configurations in the interval $[N]$. It is useful to embed $[N]$ in an abelian group which is not much larger than $[N]$. We therefore let $p$ denote a prime\footnote{\,Such a prime exists by Bertrand's postulate.} satisfying $3N<p<6N$, and embed $[N]$ in the group $\Z/p\Z$ by reducing modulo $p$. We also let $M\in\N$ be a positive integer with $M>1$ so that we may consider multiplicatively $[M]$-syndetic sets.

Let $S\subseteq\Z/p\Z$ and let $f_{1},f_{2},f_{3}:\Z/p\Z\to\C$. The counting functional $\Lambda_{S}$ we use is defined by
\begin{equation*}
\Lambda_{S}(f_{1},f_{2},f_{3}):=\BigE_{x\in\Z/p\Z}\BigE_{d\in S}f_{1}(x)f_{2}(x+d)f_{3}(x+2d).
\end{equation*}
For brevity, we write $\Lambda_{S}(f):=\Lambda_{S}(f,f,f)$.

\begin{lemma}[Counting Brauer configurations with common difference in $S$]\label{lemBrauCount}$ $
	\\
	Let $N\in\N$ with $N\geqslant 3$. If $S\subseteq[N/3]$ is non-empty, then
	\begin{equation*}
	\Lambda_{S}(1_{[N]})>\frac{1}{18}.
	\end{equation*}
\end{lemma}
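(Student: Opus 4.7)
The plan is to unpack $\Lambda_S(1_{[N]})$ as an average over $d \in S$ of the count of admissible $x$, exploit $p > 3N$ to rule out wrap-around, and use $d \le N/3$ to get a uniform lower bound on that count.

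First I would write
\begin{equation*}
\Lambda_S(1_{[N]}) \;=\; \frac{1}{p|S|}\sum_{d\in S}\sum_{x\in\Z/p\Z} 1_{[N]}(x)\,1_{[N]}(x+d)\,1_{[N]}(x+2d),
\end{equation*}
and fix $d \in S$. Since $d \in [N/3]$, we have $1 \le d \le N/3$, so for any integer $x$ with $1 \le x \le N - 2d$ the three quantities $x,\ x+d,\ x+2d$ are integers in $[N]$, and in particular lie in $[N]$ after reduction modulo $p$. Conversely, if $x \in \Z/p\Z$ is represented by an integer in $\{0,1,\dots,p-1\}$ with $x,x+d,x+2d$ all lying in $[N] \subseteq [3N-1]$, then because $p > 3N$ no reduction modulo $p$ occurs among these three terms (their integer values all lie in a window of length less than $p$), forcing $1 \le x \le N-2d$.

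Thus the inner sum equals exactly $N - 2d$, and using $d \le N/3$ I obtain $N - 2d \ge N/3$. Substituting this gives
\begin{equation*}
\Lambda_S(1_{[N]}) \;\ge\; \frac{1}{p|S|}\sum_{d\in S}\frac{N}{3}\;=\;\frac{N}{3p}.
\end{equation*}
Finally, the strict inequality $p < 6N$ yields $\Lambda_S(1_{[N]}) > N/(3 \cdot 6N) = 1/18$, as required.

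There is no serious obstacle here; this is an elementary counting lemma. The only point demanding a little care is the verification that no wrap-around modulo $p$ occurs, which is why the paper chose the embedding prime $p$ to satisfy $3N < p < 6N$ in the first place — the lower bound $p > 3N$ kills wrap-around for progressions of length $3$ with terms in $[N]$, and the upper bound $p < 6N$ is what gives the explicit denominator $18$.
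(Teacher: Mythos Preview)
Your proof is correct and follows essentially the same route as the paper: both compute the inner sum over $x$ as exactly $N-2d$, use $d\leqslant N/3$ to bound this below by $N/3$, and then invoke $p<6N$ to obtain the strict lower bound $1/18$. The only difference is that you spell out the wrap-around verification (using $p>3N$) explicitly, whereas the paper simply writes $\Lambda_{S}(1_{[N]})=\tfrac{1}{p}\E_{d\in S}(N-2d)$ without comment.
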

\begin{proof}
	Since $S\subseteq[N/3]$, for all $d\in S$ we have $N-2d\geqslant N/3$. Combining this with the bound $p<6N$ gives
	\begin{equation*}
	\Lambda_{S}(1_{[N]})=\frac{1}{p}\BigE_{d\in S}(N-2d)\geqslant\frac{N}{3p}>\frac{1}{18}.
	\end{equation*}
\end{proof}

We now use the two different types of norms introduced earlier to control the size of $\Lambda_{S}$. The simplest way to bound $\Lambda_{S}$ is by using the $L^{1}$ norm.

\begin{lemma}[$L^{1}$ control for $\Lambda_{S}$]\label{lemL1DiffCon}
	Let $S\subseteq[N]$ and let $f,g:\Z/p\Z\to\C$ be $1$-bounded functions. Then we have
	\begin{equation}\label{eqnL1ctrl}
	|\Lambda_{S}(f)-\Lambda_{S}(g)|\leqslant 3\lVert f-g\rVert_{L^{1}(\Z/p\Z)}.
	\end{equation}
\end{lemma}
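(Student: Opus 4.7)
The plan is to prove this via a standard telescoping argument, writing the difference of the two trilinear counting operators as a sum of three terms, each of which is linear in $f-g$, and then bounding each term by exploiting translation invariance together with the $1$-boundedness of $f$ and $g$.

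First I would introduce the multilinear version of $\Lambda_S$, namely
\begin{equation*}
\Lambda_S(f_1,f_2,f_3):=\BigE_{x\in\Z/p\Z}\BigE_{d\in S}f_1(x)f_2(x+d)f_3(x+2d),
\end{equation*}
and telescope:
\begin{equation*}
\Lambda_S(f)-\Lambda_S(g)=\Lambda_S(f-g,f,f)+\Lambda_S(g,f-g,f)+\Lambda_S(g,g,f-g).
\end{equation*}
This reduces the task to showing that each of the three trilinear terms is bounded in absolute value by $\|f-g\|_{L^1(\Z/p\Z)}$.

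For the first term, pulling the absolute value inside and using $|f(x+d)|,|f(x+2d)|\leqslant 1$ immediately gives
\begin{equation*}
|\Lambda_S(f-g,f,f)|\leqslant\BigE_{x\in\Z/p\Z}|f(x)-g(x)|=\|f-g\|_{L^1(\Z/p\Z)}.
\end{equation*}
For the remaining two terms, the key observation is that averaging over $x\in\Z/p\Z$ is translation invariant. In the second term I would substitute $y=x+d$, so that $x=y-d$ and $x+2d=y+d$, rewriting the expression as $\mathbb{E}_{y}\mathbb{E}_{d\in S}g(y-d)(f-g)(y)f(y+d)$; bounding $|g(y-d)|,|f(y+d)|\leqslant 1$ yields the bound $\|f-g\|_{L^1(\Z/p\Z)}$. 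In the third term, I substitute $y=x+2d$ and argue analogously.

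There is no real obstacle here; the only thing to be careful about is performing the change of variables correctly inside the iterated expectation (since the inner average is over $d\in S$ rather than over $\Z/p\Z$, the change of variable takes place only in the outer $x$-average, which is invariant under translation by any fixed $d$). Summing the three contributions yields the claimed bound $3\|f-g\|_{L^1(\Z/p\Z)}$.
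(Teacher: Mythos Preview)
Your proposal is correct and follows essentially the same approach as the paper: the paper also uses the telescoping identity $\Lambda_S(f)-\Lambda_S(g)=\Lambda_S(f-g,f,f)+\Lambda_S(g,f-g,f)+\Lambda_S(g,g,f-g)$ and bounds each trilinear term by $\|f-g\|_{L^1(\Z/p\Z)}$ via a change of variables in the $x$-average and $1$-boundedness of the remaining factors. The only cosmetic difference is that the paper first states the general bound $|\Lambda_S(f_1,f_2,f_3)|\leqslant\|f_k\|_{L^1(\Z/p\Z)}$ for any $k$ with the other two functions $1$-bounded, and then applies it three times.
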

\begin{proof}
	First let $k\in\{1,2,3\}$ and let $f_{1},f_{2},f_{3}:\Z/p\Z\to\C$ be functions such that $f_{i}$ is $1$-bounded for all $i\neq k$. By a change of variables, we see that
	\begin{align*}
	|\Lambda_{S}(f_{1},f_{2},f_{3})|&\leqslant\BigE_{d\in S}\BigE_{x\in\Z/p\Z}|f_{1}(x)f_{2}(x+d)f_{3}(x+2d)|\\
	&\leqslant \BigE_{d\in S}\BigE_{x\in\Z/p\Z}|f_{k}(x+(k-1)d)1_{S}(d)|\\
	&=\left(\BigE_{y\in\Z/p\Z}|f_{k}(y)|\right)\left(\BigE_{d\in S}1_{S}(d) \right).  
	\end{align*}
	We therefore deduce that
	\begin{equation}\label{eqnL1Cont}
	|\Lambda_{S}(f_{1},f_{2},f_{3})|\leqslant \lVert f_{k}\rVert_{L^{1}(\Z/p\Z)}
	\end{equation}
	holds for all $k\in\{1,2,3\}$.
	
	Now observe that, by multilinearity, we have the telescoping identity
	\begin{equation}\label{eqnTeleId}
	\Lambda_{S}(f)-\Lambda_{S}(g)=\Lambda_{S}(f-g,f,f)+\Lambda_{S}(g,f-g,f)+\Lambda_{S}(g,g,f-g).
	\end{equation}
	Applying the triangle inequality to this identity and using (\ref{eqnL1Cont}) gives (\ref{eqnL1ctrl}).
\end{proof}

In addition to $\Lambda_{S}$, for functions $f_{1},f_{2},f_{3}:\Z/p\Z\to\C$, we introduce the auxiliary counting functional $AP_{3}$  given by
\begin{equation*}
AP_{3}(f_{1},f_{2},f_{3}):=\BigE_{x,d\in\Z/p\Z}f_{1}(x)f_{2}(x+d)f_{3}(x+2d).
\end{equation*}
Since Brauer configurations contain three term arithmetic progressions, it is perhaps unsurprising that the uniformity of $\Lambda_{S}$ is related to the uniformity of $AP_{3}$. Indeed, the original motivation for the introduction of the $U^{s}$ norms in \cite[\S3]{Gow01} was the observation that they control counting functionals for arithmetic progressions. This result is referred to in the literature as a \emph{generalised von Neumann theorem}. In the case of three term arithmetic progressions, the result is as follows.

\begin{lemma}[Generalised von Neumann theorem]\label{lemAP3bd}
	Let $f_{1},f_{2},f_{3}:\Z/p\Z\to\C$ be $1$-bounded functions. Then we have
	\begin{equation}\label{eqnAP3bd}
	|AP_{3}(f_{1},f_{2},f_{3})|\leqslant \min_{1\leqslant k\leqslant 3}\lVert f_{k}\rVert_{U^{2}(\Z/p\Z)}.
	\end{equation}
\end{lemma}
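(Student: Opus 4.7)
The proof will be the classical two-step Cauchy--Schwarz argument (a ``generalised von Neumann'' calculation). First I would record the reflection symmetry $AP_3(f_1, f_2, f_3) = AP_3(f_3, f_2, f_1)$, obtained by substituting $(x, d) \mapsto (x + 2d, -d)$, which is a bijection of $(\Z/p\Z)^2$. This reduces (\ref{eqnAP3bd}) to separately establishing $|AP_3(f_1, f_2, f_3)| \leqslant \lVert f_3\rVert_{U^2(\Z/p\Z)}$ and $|AP_3(f_1, f_2, f_3)| \leqslant \lVert f_2\rVert_{U^2(\Z/p\Z)}$ whenever the other two inputs are $1$-bounded.

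For the bound by $\lVert f_3\rVert_{U^2}$ I would apply the linear change of variables $(a, b) = (x, x + d)$, which is bijective on $(\Z/p\Z)^2$, to rewrite
$$AP_3(f_1, f_2, f_3) = \BigE_{a, b \in \Z/p\Z} f_1(a)\, f_2(b)\, f_3(2b - a).$$
Cauchy--Schwarz in $a$, together with $\lVert f_1\rVert_{L^\infty} \leqslant 1$, gives
$$|AP_3(f_1, f_2, f_3)|^2 \leqslant \BigE_{a}\biggl|\BigE_{b} f_2(b)\, f_3(2b - a)\biggr|^2.$$
Expanding the square produces a duplicate variable $b'$; setting $h = b' - b$ and then $y = 2b - a$ (both bijections for the relevant fixed variables) would decouple the contributions of $f_2$ and $f_3$ into
$$\BigE_{h} \Bigl(\BigE_{b} f_2(b)\overline{f_2(b+h)}\Bigr)\Bigl(\BigE_{y} f_3(y)\overline{f_3(y+2h)}\Bigr).$$

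A second application of Cauchy--Schwarz, this time in $h$, would bound the displayed quantity by
$$\Bigl(\BigE_h\bigl|\BigE_b \Delta_h f_2(b)\bigr|^2\Bigr)^{1/2}\Bigl(\BigE_h\bigl|\BigE_y \Delta_{2h} f_3(y)\bigr|^2\Bigr)^{1/2},$$
which, by the recursive identity (\ref{eqnRecGowDef}), equals $\lVert f_2\rVert_{U^2}^2\, \lVert f_3\rVert_{U^2}^2$; the $f_3$ factor uses that $h \mapsto 2h$ is a bijection on $\Z/p\Z$, valid because $p$ is an odd prime. Since $\lVert f_2\rVert_{U^2} \leqslant \lVert f_2\rVert_{L^\infty} \leqslant 1$ (immediate from the definition of the $U^2$ norm), the desired inequality $|AP_3(f_1, f_2, f_3)| \leqslant \lVert f_3\rVert_{U^2}$ follows after taking square roots. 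The companion bound by $\lVert f_2\rVert_{U^2}$ is proved identically, starting instead from the substitution $(a, b) = (x + d, x + 2d)$, which places $f_2$ in the outer role. The only real obstacle is bookkeeping: tracking which variable plays which role at each Cauchy--Schwarz, and verifying that each linear substitution is genuinely a bijection on $\Z/p\Z$. The oddness of $p$ (guaranteed by $p > 3N$) is essential in order that doubling not collapse the expectation, but apart from this point the argument is purely mechanical.
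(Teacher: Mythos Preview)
Your proposal is correct and is precisely the two-step Cauchy--Schwarz argument the paper invokes (it cites \cite[Lemma~11.4]{TV06} rather than writing out the details). One small efficiency: your calculation already yields $|AP_3(f_1,f_2,f_3)|\leqslant \lVert f_2\rVert_{U^2}\lVert f_3\rVert_{U^2}$, so the $k=2$ bound drops out of the same inequality and neither the reflection symmetry nor the separate ``companion'' argument is actually needed.
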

\begin{proof}
	This follows from two applications of the Cauchy-Schwarz inequality. For the full details, see \cite[Lemma 11.4]{TV06}.
\end{proof}

We now prove an analogous result for the $\Lambda_{S}$ functional.

\begin{lemma}[Generalised von Neumann theorem for $\Lambda_{S}$]\label{lemLamControl}
	Let $S\subseteq[N]$ be a non-empty set, and let $f_{1},f_{2},f_{3}:\Z/p\Z\to\C$ be $1$-bounded functions. Then
	\begin{equation*}
	|\Lambda_{S}(f_{1},f_{2},f_{3})|\leqslant \frac{p^{1/2}}{|S|^{1/2}}\min_{1\leqslant k\leqslant 3}\lVert f_{k}\rVert_{U^{3}(\Z/p\Z)}.
	\end{equation*}
\end{lemma}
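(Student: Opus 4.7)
The plan is to adapt the standard two-Cauchy--Schwarz proof of Lemma \ref{lemAP3bd} by inserting one extra Cauchy--Schwarz step to handle the restriction of the common difference $d$ to the set $S$. That extra step both promotes the controlling norm from $U^{2}$ to $U^{3}$ and generates the factor $(p/|S|)^{1/2}$.

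Writing $T_{d}:=\E_{x\in\Z/p\Z} f_{1}(x)f_{2}(x+d)f_{3}(x+2d)$, so that $\Lambda_{S}(f_{1},f_{2},f_{3}) = |S|^{-1}\sum_{d\in S} T_{d}$, I would first apply the Cauchy--Schwarz inequality to the average over $d\in S$ and then extend the sum to all of $\Z/p\Z$, obtaining
\begin{equation*}
|\Lambda_{S}(f_{1},f_{2},f_{3})|^{2} \leqslant \frac{1}{|S|}\sum_{d\in S}|T_{d}|^{2} \leqslant \frac{p}{|S|}\E_{d\in\Z/p\Z}|T_{d}|^{2}.
\end{equation*}
Expanding $|T_{d}|^{2}$ and substituting $h=x'-x$ produces the identity
\begin{equation*}
\E_{d\in\Z/p\Z}|T_{d}|^{2} = \E_{h\in\Z/p\Z}\,AP_{3}(\Delta_{h}f_{1},\Delta_{h}f_{2},\Delta_{h}f_{3}),
\end{equation*}
directly from the definition $\Delta_{h}f(y) = f(y)\overline{f(y+h)}$.

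Since the left-hand side is a nonnegative real number, I may pass to absolute values inside the $h$-average and apply Lemma \ref{lemAP3bd} pointwise in $h$ to get, for each $k\in\{1,2,3\}$,
\begin{equation*}
\E_{d}|T_{d}|^{2} \leqslant \E_{h}\lVert\Delta_{h}f_{k}\rVert_{U^{2}(\Z/p\Z)}.
\end{equation*}
Jensen's inequality applied to the convex function $x\mapsto x^{4}$, together with the recursive formula (\ref{eqnRecGowDef}) for the Gowers norms, then yields
\begin{equation*}
\E_{h}\lVert\Delta_{h}f_{k}\rVert_{U^{2}} \leqslant \bigl(\E_{h}\lVert\Delta_{h}f_{k}\rVert_{U^{2}}^{4}\bigr)^{1/4} = \lVert f_{k}\rVert_{U^{3}}^{2}.
\end{equation*}
Combining these estimates gives $|\Lambda_{S}(f_{1},f_{2},f_{3})|^{2}\leqslant (p/|S|)\lVert f_{k}\rVert_{U^{3}}^{2}$, and the claim follows on taking square roots and minimising over $k\in\{1,2,3\}$.

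The only place where real work is required is the algebraic identity in the second step: one must verify that after the substitution $h=x'-x$, the six factors appearing in the expansion of $|T_{d}|^{2}$ regroup into the clean product $\Delta_{h}f_{1}(x)\cdot\Delta_{h}f_{2}(x+d)\cdot\Delta_{h}f_{3}(x+2d)$, at which point the outer averages over $x$ and $d$ reassemble into an $AP_{3}$ count of the difference-operator functions. Once this identity is in place, the remaining manipulations are routine applications of Lemma \ref{lemAP3bd}, Jensen's inequality, and the recursion (\ref{eqnRecGowDef}).
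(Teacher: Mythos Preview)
Your argument is correct and is essentially identical to the paper's own proof: the paper also Cauchy--Schwarzes in $d$ to produce the factor $p/|S|$, expands $|T_{d}|^{2}$ with the substitution $h=x'-x$ to obtain $\E_{h}AP_{3}(\Delta_{h}f_{1},\Delta_{h}f_{2},\Delta_{h}f_{3})$, applies Lemma~\ref{lemAP3bd}, and then uses H\"older (equivalently, your Jensen step) together with (\ref{eqnRecGowDef}) to reach $\lVert f_{k}\rVert_{U^{3}}^{2}$. The only cosmetic difference is that the paper carries the indicator $1_{S}(d)$ through the Cauchy--Schwarz step explicitly, whereas you average over $S$ and then extend by positivity; these yield the same inequality.
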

\begin{proof}
	Observe that we can rewrite $\Lambda_{S}(f_{1},f_{2},f_{3})$ as
	\begin{equation*}
	\Lambda_{S}(f_{1},f_{2},f_{3})=\frac{p}{|S|}\BigE_{x,d\in\Z/p\Z}f_{1}(x)f_{2}(x+d)f_{3}(x+2d)1_{S}(d).
	\end{equation*}
	By applying the Cauchy-Schwarz inequality (with respect to the $d$ variable), we see that the quantity $|\Lambda_{S}(f_{1},f_{2},f_{3})|^{2}$ is bounded above by
	\begin{align*}
	&\left( \BigE_{d\in\Z/p\Z}\frac{p^{2}}{|S|^{2}}1_{S}(d)\right) \left(\BigE_{d\in\Z/p\Z}\left\lvert\BigE_{x,d\in\Z/p\Z}f_{1}(x)f_{2}(x+d)f_{3}(x+2d) \right\rvert^{2} \right)\\
	&=\frac{p}{|S|} \BigE_{d,x,x'\in\Z/p\Z}f_{1}(x)f_{1}(x')f_{2}(x+d)f_{2}(x'+d)f_{3}(x+2d)f_{3}(x'+2d)\\
	&=\frac{p}{|S|} \BigE_{d,h,x\in\Z/p\Z}\Delta_{h}f_{1}(x)\Delta_{h}f_{2}(x+d)\Delta_{h}f_{3}(x+2d)\\
	&=\frac{p}{|S|}\BigE_{h\in\Z/p\Z}AP_{3}(\Delta_{h}f_{1},\Delta_{h}f_{2},\Delta_{h}f_{3}).
	\end{align*}
	Note that the penultimate equality above follows from a changed of variables of the form $h=x'-x$. Now let $k\in\{1,2,3\}$. Using Lemma \ref{lemAP3bd} and (\ref{eqnRecGowDef}) along with an application of H\"{o}lder's inequality gives
	\begin{align*}
	|\Lambda_{S}(f_{1},f_{2},f_{3})|^{2} 
	&\leqslant\frac{p}{|S|}\BigE_{h\in\Z/p\Z}\lVert \Delta_{h}f_{k}\rVert_{U^{2}(\Z/p\Z)}\\
	&\leqslant\frac{p}{|S|}\left(\BigE_{h\in\Z/p\Z}1^{4/3}\right)^{3/4}\left(\BigE_{h\in\Z/p\Z}\lVert \Delta_{h}f_{k}\rVert_{U^{2}(\Z/p\Z)}^{4} \right)^{1/4} \\
	&=\frac{p}{|S|}\lVert f_{k}\rVert_{U^{3}(\Z/p\Z)}^{2}.
	\end{align*}
\end{proof}
\begin{lemma}[$U^{3}$ controls $\Lambda_{S}$]\label{lemU3DiffCon}
	Let $S\subseteq\N$ be a multiplicatively $[M]$-syndetic set, and let $f,g:\Z/p\Z\to[0,1]$. If $N\geqslant 18M^{2}$, then
	\begin{equation}\label{eqnU3ctrl}
	|\Lambda_{S\cap[N/3]}(f)-\Lambda_{S\cap[N/3]}(g)|\leqslant 18M\lVert f-g\rVert_{U^{3}(\Z/p\Z)}.
	\end{equation}
\end{lemma}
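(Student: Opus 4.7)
The plan is to combine the generalised von Neumann theorem for $\Lambda_{S}$ (Lemma \ref{lemLamControl}) with the density lower bound for multiplicatively syndetic sets (Lemma \ref{lemSynDens}), in a manner directly analogous to the $L^{1}$ control established in Lemma \ref{lemL1DiffCon}.

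First I would set $T := S \cap [N/3]$ and note that, since $f$ and $g$ take values in $[0,1]$, the difference $f-g$ is $1$-bounded on $\Z/p\Z$. Then I would apply the telescoping identity from Lemma \ref{lemL1DiffCon} (equation (\ref{eqnTeleId})), which by multilinearity gives
\begin{equation*}
\Lambda_{T}(f) - \Lambda_{T}(g) = \Lambda_{T}(f-g, f, f) + \Lambda_{T}(g, f-g, f) + \Lambda_{T}(g, g, f-g).
\end{equation*}
Applying the triangle inequality and then invoking Lemma \ref{lemLamControl} on each summand (with $f-g$ playing the role of the entry whose $U^{3}$ norm is taken) yields
\begin{equation*}
|\Lambda_{T}(f) - \Lambda_{T}(g)| \leqslant 3 \cdot \frac{p^{1/2}}{|T|^{1/2}} \, \lVert f-g\rVert_{U^{3}(\Z/p\Z)}.
\end{equation*}

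Next I would bound $p^{1/2}/|T|^{1/2}$. Since $p < 6N$, it suffices to show $|T| \geqslant N/(6M^{2})$. The assumption $N \geqslant 18M^{2}$ gives $N/(3M) \geqslant 6M \geqslant 2$, so $\lfloor N/(3M) \rfloor \geqslant N/(6M)$. Applying Lemma \ref{lemSynDens} (with $F=[M]$) to the multiplicatively $[M]$-syndetic set $S$, restricted to the interval $[N/3]$, then gives
\begin{equation*}
|T| \;=\; |S \cap [N/3]| \;\geqslant\; \frac{1}{M}\left\lfloor \frac{N}{3M}\right\rfloor \;\geqslant\; \frac{N}{6M^{2}}.
\end{equation*}
Combining $p < 6N$ with this lower bound yields $p/|T| \leqslant 36M^{2}$, hence $p^{1/2}/|T|^{1/2} \leqslant 6M$. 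Inserting this into the previous display gives the desired bound of $3 \cdot 6M = 18M$.

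There is no real obstacle here, only bookkeeping: the only subtlety is verifying that the hypothesis $N \geqslant 18M^{2}$ is precisely strong enough to clear the floor function in Lemma \ref{lemSynDens} and to guarantee the clean constant $18M$. All of the analytic content has already been done in Lemmas \ref{lemLamControl} and \ref{lemSynDens}; the present lemma simply packages them via the telescoping identity.
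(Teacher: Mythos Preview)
Your proof is correct and follows essentially the same route as the paper: telescoping identity plus Lemma~\ref{lemLamControl}, then Lemma~\ref{lemSynDens} combined with $p<6N$ to bound $p^{1/2}/|T|^{1/2}\leqslant 6M$. The only difference is that you are more explicit about handling the floor function via $N/(3M)\geqslant 2$, which the paper leaves implicit.
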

\begin{proof}
	Note that as $f$ and $g$ are non-negative, the difference $f-g$ is a $1$-bounded function. Applying the previous lemma and the triangle inequality to the telescoping identity (\ref{eqnTeleId}) then gives
	\begin{equation*}
	|\Lambda_{S\cap[N/3]}(f)-\Lambda_{S\cap[N/3]}(g)|\leqslant \frac{3p^{1/2}}{|S\cap[N/3]|^{1/2}}\lVert f-g\rVert_{U^{3}(\Z/p\Z)}.
	\end{equation*}
	Combining Lemma \ref{lemSynDens} with the assumption $N\geqslant 18M^{2}$ gives
	\begin{equation*}
	|S\cap[N/3]|\geqslant\frac{N}{6M^{2}}>\frac{p}{36M^{2}}.
	\end{equation*}
	This implies the desired bound (\ref{eqnU3ctrl}).
\end{proof}

We now proceed to prove Theorem \ref{thmSynDensInc} using a density increment strategy. This argument combines Green and Tao's quadratic Fourier analytic methods \cite{GT09} for finding sets lacking arithmetic progressions of length $4$ with the techniques used by Chow, Lindqvist, and Prendiville \cite[Lemma 7.1]{CLP} to obtain a `homogeneous' generalisation of S\'{a}rk\"{o}zy's theorem \cite{Sarkoz}.

The original density increment strategy of Roth \cite{Roth} was used to show that subsets of $[N]$ which lack arithmetic progressions of length $3$ have size $o(N)$. This method was subsequently modified by Gowers to prove an analogous result for arithmetic progressions of length 4 \cite{Gow 4AP}, and then further generalised for progressions of arbitrary length \cite{Gow01}. The argument proceeds as follows. Let $\delta_{0}>0$. Suppose $A\subseteq[N]$ lacks arithmetic progressions of length $3$ and satisfies $|A|=\alpha N$ for some $\alpha\geqslant\delta_{0}$. Then provided that $N$ is `not too small', meaning that $N>C(\delta_{0})$ for some positive constant $C(\delta_{0})$ depending only on $\delta_{0}$, we can find an arithmetic progression $P\subseteq[N]$ of length $N':=|P|\geqslant F(N,\delta_{0})$ on which $A$ has a density increment
\begin{equation}\label{eqnMethDen}
\alpha':=\frac{|A\cap P|}{|P|}\geqslant \alpha + c(\delta_{0}).
\end{equation}
Here $c(\delta_{0})>0$ is a positive constant depending only on $\delta_{0}$, and $F$ is an explicit positive function such that, for any fixed $\delta>0$, $F(N,\delta)\to\infty$ as $N\to\infty$. 

We can then apply an affine transformation of the form $x\mapsto ax+b$ to injectively map $A\cap P$ into $[N']$ with image $A'\subseteq[N']$. Since arithmetic progressions are translation-dilation invariant, we deduce that $A'$ also lacks arithmetic progressions of length $3$ and satisfies $|A'|=\alpha'N'>\delta_{0} N'$. We can then iterate this argument. Since the density is increasing by at least $c(\delta_{0})$ after each iteration, this process must eventually terminate. We can then procure an upper bound for the size of the original $N$ in terms of $C(\delta_{0}),c(\delta_{0})$ and $F(\cdot\, ,\delta_{0})$.

An important aspect of this method is that it uses the translation-dilation invariance of arithmetic progressions. However, more general configurations, such as Brauer configurations, are not translation invariant. This is emphasised by the fact that the odd numbers have density $1/2$ and yet they do not contain any Brauer configurations. A density analogue of Brauer's theorem is therefore impossible, and so this argument cannot help us prove Brauer's theorem.

The major insight of Chow, Lindqvist, and Prendiville \cite[\S7]{CLP} is that one can separate such configurations into a `translation invariant part' and a `non-translation invariant part'. For instance, observe that if $\{x,d,x+d,x+2d\}$ is a Brauer configuration, then the set $\{x+h,d,(x+d)+h,(x+2d)+h\}$ is also a Brauer configuration for any $h\in\N$. Brauer configurations therefore consist of a translation invariant part $\{x,x+d,x+2d\}$ and a non-translation invariant part $\{d\}$.

This allows us to modify the density increment strategy of Roth and Gowers to prove Brauer's theorem. Instead of studying a single set $A$ lacking Brauer configurations, we study a pair of sets $A$ and $S$ with the following properties.
\begin{enumerate}[(i)]
	\item (Density). $A\subseteq[N]$ satisfies $|A|\geqslant \delta N$.
	\item (Syndeticity). $S\subseteq\N$ is a multiplicatively $[M]$-syndetic set.
	\item (Brauer free). There does not exist an arithmetic progression of length $3$ in $A$ with common difference in $S\cap[N/3]$.
\end{enumerate}

As in the original density increment argument, we show that, provided $N$ is `not too small', we can find a long arithmetic progression $P\subseteq[N]$ on which we have a density increment of the form (\ref{eqnMethDen}). As before, we can apply an affine transformation to obtain a new set $A'\subseteq[N']$ with increased density. We can also obtain a new multiplicatively $[M]$-syndetic set $S'=d^{-1}S$, where $d$ is the common difference of the progression $P$.

Recall that the translation invariant part $\{x,x+d,x+2d\}$ of a Brauer configuration is required to come from the dense set $A$, whilst the non-translation invariant part $\{d\}$ comes from the multiplicatively syndetic set $S$. Thus we have obtained new sets $A',S'\subseteq[N']$ satisfying (i)-(iii). Iterating this procedure as in the Gowers-Roth argument allows us to prove Theorem \ref{thmSynDensInc}.

\begin{theorem}[Density increment for Brauer configurations]\label{thmDensInc}
	There exists a constant $C_{0}>1$ such that the following is true.
	Let $A\subseteq[N]$ be such that $|A|\geqslant\delta N$, for some $\delta>0$, and let $S\subseteq\N$ be a multiplicatively $[M]$-syndetic set. Suppose that there do not exist $x\in A$ and $d\in S\cap[N/3]$ such that $\{x,x+d,x+2d\}\subseteq A$. If $N$ satisfies
	\begin{equation}\label{eqnCondLrgN}
	N> \exp(C_{0}\delta^{-C_{0}}M^{C_{0}}),
	\end{equation}
	then there exists positive constants $C,c>0$ and an arithmetic progression $P$ in $[N]$ satisfying 
	\begin{equation*}
	|P|\gg N^{c\delta^{C}M^{-C}}
	\end{equation*}
	such that we have the density increment
	\begin{equation*}
	\frac{|A\cap P|}{|P|}\geqslant\left( 1+c\right)\delta.
	\end{equation*}
\end{theorem}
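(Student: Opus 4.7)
The plan is to adapt the Green--Tao density increment strategy from \cite{GT09} to the Brauer setting, with the $U^{3}$-control supplied by Lemma \ref{lemU3DiffCon} playing the role of the analogous $U^{3}$-control of four-term progressions.

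First I translate the combinatorial hypothesis into Gowers-norm non-uniformity. Retain the embedding of $[N]$ into $\Z/p\Z$ for a prime $p\in(3N,6N)$. Because $x\in[N]$ and $d\in[N/3]$ force $x+2d\leqslant 5N/3<p$, no wraparound occurs and the assumption that $A$ contains no configuration $(x,x+d,x+2d)\in A^{3}$ with $d\in S\cap[N/3]$ gives $\Lambda_{S\cap[N/3]}(1_{A})=0$. Lemma \ref{lemBrauCount} then supplies the majorant count
\[
\Lambda_{S\cap[N/3]}(\delta 1_{[N]})=\delta^{3}\Lambda_{S\cap[N/3]}(1_{[N]})>\delta^{3}/18.
\]
Since (\ref{eqnCondLrgN}) easily implies $N\geqslant 18M^{2}$, Lemma \ref{lemU3DiffCon} applied with $f=1_{A}$ and $g=\delta 1_{[N]}$ yields
\[
\bigl\|1_{A}-\delta 1_{[N]}\bigr\|_{U^{3}(\Z/p\Z)}\gg \delta^{3}/M.
\]

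Next I feed this into the quantitative $U^{3}$ inverse theorem of Green--Tao: the balanced function $1_{A}-\delta 1_{[N]}$ must correlate, at level $\gg(\delta/M)^{O(1)}$, with a degree-$2$ obstruction $\phi$ (a quadratic phase $e(\alpha x^{2}+\beta x+\gamma)$, or more precisely a locally-quadratic object of polynomial complexity in $\delta/M$). I then convert this $L^{2}$-correlation into a density increment on a long arithmetic progression by the standard quadratic factor refinement of \cite[\S3]{GT09}: using Weyl-type simultaneous Diophantine approximation of the phase parameters of $\phi$, partition $[N]$ into arithmetic progressions of common difference $q$ and length at least $N^{c(\delta/M)^{C}}$ on each of which $\phi$ is essentially constant, then pigeonhole to find a single progression $P$ on which the correlation localises. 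A routine $L^{1}$-increment-versus-$L^{1}$-decrement dichotomy then converts this into the claimed density increment $|A\cap P|/|P|\geqslant(1+c)\delta$ with $c\gg(\delta/M)^{O(1)}$.

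The main technical obstacle is the second paragraph: maintaining explicit quantitative control of the AP length $|P|$ throughout the quadratic factor refinement, since the exponent $c\delta^{C}M^{-C}$ in the length bound and the constant $C_{0}$ in the admissibility hypothesis (\ref{eqnCondLrgN}) are entirely dictated by that bookkeeping. The multiplicatively syndetic nature of $S$ enters only once, through the factor $M$ in Lemma \ref{lemU3DiffCon}; beyond that point the Green--Tao machinery transplants without conceptual modification and delivers all of the claimed quantitative bounds.
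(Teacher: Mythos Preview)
Your opening paragraph is correct and matches the paper exactly: the hypothesis forces $\Lambda_{S\cap[N/3]}(1_{A})=0$, Lemma~\ref{lemBrauCount} gives $\Lambda_{S\cap[N/3]}(\delta 1_{[N]})>\delta^{3}/18$, and so condition~(\ref{eqnCondLam}) holds. The paper then proceeds through Corollaries~\ref{cor5.7} and~\ref{cor5.8} and Theorem~\ref{BB2}.

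The gap is in your second paragraph, and you have actually flagged it yourself: you end with a density increment $(1+c)\delta$ where $c\gg(\delta/M)^{O(1)}$. But the theorem asserts that $c$ is an \emph{absolute} constant, and this is essential for the iteration in Theorem~\ref{thmSynDensInc} (where $T\ll\log(\delta^{-1})$ comes precisely from $\delta_{t}\geqslant(1+c)^{t}\delta$ with absolute $c$). Your route---a single application of the $U^{3}$ inverse theorem, followed by linearisation of the resulting quadratic phase---is the Gowers strategy, and it only ever produces an increment of Gowers type $c(\delta)\gg\delta^{C}$. The whole point of the Green--Tao refinement, which the paper imports, is to upgrade this to an absolute-constant increment.

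Concretely, the paper does not feed the $U^{3}$ lower bound into a single inverse theorem. Instead it applies the quadratic Koopman--von Neumann theorem (Theorem~\ref{BB1}, an \emph{iterated} energy increment) to approximate $1_{A}$ in $U^{3}$ by its projection $g=\E(1_{A}\mid\cB_{2}\vee\cB_{\mathrm{triv}})$ onto a quadratic factor of complexity $O(\delta^{-C}M^{C})$. One then transfers (\ref{eqnCondLam}) to $g$ via Lemma~\ref{lemU3DiffCon} (Corollary~\ref{cor5.7}), and runs an $L^{1}$ argument (Corollary~\ref{cor5.8}, using Lemma~\ref{lemL1DiffCon}) on the \emph{structured} function $g$ to find an atom $B$ of the factor on which $\E_{B}(1_{A})\geqslant(1+c)\delta$ with $c$ absolute. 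Only then is the atom linearised into arithmetic progressions via Theorem~\ref{BB2}. The dependence on $\delta$ and $M$ is pushed entirely into the complexity of the factor, and hence into the length of $P$; the increment constant stays absolute. Your sketch skips the Koopman--von Neumann step and the $L^{1}$ argument on the projected function, and without them you cannot recover the stated conclusion.
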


\begin{remark}
	The bound (\ref{eqnCondLrgN}) is needed to ensure that $N$ is not too small to satisfy the conclusion of the theorem. Moreover, by taking $C_{0}\geqslant 2$, we can assume that $N\geqslant 18M^{2}$. This allows us to make use of Lemma \ref{lemU3DiffCon}. 
\end{remark}

\begin{proof}[Proof of Theorem \ref{thmSynDensInc} given Theorem \ref{thmDensInc}]
	Let $C_{1}\geqslant C_{0}$ be a large positive parameter (which does not depend on $M$ or $\delta$) to be specified later. We use the following iteration algorithm. After the $i\,$th iteration, we have a positive integer $N_{i}\in\N$, a positive real number $\delta_{i}\geqslant\delta >0$, an infinite set $S_{i}\subseteq\N$, and a finite set $A_{i}\subseteq[N_{i}]$ satisfying the following three properties: 
	\begin{enumerate}[(I)]
		\item $|A_{i}|\geqslant\delta_{i}|N_{i}|$;
		\item $S_{i}$ is a multiplicatively $[M]$-syndetic set;
		\item there does not exist a $3$-term arithmetic progression in $A_{i}$ with common difference in $S_{i}\cap[N_{i}/3]$.
	\end{enumerate}
	We begin by defining the initial variables $
	N_{0}:=N,\,
	A_{0}:=A,\,
	S_{0}:=S,\,
	\delta_{0}:=\delta.$
	The iteration step of the algorithm proceeds as follows. If after the $i\,$th iteration we have
	\begin{equation}\label{eqnPfLargeCon}
	N_{i}\leqslant \exp(C_{1}\delta^{-C_{1}}M^{C_{1}}),
	\end{equation}
	then the algorithm terminates. If not, then we can apply Theorem \ref{thmDensInc} with $N=N_{i}$ and $f=1_{A_{i}}$ to obtain an arithmetic progression $P_{i}\subseteq[N_{i}]$ of the form
	\begin{equation*}
	P_{i}=\{a_{i},a_{i}+d_{i},\dots,a_{i}+(|P_{i}|-1)d_{i}\}
	\end{equation*}
	which satisfies the length bound
	\begin{equation}\label{eqnProgLen}
	|P_{i}|\gg N_{i}^{c\delta^{C}M^{-C}},
	\end{equation}
	and provides the density increment
	\begin{equation}\label{eqnPfDenInc}
	\frac{|A_{i}\cap P_{i}|}{|P_{i}|}\geqslant\left(1+c\right)\delta_{i}.
	\end{equation}
	Moreover, by partitioning $P_{i}$ into two shorter progressions if necessary, we can ensure that $d_{i}|P_{i}|\leqslant N_{i}$, provided that $C_{1}$ is sufficiently large. We then take
	\begin{align*}
	N_{i+1}&:=|P_{i}|;\\
	A_{i+1}&:=\{x\in[N_{i+1}]:a_{i}+(x-1)d_{i}\in A_{i}\cap P_{i}\};\\
	S_{i+1}&:=d_{i}^{-1}S_{i}=\{x\in\N: d_{i}x\in S_{i}\};\\
	\delta_{i+1}&:=\frac{|A_{i+1}|}{|N_{i+1}|}.
	\end{align*}
	We now claim that $(N_{i+1},A_{i+1},S_{i+1},\delta_{i+1})$ satisfy properties (I), (II), and (III). Property (I) follows immediately from our choice of $\delta_{i+1}$. Property (II) follows from the fact that $x\cdot [M]$ intersects $d^{-1}_{i}S_{i}$ if and only if $(d_{i}x)\cdot[M]$ intersects $S_{i}$. Finally, notice that if $A_{i+1}$ contains a $3$-term arithmetic progression with common difference $q$, then $A_{i}$ contains a $3$-term arithmetic progression with common difference $d_{i}q$. Since $d_{i}|P_{i}|\leqslant N_{i}$, we conclude that $(N_{i+1},A_{i+1},S_{i+1},\delta_{i+1})$ satisfies property (III).
	
	We have therefore shown that our algorithm may continue with the new variables $(N_{i+1},A_{i+1},S_{i+1},\delta_{i+1})$. Moreover, after applying the iteration process $t$ times, we see from (\ref{eqnPfDenInc}) that the density $\delta_{t}$ satisfies $\delta_{t}\geqslant (1+c)^{t}\delta$. Since $\delta_{t}\leqslant 1$ for all $t$, we conclude that the algorithm must terminate after $T$ steps for some $T\ll \log(\delta^{-1})$.
	
	We therefore deduce that (\ref{eqnPfLargeCon}) must hold for $i=T$. By (\ref{eqnProgLen}), we see that
	\begin{equation*}
	N_{T}\geqslant N^{O(c^{T}\delta^{CT}M^{-CT})}.
	\end{equation*}
	Combining these two bounds for $N_{T}$ gives
	\begin{equation*}
	c^{T}\delta^{CT}M^{-CT}\log N\ll C_{1}\delta^{-C_{1}}M^{C_{1}}.
	\end{equation*}
	Rearranging and taking logarithms gives
	\begin{equation*}
	\log\log N\ll \log C_{1} +T\log(c^{-1})+(C_{1}+CT)\log\left(M/\delta\right)\ll \log(\delta^{-1})\log\left(M/\delta\right).
	\end{equation*}
	We have therefore established (\ref{eqnMainNbd}), as required.
\end{proof}

\subsection{Quadratic Fourier Analysis for Brauer Configurations}

The goal of the rest of this section is to prove Theorem \ref{thmDensInc}. We achieve this by adapting the methods used by Green and Tao \cite{GT09} to study subsets of $[N]$ which lack arithmetic progressions of length 4. 

The objective of their argument is to show that if a subset $A\subseteq[N]$ of density $\alpha$ lacks arithmetic progressions of length $4$, then there exists a long arithmetic progression $P\subseteq[N]$ upon which $A$ achieves a density increment $|A\cap P|\geqslant(\alpha+c(\alpha))|P|$. Gowers' argument yields an increment of the form $c(\alpha)\gg\alpha^{C}$. The key insight of Green and Tao is that one can obtain a larger density increment if one first shows that $A$ has a density increment on a `quadratic Bohr set'. A linearisation procedure can then be applied to obtain a long arithmetic progression $P'$ which provides a density increment $c(\alpha)\gg \alpha$.

\begin{remark}
	In this subsection we make use of several results from \cite{GT09}. The statements of these theorems contain a number of technical terms from quadratic Fourier analysis. We assume the reader is familiar with the notation used in \cite[\S\S3-5]{GT09} and employ this henceforth without further explanation.
\end{remark}

The first theorem we need is \cite[Theorem 5.6]{GT09}.

\begin{theorem}[Quadratic Koopman-von Neumann theorem]\label{BB1}
	Let $\eps>0$ and let $f:\Z/p\Z\to[-1,1]$. Suppose $K\in\N$ satisfies $K\geqslant C\eps^{-C}$ for some absolute constant $C>0$. Then there exists a quadratic factor $(\cB_{1},\cB_{2})$ in $\Z/p\Z$ of complexity at most $(O(\eps^{-C}),O(\eps^{-C}))$ and resolution $K$ such that
	\begin{equation}\label{eqnQuadKoopBd}
	\lVert f-\E(f|\cB_{2}\vee\cB_{triv})\rVert_{U^{3}(\Z/p\Z)}\leqslant\eps.
	\end{equation}
\end{theorem}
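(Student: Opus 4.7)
The plan is to establish this by an energy-increment argument, which is the standard mechanism behind Koopman--von Neumann style decomposition theorems in higher-order Fourier analysis. The monotone quantity driving the iteration is the $L^{2}$-energy $E(\cB_{2}):=\lVert \E(f|\cB_{2}\vee\cB_{triv})\rVert_{L^{2}(\Z/p\Z)}^{2}$, which lies in $[0,1]$ since $f$ is $1$-bounded.

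Starting with $\cB_{2}=\cB_{triv}$, I would iterate the following step. If
\begin{equation*}
\lVert f-\E(f|\cB_{2}\vee\cB_{triv})\rVert_{U^{3}(\Z/p\Z)}>\eps,
\end{equation*}
then the desired conclusion fails for the current factor, and I must enlarge $\cB_{2}$. The crucial input here is the inverse theorem for the $U^{3}$ norm of Green and Tao \cite{GT09}: any $1$-bounded function $g$ with $\lVert g\rVert_{U^{3}}>\eps$ correlates, at level $\gg\eps^{O(1)}$, with a globally quadratic phase function $e(\psi(x))$ defined on a suitable Bohr set. Applying this to $g:=f-\E(f|\cB_{2}\vee\cB_{triv})$ produces such a phase $\psi$, which I would adjoin to the factor by declaring $\psi$ a new quadratic generator of $\cB_{2}$, while simultaneously adjoining the discrete derivatives $\psi(\cdot+h)-\psi(\cdot)$ as linear generators of $\cB_{1}$ for an appropriate set of shifts $h$, so that the refined pair remains a legitimate quadratic factor at resolution $K$.

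By orthogonality of conditional expectation (Pythagoras) combined with this correlation bound, the refinement step produces an energy increment of the form $E(\cB_{2}')-E(\cB_{2})\gg\eps^{O(1)}$. Since $E(\cB_{2})\leqslant 1$ throughout, the process must terminate after at most $O(\eps^{-O(1)})$ iterations, each enlarging the complexity of $(\cB_{1},\cB_{2})$ by at most $O(1)$; this yields the claimed complexity bound $(O(\eps^{-C}),O(\eps^{-C}))$. The resolution hypothesis $K\geqslant C\eps^{-C}$ is used at each stage to guarantee that the Bohr-set atoms of $\cB_{1}$ are fine enough that adjoining the new generators does not destroy the approximate equidistribution required to meaningfully interpret $e(\psi)$ as measurable with respect to the refined factor.

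The principal obstacle is the inverse $U^{3}$ theorem itself, which is the deep quadratic Fourier-analytic core of \cite{GT09} and is what forces the polynomial (rather than polylogarithmic) dependence of the complexity on $\eps^{-1}$. A secondary technical burden is the bookkeeping needed to verify that each refinement yields a legitimate quadratic factor in the Green--Tao sense---in particular, ensuring that the linear generators genuinely control all the discrete derivatives of the quadratic generators, and that adjoined Bohr-radii can be shrunk in a controlled way---so that the complexity increases by only $O(1)$ per iteration rather than multiplying.
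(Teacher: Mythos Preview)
The paper does not prove this theorem: it is quoted verbatim as \cite[Theorem 5.6]{GT09} and used as a black box. Your energy-increment outline is the correct mechanism behind the Green--Tao proof in \cite{GT09}, so there is nothing to compare against here beyond noting that your sketch matches the original source rather than anything in the present paper.
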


This theorem allows us to approximate (in the $U^{3}$ norm) a $1$-bounded function $f$ with a more `structured' function $g:=\E(f|\cB_{2}\vee\cB_{triv})$. In the proof of Theorem \ref{thmDensInc} we take $f=1_{A}$, where $A\subseteq[N]$ is a dense subset of $[N]$ which lacks arithmetic progressions of length $3$ with common difference in $S\cap[N/3]$, for a given multiplicatively syndetic set $S$. Our goal is to obtain a density increment on a quadratic factor for this $f$. To do this, we first show that it is sufficient to obtain a density increment with respect to the approximation $g$.

\begin{corollary}[Brauer configurations on a quadratic factor]\label{cor5.7}$ $
	\newline
	Let $f:\Z/p\Z\to[0,1]$ be a $1$-bounded non-negative function which is supported on $[N]$. Let $\delta>0$. Suppose that $N\geqslant 18M^{2}$ and
	\begin{equation}\label{eqnCondLam}
	|\Lambda_{S\cap[N/3]}(f)-\Lambda_{S\cap[N/3]}(\delta 1_{[N]})|\geqslant\delta^{3}/18.
	\end{equation}
	Then there exists a quadratic factor $(\cB_{1},\cB_{2})$ in $\Z/p\Z$ of complexity at most $(O(\delta^{-C}M^{C}),O(\delta^{-C}M^{C}))$ and resolution $O(\delta^{-C}M^{C})$ such that
	\begin{equation}\label{eqnManyBra}
	|\Lambda_{S\cap[N/3]}(g)-\Lambda_{S\cap[N/3]}(\delta 1_{[N]})|\geqslant\delta^{3}/36,
	\end{equation}
	where $g:=\E(f|\cB_{2}\vee\cB_{triv})$.
\end{corollary}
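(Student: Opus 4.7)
The plan is to combine Theorem \ref{BB1} with the generalised von Neumann bound of Lemma \ref{lemU3DiffCon}, and then transfer the hypothesised bound on $\Lambda_{S\cap[N/3]}(f)-\Lambda_{S\cap[N/3]}(\delta 1_{[N]})$ to its analogue with $g$ in place of $f$ via the triangle inequality. The whole argument is essentially a quantitative rewriting of the standard observation ``$U^3$-close functions have close three-term progression counts'', with the weight $|S|/p$ accounted for.

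More concretely, first I would fix a small parameter $\eps:=c\,\delta^{3}M^{-1}$ for a sufficiently small absolute constant $c>0$; the precise value will be determined by the need to have $18M\eps\leqslant\delta^{3}/36$. I would then apply Theorem \ref{BB1} to the $1$-bounded function $f$ with this $\eps$ and with resolution $K:=\lceil C\eps^{-C}\rceil$, to obtain a quadratic factor $(\cB_{1},\cB_{2})$ of complexity and resolution of size $O(\eps^{-C})=O(\delta^{-C'}M^{C'})$ such that the approximation $g:=\E(f\mid\cB_{2}\vee\cB_{triv})$ satisfies
\begin{equation*}
\lVert f-g\rVert_{U^{3}(\Z/p\Z)}\leqslant\eps.
\end{equation*}
Since $f$ takes values in $[0,1]$, conditional expectation preserves this range, so $g:\Z/p\Z\to[0,1]$. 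The hypothesis $N\geqslant 18M^{2}$ means Lemma \ref{lemU3DiffCon} applies to the pair $(f,g)$, giving
\begin{equation*}
|\Lambda_{S\cap[N/3]}(f)-\Lambda_{S\cap[N/3]}(g)|\leqslant 18M\lVert f-g\rVert_{U^{3}(\Z/p\Z)}\leqslant 18M\eps\leqslant\delta^{3}/36.
\end{equation*}
Combining this with the hypothesis (\ref{eqnCondLam}) via the triangle inequality gives
\begin{equation*}
|\Lambda_{S\cap[N/3]}(g)-\Lambda_{S\cap[N/3]}(\delta 1_{[N]})|\geqslant \delta^{3}/18-\delta^{3}/36=\delta^{3}/36,
\end{equation*}
which is exactly (\ref{eqnManyBra}).

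There is no real obstacle here beyond bookkeeping: once the right $\eps$ is chosen, everything follows from the two black boxes (Theorem \ref{BB1} and Lemma \ref{lemU3DiffCon}). The only point that needs a moment of care is ensuring the polynomial dependence of the factor's complexity and resolution on $\delta^{-1}M$ — since Theorem \ref{BB1} produces bounds polynomial in $\eps^{-1}$ and we take $\eps\asymp\delta^{3}/M$, the output is polynomial in $\delta^{-1}$ and $M$, as required. One should also verify that the resolution parameter appearing in Theorem \ref{BB1} is indeed freely selectable at or above $C\eps^{-C}$; choosing it equal to this lower bound yields the resolution claim of the corollary.
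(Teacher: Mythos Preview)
Your proposal is correct and follows essentially the same approach as the paper: choose $\eps$ proportional to $\delta^{3}M^{-1}$ (the paper takes $\eps=(\delta^{3}M^{-1})/648$ so that $18M\eps=\delta^{3}/36$ exactly), apply Theorem~\ref{BB1} to obtain the factor, use Lemma~\ref{lemU3DiffCon} to bound $|\Lambda_{S\cap[N/3]}(f)-\Lambda_{S\cap[N/3]}(g)|$, and conclude via the triangle inequality. Your additional remarks that $g$ inherits the range $[0,1]$ and that the resolution can be taken at its lower bound are correct and make explicit points the paper leaves implicit.
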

\begin{proof}
	Let $\eps=(\delta^{3}M^{-1})/648$. By Theorem \ref{BB1}, for some absolute constant $C>0$, we have a quadratic factor $(\cB_{1},\cB_{2})$ in $\Z/p\Z$ of complexity at most $(O(\delta^{-C}M^{C}),O(\delta^{-C}M^{C}))$ and resolution $O(\delta^{-C}M^{C})$ such that (\ref{eqnQuadKoopBd}) holds.

	From Lemma \ref{lemU3DiffCon} and (\ref{eqnQuadKoopBd}) we deduce
	\begin{equation*}
	|\Lambda_{S\cap[N/3]}(f)-\Lambda_{S\cap[N/3]}(g)|\leqslant 18M\eps=\delta^{3}/36.
	\end{equation*}
	An application of the triangle inequality to (\ref{eqnCondLam}) then gives (\ref{eqnManyBra}).
\end{proof}

We now follow the approach of Green and Tao \cite[Corollary 5.8]{GT09} by replacing $f$ with $\E(f|\cB_{2}\vee\cB_{triv})$ to obtain a density increment on a quadratic factor.

\begin{corollary}[Density increment on a quadratic Bohr set]\label{cor5.8}
	There exists a constant $\tilde{C}_{0}\geqslant 2$ such that the following is true. Let $S\subseteq\N$ be a multiplicatively $[M]$-syndetic set, and let $f:\Z/p\Z\to[0,1]$ be supported on $[N]$. Suppose $\E_{[N]}(f)\geqslant\delta$, for some $\delta>0$. Suppose further that conditions (\ref{eqnCondLrgN}) and (\ref{eqnCondLam}) both hold, for some $C_{0}\geqslant \tilde{C}_{0}$. Then there exists a quadratic factor $(\cB_{1},\cB_{2})$ in $\Z/p\Z$ of complexity at most $(O(\delta^{-C}M^{C}),O(\delta^{-C}M^{C}))$ and resolution $O(\delta^{-C}M^{C})$, and an atom $B$ of the factor $\cB_{2}\vee\cB_{triv}$ with density $\frac{|B|}{p}\gg\exp(-O(\delta^{-C}M^{C}))$ which is contained in $[N]$ and is such that
	\begin{equation}\label{eqnBexp}
	\mathbb{E}_{B}(f)\geqslant\left( 1+c\right) \delta,
	\end{equation}
	for some absolute constant $c>0$.
\end{corollary}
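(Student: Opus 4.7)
The proof adapts the density-increment strategy of Green and Tao \cite[Section 5]{GT09} to the weighted setting of $\Lambda_{S\cap[N/3]}$. I first apply Corollary \ref{cor5.7} to obtain the quadratic factor $(\cB_{1},\cB_{2})$ of the stated complexity and resolution, together with the approximant $g:=\E(f|\cB_{2}\vee\cB_{triv})$ satisfying (\ref{eqnManyBra}). Since $1_{[N]}$ is measurable with respect to $\cB_{triv}$, the function $g$ is supported on $[N]$, takes values in $[0,1]$, is constant on each atom $B$ of $\cB_{2}\vee\cB_{triv}$ with value $\E_{B}(f)$, and has integral $\E_{x\in\Z/p\Z}g(x)=\E_{x\in\Z/p\Z}f(x)\geq\delta|[N]|/p$.

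I argue by contradiction: suppose every atom $B\subseteq[N]$ of density $|B|/p\geq\rho$ satisfies $\E_{B}(f)<(1+c)\delta$, for a small absolute constant $c>0$ to be specified and for $\rho:=\exp(-C_{2}\delta^{-C_{2}}M^{C_{2}})$ with a sufficiently large constant $C_{2}$. Standard bounds on the number of atoms of a quadratic factor in terms of its complexity parameters (cf.\ \cite[Section 3]{GT09}) ensure that, for $C_{2}$ chosen large enough, the union $S_{\text{sm}}$ of atoms of density below $\rho$ has measure $|S_{\text{sm}}|/p$ much smaller than $\delta^{3}$, so its contributions to the $L^{1}$-quantities below are negligible.

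The decisive step is a trilinear decomposition via $g_{\min}:=\min(g,\delta 1_{[N]})$. Since $g,\,\delta 1_{[N]}\geq g_{\min}\geq 0$, the monotonicity of $\Lambda_{S\cap[N/3]}$ in its non-negative arguments gives $\Lambda_{S\cap[N/3]}(g),\,\Lambda_{S\cap[N/3]}(\delta 1_{[N]})\geq\Lambda_{S\cap[N/3]}(g_{\min})$, so (\ref{eqnManyBra}) forces at least one of the gaps
\[
\Lambda_{S\cap[N/3]}(g)-\Lambda_{S\cap[N/3]}(g_{\min})\quad\text{or}\quad\Lambda_{S\cap[N/3]}(\delta 1_{[N]})-\Lambda_{S\cap[N/3]}(g_{\min})
\]
to be at least $\delta^{3}/36$. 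In the first (upward) case, set $w':=(g-\delta 1_{[N]})_{+}$, so that $g=g_{\min}+w'$, and expand the gap multilinearly into seven terms, each involving at least one factor of $w'$. Bounding each term via the refinement $\Lambda_{S\cap[N/3]}(\phi_{1},\phi_{2},\phi_{3})\leq\|\phi_{1}\|_{L^{\infty}}\|\phi_{2}\|_{L^{\infty}}\|\phi_{3}\|_{L^{1}(\Z/p\Z)}$ implicit in the proof of Lemma \ref{lemL1DiffCon}, and using $\|g_{\min}\|_{L^{\infty}}\leq\delta$ together with $\|w'\|_{L^{\infty}}\leq c\delta$ on large atoms (from the no-increment hypothesis), the sum is $O(c\delta^{3})$ modulo the negligible $S_{\text{sm}}$-contribution, contradicting the lower bound $\delta^{3}/36$ for $c$ a sufficiently small absolute constant. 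In the second (downward) case, set $w:=(\delta 1_{[N]}-g)_{+}$ so that $\delta 1_{[N]}=g_{\min}+w$ and $\|w\|_{L^{\infty}}\leq\delta$; the same trilinear expansion now forces $\|w\|_{L^{1}(\Z/p\Z)}\gg\delta$, and combining this with the identity
\[
\int(g-\delta 1_{[N]})_{+}=\int w+\bigl(\textstyle\E g-\delta|[N]|/p\bigr)\geq\int w
\]
and the no-increment bound $\int(g-\delta 1_{[N]})_{+}\leq c\delta/3+|S_{\text{sm}}|/p$ again yields a contradiction for $c$ a sufficiently small absolute constant. The main obstacle is the careful bookkeeping of the $S_{\text{sm}}$ error terms: the exponent $C_{2}$ in $\rho$ must be chosen strictly larger than the complexity exponents from Corollary \ref{cor5.7} so that these cubic-order errors remain genuinely negligible compared with the absolute gain in $c$ produced by the trilinear expansion.
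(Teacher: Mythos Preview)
Your argument is correct, but it takes a longer path than the paper's own proof. Both begin by invoking Corollary \ref{cor5.7} to obtain the factor and the approximant $g=\E(f\mid\cB_{2}\vee\cB_{triv})$, and both ultimately rely on the $L^{1}$-control of $\Lambda_{S\cap[N/3]}$ (Lemma \ref{lemL1DiffCon}) together with the standard ``positive mean'' identity $\int(g-\delta 1_{[N]})_{+}=\int(g-\delta 1_{[N]})_{-}+\int(g-\delta 1_{[N]})$. The difference lies in how the comparison with $\delta 1_{[N]}$ is organised.

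The paper argues directly rather than by contradiction. It sets $\Omega:=\{x\in[N]:g(x)\geqslant(1+\eta)\delta\}$ and the single truncation $h:=g\cdot 1_{[N]\setminus\Omega}$, then applies Lemma \ref{lemL1DiffCon} twice (to $\Lambda(g)-\Lambda(h)$ and $\Lambda(h)-\Lambda(\delta 1_{[N]})$) and the triangle inequality against (\ref{eqnManyBra}) to obtain $\delta^{2}\lVert g-\delta 1_{[N]}\rVert_{L^{1}}+|\Omega|/p\gg\delta^{3}$. The positive-mean trick then converts this into $|\Omega|\gg\delta^{3}p$, after which pigeonholing over the $\exp(O(\delta^{-C}M^{C}))$ atoms yields the desired large atom. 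There is no case split and no need to isolate ``small atoms'' before the final pigeonhole step.

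Your route via $g_{\min}=\min(g,\delta 1_{[N]})$ manufactures a two-case analysis and a seven-term trilinear expansion in each case, and it forces you to excise the small-atom contribution $S_{\text{sm}}$ explicitly in order to control $\lVert w'\rVert_{L^{\infty}}$ in Case 1. This works (one may, for instance, first replace $g$ by $g\cdot 1_{[N]\setminus S_{\text{sm}}}$ at an $L^{1}$-cost of $O(|S_{\text{sm}}|/p)$ via Lemma \ref{lemL1DiffCon}, which you leave implicit), but it adds bookkeeping that the paper's single truncation avoids. The payoff of your approach is that the role of the no-increment hypothesis is made very transparent in each case; the cost is a longer argument with more moving parts.
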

\begin{proof}
	Let $(\cB_{1},\cB_{2})$ be the quadratic factor obtained from Corollary \ref{cor5.7}. Let $g=\E(f|\cB_{2}\vee\cB_{triv})$.  Note that $[N]\in\cB_{triv}$, and so $[N]\in\cB_{2}\vee\cB_{triv}$. Since $g$ is constant on atoms of $\cB_{2}\vee\cB_{triv}$ and $f$ is supported on $[N]$, we see that $g$ is also supported on $[N]$. This implies that $\mathbb{E}_{B}(f)=\mathbb{E}_{B}(g)$ holds for any $B\in\cB_{2}\vee\cB_{triv}$. Thus, it is sufficient to establish (\ref{eqnBexp}) with $g$ in place of $f$.
	
	Let $\eta>0$ be a small constant (to be chosen later) and define 
	\begin{equation*}
		\Omega:=\left\lbrace x\in[N]:g(x)\geqslant\left(1+\eta\right)\delta\right\rbrace.
	\end{equation*}
	Since $g$ is constant on atoms of $\cB_{2}\vee\cB_{triv}$, we deduce that $\Omega$ can be partitioned into atoms of $\cB_{2}\vee\cB_{triv}$. We can therefore finish the proof if we can show that one of these atoms $B$ satisfies
	\begin{equation}\label{eqnPigAtom}
		|B|\gg \exp(-O(\delta^{-C}M^{C}))p.
	\end{equation}
	Now recall that the factor $\cB_{2}\vee\cB_{triv}$ has complexity and resolution $O(\delta^{-C}M^{C})$, and so contains at most $\exp(O(\delta^{-C}M^{C}))$ atoms. Thus the pigeonhole principle implies that (\ref{eqnPigAtom}) holds if we can obtain a bound of the form
	\begin{equation}\label{eqnMuOmegaBd}
		|\Omega|\gg\delta^{3}p.
	\end{equation}
	Define the function $h:\Z/p\Z\to\C$ by
	\begin{equation*}
		h(x)=1_{[N]\setminus\Omega}(x)g(x).
	\end{equation*}
	Thus $\lVert h\rVert_{L^{\infty}(\Z/p\Z)}<\left(1+\eta\right)\delta$. Taking $\eta\leqslant 1$ and applying Lemma \ref{lemL1DiffCon} gives
	\begin{equation*}
		|\Lambda_{S\cap[N/3]}(h)-\Lambda_{S\cap[N/3]}(\delta 1_{[N]})|\leqslant 12\delta^{2}\lVert h-\delta 1_{[N]} \rVert_{L^{1}(\Z/p\Z)}.
	\end{equation*}
	From the fact that $g$ is a $1$-bounded function, we have
	\begin{equation*}
		\lVert h-\delta 1_{[N]}\rVert_{L^{1}(\Z/p\Z)}\leqslant \lVert g-\delta 1_{[N]}\rVert_{L^{1}(\Z/p\Z)}+\frac{|\Omega|}{p}.
	\end{equation*}
	Since $h$ is a $1$-bounded function, Lemma \ref{lemL1DiffCon} also gives
	\begin{equation*}
		|\Lambda_{S\cap[N/3]}(g)-\Lambda_{S\cap[N/3]}(h)|\leqslant \frac{3|\Omega|}{p}.
	\end{equation*}
	Combining these three bounds and using the triangle inequality in (\ref{eqnManyBra}) gives
	\begin{equation}\label{eqnPfDeltaMu}
		\delta^{2}\lVert g-\delta 1_{[N]}\rVert_{L^{1}(\Z/p\Z)}+\frac{|\Omega|}{p} \gg\delta^{3}.
	\end{equation}
	Recall that $\E_{[N]}(g)=\E_{[N]}(f)\geqslant\delta$. Thus,
	\begin{align*}
		\lVert g-\delta 1_{[N]}\rVert_{L^{1}(\Z/p\Z)}&\leqslant \lVert g-\delta 1_{[N]}\rVert_{L^{1}(\Z/p\Z)}+\E_{\Z/p\Z}(g-\delta 1_{[N]})\\
		&=2\lVert (g-\delta 1_{[N]})_{+}\rVert_{L^{1}(\Z/p\Z)}\\
		&\ll \frac{|\Omega|}{p}+\delta\eta.
	\end{align*}
	If $\eta$ is sufficiently small (relative to the implicit constants), we can then substitute this bound into (\ref{eqnPfDeltaMu}) to obtain the desired result (\ref{eqnMuOmegaBd}).
\end{proof}

To complete the proof of Theorem \ref{thmDensInc} it only remains to convert this density increment on a quadratic Bohr set into a density increment on an arithmetic progression. This is accomplished by implementing the following `linearisation' procedure of Green and Tao \cite[Proposition 6.2]{GT09}.

\begin{theorem}[Linearisation of quadratic Bohr sets]\label{BB2}
	Suppose $(\cB_{1},\cB_{2})$ is a quadratic factor in $\Z/p\Z$ of complexity at most $(d_{1},d_{2})$ and resolution $K$, for some $K\in\N$. Let $B_{2}$ be an atom of $\cB_{2}$. Then for all $N\in\N$, there is a partition of $B_{2}\cap[N]$ as a union of $\ll d_{2}^{\,O(d_{2})}N^{1-c/(d_{1}+1)(d_{2}+1)^{3}}$ disjoint arithmetic progressions in $\Z/p\Z$.
\end{theorem}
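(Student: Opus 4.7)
The plan is to prove the linearisation by a two-stage nested Dirichlet-type argument: first handle the linear constraints defining $\cB_{1}$, then handle the quadratic constraints that refine $\cB_{1}$ into $\cB_{2}$. Recall that an atom $B_{2}$ of $\cB_{2}$ has the shape
\begin{equation*}
B_{2}=\bigl\{x\in\Z/p\Z:\|\alpha_{i}x-t_{i}\|<1/K\;(i\leqslant d_{1}),\;\|\beta_{j}x^{2}-s_{j}\|<1/K\;(j\leqslant d_{2})\bigr\},
\end{equation*}
for linear frequencies $\alpha_{i}$, quadratic frequencies $\beta_{j}$, and fixed targets $t_{i},s_{j}\in\T$. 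The goal is to cover $B_{2}\cap[N]$ by arithmetic progressions whose common difference $d$ makes all of the phases above almost constant along the progression.

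First, I would handle the linear part. Using Dirichlet's simultaneous approximation theorem on the $d_{1}+1$ reals $\alpha_{1},\dots,\alpha_{d_{1}},1/p$, one can find a common denominator $d\ll Q$ so that $\|\alpha_{i}d\|\ll 1/Q^{1/(d_{1}+1)}$ for all $i$. Choosing $Q$ so that $Q^{1/(d_{1}+1)}\gg KL$ where $L$ is the target length, we obtain that every arithmetic progression of common difference $d$ and length $L\ll N^{1-c/(d_{1}+1)}$ has all $d_{1}$ linear phases lying in an interval of length $1/K$. After rotating targets, partitioning $[N]$ into such progressions thus refines $B_{1}\cap[N]$ into a union of APs indexed by the starting point $a$, paying only a constant loss on the exponent.

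Second, on any such progression $P=\{a+nd:0\leqslant n<L\}$, each quadratic character becomes a quadratic polynomial in $n$:
\begin{equation*}
\beta_{j}(a+nd)^{2}=\beta_{j}d^{2}\,n^{2}+2\beta_{j}ad\,n+\beta_{j}a^{2}.
\end{equation*}
I would then apply a quadratic Vinogradov/Weyl-type partitioning. Using Weyl's inequality (or its corollary on fractional parts of polynomials), one partitions $\{0,\dots,L-1\}$ into sub-APs on which $\|\beta_{j}d^{2}n^{2}\|<1/K$; this costs a factor $L^{1-c/(d_{2}+1)^{2}}$ per quadratic frequency, essentially because suppressing the $n^{2}$ coefficient requires a rational approximation to $\beta_{j}d^{2}$, whose denominator is then forced to divide the common difference of the sub-AP. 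Once the $n^{2}$ coefficient is tamed, the residual linear term $2\beta_{j}ad\,n$ is controlled by repeating the linear Dirichlet argument. Doing this \emph{simultaneously} for all $d_{2}$ quadratic frequencies (via vector-valued Dirichlet on the $d_{2}$-tuple $(\beta_{j}d^{2})_{j}$) introduces the exponent $(d_{2}+1)$ and, after combining with the two subsidiary linearisations per frequency, produces the claimed exponent $1-c/(d_{1}+1)(d_{2}+1)^{3}$. The combinatorial prefactor $d_{2}^{O(d_{2})}$ records the number of case splits in the Weyl step.

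The main obstacle will be executing the simultaneous quadratic step cleanly: unlike the linear case, where one Dirichlet application suffices, the quadratic Dirichlet/Weyl procedure must be iterated because refining the common difference $d$ to kill a quadratic frequency $\beta_{j}d^{2}$ also changes the residual linear and quadratic phases of the other $\beta_{j'}$, so the iteration must be ordered carefully and the losses in $L$ compounded without destroying the polynomial gain. The factor $(d_{2}+1)^{3}$ records precisely three sources of loss: the Weyl inequality's exponent for a degree-$2$ polynomial (one power of $(d_{2}+1)$), the passage from pointwise control to AP control (another power), and the simultaneous treatment of all $d_{2}$ frequencies via vector Dirichlet (the third).
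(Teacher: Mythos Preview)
The paper does not contain a proof of this theorem; it is quoted directly from Green and Tao \cite[Proposition~6.2]{GT09} and invoked as a black box in the proof of Theorem~\ref{thmDensInc}. There is therefore no proof in the present paper to compare your proposal against.

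For what it is worth, your outline is broadly in the spirit of the argument in \cite[\S6]{GT09}: one first finds, via simultaneous Dirichlet approximation, a common difference along which the $d_{1}$ linear phases are essentially constant, and then treats the quadratic phases on the resulting sub-progressions by a Weyl-type recurrence argument. However, what you have written is a heuristic outline rather than a proof. Your accounting for the exponent $(d_{2}+1)^{3}$ as ``three independent losses'' (Weyl exponent, pointwise-to-AP passage, vector Dirichlet) is post hoc numerology rather than a description of where that exponent actually arises in \cite{GT09}; there it comes from a single simultaneous recurrence lemma for several quadratic polynomials, of Schmidt type, applied after the linear step. Likewise, the iterative refinement you describe---in which fixing one $\beta_{j}$ perturbs the others, requiring a careful ordering---is exactly the difficulty, but you have not indicated how to control the compounding of losses beyond asserting that it works. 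If you intend to supply an actual proof rather than cite \cite{GT09}, you should consult \cite[\S6]{GT09} directly; the argument there is short but not quite the three-step procedure you sketch.
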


\begin{proof}[Proof of Theorem \ref{thmDensInc}]
	Let $\tilde{C}_{0}\geqslant 2$ be the positive constant appearing in the statement of Corollary \ref{cor5.8}. Let $A$ and $S$ be as defined in the statement of Theorem \ref{thmDensInc}. Suppose $N$ satisfies (\ref{eqnCondLrgN}), for some $C_{0}\geqslant\tilde{C}_{0}$. Let $f=1_{A}$. By Corollary \ref{cor5.8} there is a quadratic factor $(\cB_{1},\cB_{2})$ in $\Z/p\Z$ of complexity at most $(O(\delta^{-C}M^{C}),O(\delta^{-C}M^{C}))$ and resolution $O(\delta^{-C}M^{C})$, and an atom $B\subseteq[N]$ of $\cB_{2}\vee\cB_{triv}$ with density $\frac{|B|}{p}\geqslant\exp(-O(\delta^{-C}M^{C}))$ such that $E_{B}(f)\geqslant\left(1+c_{0}\right)\delta$. By Theorem \ref{BB2}, we may write $B$ as a union of $\exp(O(\delta^{-C}M^{C}))N^{1-c\delta^{C}M^{-C}}$ arithmetic progressions. By an application of the pigeonhole principle (see \cite[Lemma 6.1]{GT09}), we deduce that one of these progressions $P$ satisfies
	\begin{equation*}
	|P|\geqslant \exp(-O(\delta^{-C}M^{C}))N^{c\delta^{C}M^{-C}}
	\end{equation*}
	and
	\begin{equation*}
	\frac{|A\cap P|}{|P|}=\mathbb{E}_{P}f\geqslant \left(1+\frac{c_{0}}{2} \right)\delta.
	\end{equation*}
	Notice that the only property of the parameter $C_{0}$ appearing in (\ref{eqnCondLrgN}) that we have used is that $C_{0}\geqslant\tilde{C}_{0}$. We may therefore take $C_{0}$ to be sufficiently large so that $|P|\gg N^{c'\delta^{C'}M^{-C'}}$ holds for some absolute constants $C',c'>0$. This completes the proof.
\end{proof}

\appendix

\section{Obtaining a Tower Bound}\label{secTowerBd}

In this section, we show how Theorem \ref{thmMain} implies Theorem \ref{thmBrauerBound}. By Theorem \ref{thmMain}, there is a positive constant $\tilde{C}>0$ such that 
\begin{equation*}
\rB(r+1)\leqslant 2^{\rB(r)^{\tilde{C}\log(r+1)}}
\end{equation*}
holds for all $r\in\N$.

Given $n\in\N$ and $a_{1},\dots,a_{n}\in[2,\infty)$, define the tower function
\begin{equation*}
T_{n}(a_{1},a_{2},\dots,a_{n}):=a_{1}^{a_{2}^{\iddots^{a_{n}}}}.
\end{equation*}
Let $K\geqslant 1$ be a large positive constant, to be chosen later. We can now introduce the auxiliary function $F:\N\to\R$ given by
\begin{equation*}
F(r):=T_{r+1}(2,2,\dots,2,Kr^{2}).
\end{equation*}
Thus, we have
\begin{equation*}
F(1)=2^{K},\hspace{1em}F(2)=2^{2^{4K}},\hspace{1em}F(3)=2^{2^{2^{9K}}},\hspace{1em}F(4)=2^{2^{2^{2^{16K}}}},\hspace{1em}\dots.
\end{equation*}

Our goal is to show that, if $K$ is sufficiently large relative to $\tilde{C}$, then
\begin{equation*}
\rB(r)\leqslant F(r)
\end{equation*}
holds for all $r\in\N$. To demonstrate why this is enough to prove Theorem \ref{thmBrauerBound}, we first need to investigate the growth of tower functions.
\begin{lemma}[Towers dominate cubes]
	For all $r\geqslant 5$ we have
	\begin{equation}\label{eqnCubeTowbd}
	r^{3}\leqslant\Tow(r-1).
	\end{equation}
\end{lemma}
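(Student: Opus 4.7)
The plan is to proceed by induction on $r$, starting from the base case $r = 5$.

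For the base case, I would compute directly: $5^3 = 125$, while $\Tow(4) = 2^{\Tow(3)} = 2^{2^{2^{2}}} = 2^{16} = 65536$, so the inequality $5^3 \leq \Tow(4)$ clearly holds.

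For the inductive step, suppose $r \geq 5$ and $r^3 \leq \Tow(r-1)$. I would like to deduce $(r+1)^3 \leq \Tow(r)$. Using the recursive definition $\Tow(r) = 2^{\Tow(r-1)}$ together with the inductive hypothesis gives
\begin{equation*}
\Tow(r) = 2^{\Tow(r-1)} \geqslant 2^{r^3}.
\end{equation*}
It therefore suffices to establish the elementary inequality $(r+1)^{3} \leqslant 2^{r^{3}}$ for all $r\geqslant 5$. Taking logarithms reduces this to $3\log_{2}(r+1)\leqslant r^{3}$, which is immediate for $r\geqslant 5$ since $r^{3}\geqslant 125$ while $3\log_{2}(r+1)$ grows only logarithmically; one could simply observe $(r+1)^{3}\leqslant (2r)^{3} = 8r^{3} \leqslant 2^{r^{3}}$ because $8r^{3} \leqslant 2^{3+3\log_{2} r} \leqslant 2^{r^{3}}$ whenever $r\geqslant 5$.

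I do not anticipate any real obstacle here; the argument is a short two-line induction, and the only mild care needed is to pick the base case large enough (here $r=5$) so that both the initial inequality holds and the crude bound $(r+1)^{3}\leqslant 2^{r^{3}}$ used in the induction step is verified.
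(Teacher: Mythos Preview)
Your proof is correct and follows essentially the same approach as the paper: induction on $r$ with base case $r=5$, using an elementary inequality in the inductive step to compare $(r+1)^3$ with $2^{\Tow(r-1)}$. The only cosmetic difference is that the paper bounds $r^{3}<2(r-1)^{3}$ and then invokes $2k\leqslant 2^{k}$, whereas you use the cruder (but equally sufficient) bound $(r+1)^{3}\leqslant 2^{r^{3}}$.
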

\begin{proof}
	We first observe that (\ref{eqnCubeTowbd}) holds for $r=5$. Suppose then that $r>5$ and assume the induction hypothesis that
	\begin{equation*}
	(r-1)^{3}\leqslant\Tow(r-2).
	\end{equation*}
	Note that since $r>5$, we have
	\begin{equation*}
	\frac{r^{3}}{(r-1)^{3}}=\left(1+\frac{1}{r-1} \right)^{3}<\frac{125}{64}<2.
	\end{equation*}
	This gives
	\begin{equation*}
	r^{3}\leqslant 2(r-1)^{3}\leqslant 2\cdot\Tow(r-2).
	\end{equation*}
	Using the elementary fact that $2k\leqslant 2^{k}$ holds for all $k\in\N$, we deduce
	\begin{equation*}
	r^{3}\leqslant 2^{\Tow(r-2)}=\Tow(r-1).
	\end{equation*}
	The desired result now follows by induction.
\end{proof}

This lemma enables us to bound $F$ above by a tower function.

\begin{corollary}[Tower bound for $F$]\label{corTowF}
	For all $r\in\N$,
	\begin{equation}\label{eqnApfBd}
	F(r)\leqslant\Tow\left((1+o(1))r\right).
	\end{equation}
\end{corollary}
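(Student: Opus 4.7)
The plan is to absorb the exponent $Kr^{2}$ sitting at the top of $F(r)$ into the surrounding tower of $2$'s, using the fact that $\Tow$ grows much faster than any polynomial. To this end, I would define
\begin{equation*}
k(r) := \min\bigl\{k \in \N : \Tow(k) \geq Kr^{2}\bigr\}.
\end{equation*}
Since $T_{r+1}(2, \ldots, 2, x)$ is non-decreasing in $x$, and since stacking the sub-tower $\Tow(k(r))$ on top of $r$ further $2$'s produces exactly $\Tow(r + k(r))$, the first step of the proof will be to conclude
\begin{equation*}
F(r) \;\leq\; T_{r+1}\bigl(2, 2, \ldots, 2, \Tow(k(r))\bigr) \;=\; \Tow\bigl(r + k(r)\bigr).
\end{equation*}

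The main step will then be showing that $k(r) = o(r)$. Here I would apply the preceding lemma, i.e.\ (\ref{eqnCubeTowbd}), in the equivalent form $\Tow(s) \geq (s+1)^{3}$ for $s \geq 4$. Fixing $\eps > 0$ and taking $s = \lceil \eps r\rceil$, one obtains $\Tow(s) \geq (\eps r)^{3} \geq Kr^{2}$ as soon as $r \geq K/\eps^{3}$ and $\eps r \geq 4$. Hence $k(r) \leq \lceil \eps r\rceil$ for all sufficiently large $r$, and since $\eps > 0$ was arbitrary, this gives $k(r)/r \to 0$, i.e.\ $k(r) = o(r)$.

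Combining the two steps yields
\begin{equation*}
F(r) \;\leq\; \Tow\bigl(r + k(r)\bigr) \;=\; \Tow\bigl((1+o(1))r\bigr),
\end{equation*}
as required. The only nontrivial ingredient is the $o(r)$ bound on $k(r)$, but this drops out cleanly from the preceding lemma once one plugs in $s = \lceil \eps r\rceil$; I anticipate no real obstacle beyond keeping track of which estimates hold for which ranges of $r$, and small $r$ may be absorbed into the $o(1)$ term since $\Tow((1+o(1))r)$ dominates any bounded quantity.
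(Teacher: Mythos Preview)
Your proposal is correct and follows essentially the same approach as the paper: replace the top term $Kr^{2}$ by a small tower of $2$'s using the preceding cube-versus-tower lemma, then add the heights. The only cosmetic difference is that the paper writes down the explicit choice $k(r)=\lceil K^{1/3}r^{2/3}\rceil$ (obtained by taking cube roots in the inequality $(s+1)^{3}\leqslant\Tow(s)$), whereas you define $k(r)$ abstractly and show $k(r)=o(r)$ via an $\eps$-argument; both arrive at $F(r)\leqslant\Tow(r+k(r))$ with $k(r)=o(r)$.
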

\begin{proof}
	Recall that $F$ is an exponential tower of height $r+1$, with $K r^{2}$ as the `top' term, and with the remaining terms in the tower equal to $2$. By the previous lemma, when $r$ is sufficiently large, we have
	\begin{equation*}
	K r^{2}\leqslant\Tow\left(\lceil K^{1/3}r^{2/3}\rceil \right). 
	\end{equation*}
	By adding the heights of the towers, we deduce that
	\begin{equation*}
	F(r)\leqslant\Tow\left(r+\left\lceil K^{1/3}r^{2/3}\right\rceil\right)
	\end{equation*}
	holds for all sufficiently large $r$. This gives (\ref{eqnApfBd}).
\end{proof}

We require the following elementary result concerning manipulations of exponentials.

\begin{lemma}\label{lemLift}
	For all $a,b,k\geqslant 2$,
	\begin{equation}
	a^{b}k\leqslant a^{b+k}\leqslant a^{bk}\label{eqnLift}.
	\end{equation}
\end{lemma}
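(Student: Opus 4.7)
The claim splits into two independent inequalities, so the plan is to treat them separately.

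For the left inequality $a^{b}k \leq a^{b+k}$, I would factor the right-hand side as $a^{b}\cdot a^{k}$ and cancel $a^{b}$ (which is positive), reducing the problem to showing $k \leq a^{k}$. Since $a \geq 2$, it suffices to verify $k \leq 2^{k}$ for $k \geq 2$. This is an elementary induction: the base case $k=2$ gives $2 \leq 4$, and the step $k \mapsto k+1$ uses $2^{k+1} = 2\cdot 2^{k} \geq 2k \geq k+1$, valid for all $k \geq 1$.

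For the right inequality $a^{b+k} \leq a^{bk}$, since the base $a \geq 2 > 1$, the function $x \mapsto a^{x}$ is monotone increasing, so it is equivalent to show $b+k \leq bk$. Rewriting this as $(b-1)(k-1) \geq 1$, the hypothesis $b,k \geq 2$ makes each factor on the left at least $1$, giving the bound immediately.

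There is no real obstacle here: both steps reduce, after one routine algebraic manipulation, to a trivial monotonicity or factorisation statement, and the only quantitative input is the assumption $a,b,k \geq 2$. The only drafting care needed is to write the two reductions cleanly so the reader sees why the hypotheses on all three variables are used (the bound $a \geq 2$ is used only for the first inequality, while $b,k \geq 2$ is exactly what is needed for $(b-1)(k-1) \geq 1$ in the second).
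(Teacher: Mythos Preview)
Your proof is correct. The paper itself does not give a proof of this lemma at all, simply declaring it an ``elementary result concerning manipulations of exponentials''; your argument supplies exactly the natural details (reducing the left inequality to $k \leqslant 2^{k}$ and the right to $(b-1)(k-1)\geqslant 1$).
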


This gives the following bound on the growth of $F$.
\begin{corollary}[Tower growth of $F$]\label{corTowGrowF}
	For all $r\in\N$,
	\begin{equation*}
	F(r)^{r}\leqslant\log_{2}F(r+1).
	\end{equation*}
\end{corollary}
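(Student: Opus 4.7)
The plan is to take $\log_2$ of the desired inequality $F(r)^r \leqslant \log_2 F(r+1)$, which brings all towers down by one level and leaves both sides as towers of equal height differing only at their top entry. Writing $t_n(x) := T_n(2,\ldots,2,x)$ for the tower with $n-1$ twos capped by $x$, one has $\log_2 F(r) = t_r(Kr^2)$ and $\log_2 \log_2 F(r+1) = t_r(K(r+1)^2)$, so the claim is equivalent to
\begin{equation*}
r \cdot t_r(Kr^2) \leqslant t_r\bigl(K(r+1)^2\bigr).
\end{equation*}

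The base case $r = 1$ reduces to $K \leqslant 4K$. For $r \geqslant 2$, I would unfold one exponential layer via $t_r(x) = 2^{t_{r-1}(x)}$. Applying Lemma \ref{lemLift} with $a = 2$, $b = t_{r-1}(Kr^2)$, and $k = r$ absorbs the factor of $r$ into the exponent at the cost of an additive $r$, reducing the problem to
\begin{equation*}
t_{r-1}\bigl(K(r+1)^2\bigr) \geqslant t_{r-1}(Kr^2) + r. \qquad (\star)
\end{equation*}
For $r = 2$ this is $9K \geqslant 4K + 2$. For $r \geqslant 3$, I would unfold one further layer: since $K(r+1)^2 - Kr^2 = K(2r+1) \geqslant 3$, the strict monotonicity of $t_{r-2}$ gives $t_{r-2}(K(r+1)^2) \geqslant t_{r-2}(Kr^2) + 1$, and exponentiating yields $t_{r-1}(K(r+1)^2) \geqslant 2\, t_{r-1}(Kr^2)$. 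The surplus is $t_{r-1}(Kr^2) \geqslant Kr^2 \geqslant r$, comfortably exceeding what $(\star)$ demands.

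The only real obstacle is careful bookkeeping of tower heights and verifying the handful of small cases $r \in \{1, 2\}$ directly, since for these values the tower is too shallow for the monotonicity-then-exponentiation step to supply the needed slack. Beyond Lemma \ref{lemLift} and the evident fact that towers grow far faster than any polynomial factor in $r$, no additional tools are required.
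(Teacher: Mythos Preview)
Your argument is correct and rests on the same key tool as the paper's proof, namely Lemma \ref{lemLift}, but the organisation differs. The paper works from $\log_2 F(r+1)$ downward and applies the inequality $a^{bk}\geqslant a^{b}k$ iteratively, about $r-1$ times, to push a multiplicative factor of $r$ through every level of the tower until it emerges as $F(r)^{r}$. You instead take one further logarithm, apply Lemma \ref{lemLift} a single time to convert the outer factor $r$ into an additive $+r$ in the exponent, and then absorb that $+r$ via the one-line monotonicity observation $t_{n}(x+1)\geqslant t_{n}(x)+1$. Your route is marginally cleaner in that it avoids the explicit $r$-step iteration, at the cost of having to check the shallow cases $r\in\{1,2\}$ by hand; the paper's iterative version handles all $r\geqslant 2$ uniformly. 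One small point: the phrase ``strict monotonicity of $t_{r-2}$'' is doing slightly more work than it sounds, since you need $t_{r-2}(y)\geqslant t_{r-2}(x)+1$ rather than merely $>$, but this follows immediately by induction on the tower height and is implicit in your surplus estimate.
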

\begin{proof}
	Since $K\geqslant 1$, the case $r=1$ can be verified by inspection. Suppose then that $r\geqslant 2$. By iteratively applying Lemma \ref{lemLift}, we deduce that
	\begin{align*}
	\log_{2}F(r+1)&=T_{r+1}(2,2,\dots,2,2,K(r+1)^{2})\\
	&\geqslant T_{r}(2,2,\dots,2,2^{K r^{2}}\cdot r)\\
	&\geqslant T_{r-1}(2,2,\dots,2^{2^{K r^{2}}}\cdot r)\\
	& \vdots\\
	&\geqslant T_{2}(2,T_{r}(2,2,\dots,2,K r^{2})\cdot r).\\
	&=F(r)^{r}.
	\end{align*}
\end{proof}

We can now prove Theorem \ref{thmBrauerBound}.

\begin{proof}[Proof of Theorem \ref{thmBrauerBound}]
	By Corollary \ref{corTowF}, it is sufficient to show that
	\begin{equation}\label{eqnBrauF}
	\rB(r)\leqslant F(r)
	\end{equation}
	holds for all $r\in\N$. Since $\tilde{C}\log(n+1)=o(n)$, we can choose $n_{0}\in\N$ with $n_{0}\geqslant 5$ such that
	\begin{equation*}
	\tilde{C}\log(n+1)\leqslant n
	\end{equation*}
	holds for $n\geqslant n_{0}$. By taking $K$ sufficiently large, we can assume that (\ref{eqnBrauF}) holds for $r\leqslant n_{0}$. Suppose then that $r>n_{0}$ and assume the induction hypothesis
	\begin{equation*}
	\rB(r-1)\leqslant F(r-1).
	\end{equation*}
	By Theorem \ref{thmMain} and the fact that $r>n_{0}$, we have
	\begin{align*}
	\log_{2}\rB(r)&\leqslant \rB(r-1)^{\tilde{C}\log r }\leqslant \rB(r-1)^{r-1}.
	\end{align*}
	Now by the induction hypothesis and Corollary \ref{corTowGrowF}, we conclude that
	\begin{equation*}
	\log_{2}\rB(r)\leqslant F(r-1)^{r-1}\leqslant \log_{2}F(r).
	\end{equation*}
	This establishes the induction step and completes the proof.
\end{proof}

\end{document}